\let\@@pmod\bmod
\DeclareRobustCommand{\bmod}{\@ifstar\@pmods\@@pmod}
\def\@pmods#1{\mkern4mu({\operator@font mod}\mkern 6mu#1)}
\definecolor{blue}{rgb}{0,0,1}
\definecolor{red}{rgb}{1,0,0}
\definecolor{green}{rgb}{0,.6,.2}
\definecolor{purple}{rgb}{1,0,1}
\long\def\red#1\endred{\textcolor{red}{#1}}
\long\def\blue#1\endblue{\textcolor{blue}{#1}}
\long\def\purple#1\endpurple{\textcolor{purple}{ #1}}
\long\def\green#1\endgreen{\textcolor{green}{#1}}
\numberwithin{equation}{section}
\newcommand{\g}{\gamma}
\newcommand{\G}{\Gamma}
\newcommand{\im}{\text{Im}}
\newcommand{\br}{\text{\bf r}}
\newcommand{\De}{D^{\omega, \text{exc}}}
\newcommand{\lp}{\left (}
\newcommand{\rp}{\right )}
\newcommand{\Z}{\mathbb{Z}}
\newcommand{\N}{\mathbb{N}}
\newcommand{\Q}{\mathbb{Q}}
\newcommand{\C}{\mathbb{C}}
\newcommand{\stkout}[1]{\ifmmode\text{\red \sout{\ensuremath{#1}} \endred}\else\sout{#1}\fi}
\DeclareMathOperator{\SL}{SL}
\newcommand{\DD}{D^{\omega, \infty, \text{exc}}}
\def\ca{{\mathfrak a}}
\crefname{subsection}{subsection}{Subsection}
\DeclareMathOperator{\sgn}{{\rm sgn}} % sign
\newcommand\hol{{\mathcal{O}}}
\newcommand\indlim{{\displaystyle\lim_{\longrightarrow}\,}}
\newcommand{\sm}{\left(\begin{smallmatrix}}
\newcommand{\esm}{\end{smallmatrix}\right)}
\newcommand{\bpm}{\begin{pmatrix}}
\newcommand{\ebpm}{\end{pmatrix}}
\newtheorem{theorem}{Theorem}[section]
\newtheorem{lemma}[theorem]{Lemma}
\newtheorem{proposition}[theorem]{Proposition}
\newtheorem{corollary}[theorem]{Corollary}
\newtheorem{definition}[theorem]{Definition}
\newtheorem*{remarks}{Remarks}
\newtheorem*{examples}{Examples}
\title{Iterated integrals and cohomology}
\author[K. Bringmann]{Kathrin Bringmann}
\address{Department of Mathematics and Computer Science\\Division of Mathematics\\University of Cologne\\ Weyertal 86-90 \\ 50931 Cologne \\Germany}
\email{kbringma@math.uni-koeln.de}
\author{Nikolaos Diamantis}
\address{University of Nottingham}\email{nikolaos.diamantis@nottingham.ac.uk}
\begin{document}
\begin{abstract}
We introduce an extension of the standard cohomology which is characterised by maps that fail to be classical cocycles by products of simpler maps. The construction is motivated by the study of Manin's noncommutative modular symbols and of false theta functions. We use this construction to obtain a cohomological interpretation of important iterated integrals that arise in that study. In another direction, our approach gives modular counterparts to the long-studied relations among multiple zeta values. 
\end{abstract}
\maketitle
\vskip -8mm
\section{Introduction and statement of results}
Iterated integrals increasingly attract attention in various areas, including number theory and physics. This paper is motivated by their occurrences as Manin's iterated Mellin transforms and as certain iterated integrals that appeared in the study of false theta functions \cite{BKM,BN}. In the introduction, we consider a more restricted case to avoid technicalities. In the main body of the paper, we then deal with iterated integrals of general length, which we call \emph{depth} (Section \ref{higher}), and with both integral and half-integral weights. 

We start with integral weight. To be more precise, let $f_1, f_2\in S_k(\Gamma_0(N))$, the space of cusp forms of weight $k\in2\N$ for $\Gamma_0(N)$. For  $\gamma \in \Gamma_0(N)$, we define\footnote{This map previously appeared in the literature in other guises.} the \emph{period integral of depth $2$} by\footnote{This integral is initially only defined for $\tau$ in the lower half-plane $\mathbb H^-$ and in $\mathbb Q \cup \{i \infty\}$.}
\begin{equation*}%\label{per2}
	r_{f_1, f_2}(\gamma)(\tau):=\int_{\gamma^{-1}i\infty}^{i \infty} f_1(w_1)(w_1-\tau)^{k_1-2}\int_{w_1}^{i \infty} f_2(w_2)(w_2-\tau)^{k_2-2}dw_2 dw_1,
\end{equation*}
The coefficients of this integral may be expressed in terms of the {\it multiple $L$-series of depth $2$}
$$
	L_{f_1, f_2}\lp \frac ab; s_1, s_2 \rp:=\sum_{n_1, n_2 \ge 1}\frac{c_1(n_1) c_2(n_2)e^{2 \pi i (n_1+n_2)\frac ab}}{(n_1+n_2)^{s_1}n_2^{s_2}},
$$
where $\operatorname{Re}(s_1), \operatorname{Re}(s_2) \gg 1$, $\frac ab\in\mathbb Q$, and $c_j(n)$ denotes the $n$-th Fourier coefficient of $f_j$ (see Proposition \ref{MellLser}). We also require its ``completed'' version
$$\Lambda_{f_1, f_2}\lp \frac ab; s_1, s_2 \rp:=
\int_{\frac ab}^{i \infty} f_1(w_1) \lp w_1-\frac ab \rp ^{s_1-1} \int_{w_1}^{i \infty} f_2(w_2)(w_2-w_{1})^{s_2-1}dw_2dw_1.$$

We next turn to the case of half-integral weight. For $j \in \{1, 2\}$ let $\chi_j$ be a multiplier system of weight $k_j \in\mathbb Z+\frac12$ for $\Gamma_0(N)$ with $4|N$, and $f_j \in S_{k_j}(\Gamma_0(N), \chi_j)$ where $S_{k}(\Gamma_0(N), \chi)$ is the space of cusp forms of weight $k$ and multiplier system $\chi$. 
Define the \emph{double Eichler integral}
$$
	I_{f_1, f_2}(\tau) \!:=\! \int_{\overline\tau}^{i \infty}  f_1(w_1)(w_1-\tau)^{k_1-2}\int_{w_1}^{i \infty} f_2(w_2)(w_2-\tau)^{k_2-2}dw_2 dw_1, \ \tau \in \mathbb H^-\!:=\!\{\tau\in\C:\im(\tau)<0\}.
$$
This integral was studied in Subsection 5.1 of \cite{BKM} (up to extra $i$-powers) as a function in the upper half-plane $\mathbb H:=\{\tau\in\mathbb C:\im(\tau)>0\}$, and as a result, the branch cuts due to the factors $(w-\tau)^{k-2}$, with $k$ half-integral, played a crucial role. In \cite{BKM}, the ``obstruction of modularity" of $I_{f_1, f_2}$ was computed and some modified quantum modularity was proved \cite[Theorem 5.1]{BKM}. The double iterated integral is the double integral counterpart of an integral associated with the weight $\frac32$ unary theta functions considered in \cite[Theorem 1.5]{BN}.

In this paper, we develop a cohomological framework in which both classes of objects can be incorporated. This framework offers an interpretation for the different behaviour of the obstruction to modularity of $I_{f_1, f_2}$ in the upper and lower half-plane and allows to derive some applications for values of $L_{f_1, f_2}(\frac ab; s_1, s_2)$. The cohomological construction is described in detail in \Cref{coh}, however, intuitively, a {\it cocycle of depth $n$} can be thought of as a map that fails to be a standard cocycle by a sum of products of cocycles of depth less than $n$. If $M$ is a $\mathbb C[G]$-module, then we denote the space of $n$-cocycles (resp. coboundaries, cohomology classes) of depth $k$ by $Z^n_{(k)}(G, M)$ (resp. $B^n_{(k)}(G, M), H^n_{(k)}(G, M)$). 
Two essential differences from previous cohomological interpretations of Manin's iterated Mellin transforms and multiple $L$-series are the following:
\begin{enumerate}[leftmargin=*]
	\item In previous works, the lower-depth terms were factored out, whereas they are visible by our approach and play an active role in our applications.
	\item Most of the earlier interpretations (e.g. \cite{BC,B2,CHO,MI}) were based on (standard) noncommutative cohomology, whereas our version is commutative.
\end{enumerate}

An approach closer to ours is that of \cite{C}, however the underlying congruence group in that work was of level $1$ and, more importantly, the functions here are required to have one variable. The first main application of our approach is as follows; the precise statement is given in \Cref{BasThgen}.

\begin{theorem}\label{cohIntr} Let $k_1, k_2\in\N+\frac12$ and $f_1, f_2$ cusp forms for $\Gamma_0(N)$, of weight $k_1, k_2$, respectively. Then, for $\gamma \in \Gamma_0(N)$, the function $r_{f_1, f_2}(\gamma)$ %defined as in \eqref{per2}
has an analytic continuation to a region containing $\mathbb H^- \cup \mathbb R \cup \{i \infty\}$ suitably truncated at neighborhoods of finitely many cusps.\footnote{For an example, see the region $\Omega$ in \Cref{fig-excnbh}.} The map sending $\gamma$ to the function $r_{f_1, f_2}(\gamma)$ satisfies the following \emph{$1$-cocycle of depth $2$}:
\begin{equation*}%\label{gen1cocInt}
	r_{f_1, f_2}(\gamma_{1}\gamma_{2})- r_{f_1, f_2}(\gamma_{1})|_{4-k_1-k_2}\gamma_{2}- r_{f_1, f_2}(\gamma_{2}) =r_{f_1}(\gamma_{1})|_{k_1}\gamma_{2} \, \cdot r_{f_2}(\gamma_{2}) \quad \text{for all $\g_1, \g_2 \in \G_0(N).$}
\end{equation*}
\end{theorem}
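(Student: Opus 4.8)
The plan is to derive the depth-$2$ cocycle relation directly from the integral definition, exploiting the additivity of iterated integrals over subdivided paths of integration, exactly as one does for the classical (depth-$1$) period cocycle but keeping careful track of the inner integral's dependence on the outer variable. First I would record the depth-$1$ fact that $r_{f_j}(\gamma)(\tau) = \int_{\gamma^{-1}i\infty}^{i\infty} f_j(w)(w-\tau)^{k_j-2}\,dw$ satisfies $r_{f_j}(\gamma_1\gamma_2) = r_{f_j}(\gamma_1)|_{k_j}\gamma_2 + r_{f_j}(\gamma_2)$, which follows from splitting the path $\gamma_2^{-1}\gamma_1^{-1}i\infty \to i\infty$ at the point $\gamma_2^{-1}i\infty$ together with the $\Gamma_0(N)$-invariance of $f_j(w)\,dw$ under the weight-$k_j$ action and a change of variables $w \mapsto \gamma_2 w$. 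This identity is presumably already available from the earlier part of the paper; I would cite it (or the analogous Proposition on $\Lambda_{f_1,f_2}$, which contains the same mechanism).

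Next I would write $r_{f_1,f_2}(\gamma_1\gamma_2)(\tau)$ as the iterated integral along the path from $(\gamma_1\gamma_2)^{-1}i\infty$ to $i\infty$ and split the \emph{outer} integration variable $w_1$ at the intermediate cusp $\gamma_2^{-1}i\infty$. This gives two pieces: one with $w_1$ running over $\gamma_2^{-1}\gamma_1^{-1}i\infty \to \gamma_2^{-1}i\infty$ and one with $w_1$ running over $\gamma_2^{-1}i\infty \to i\infty$. In each piece the inner integral still runs from $w_1$ to $i\infty$. On the second piece, $w_1 \in (\gamma_2^{-1}i\infty, i\infty)$, so the inner integral $\int_{w_1}^{i\infty} f_2(w_2)(w_2-\tau)^{k_2-2}dw_2$ is exactly the inner integrand occurring in $r_{f_1,f_2}(\gamma_2)(\tau)$, and this second piece equals $r_{f_1,f_2}(\gamma_2)(\tau)$ outright. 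On the first piece I would substitute $w_1 \mapsto \gamma_2 w_1$ (and accordingly change the inner variable); using $f_1(\gamma_2 w_1)\,d(\gamma_2 w_1) = (cw_1+d)^{k_1}f_1(w_1)\,dw_1$ and the elementary identity $\gamma_2 w - \tau = (cw+d)^{-1}(c\gamma_2^{-1}\tau+d)^{-1}\cdot(\text{stuff})$ — more precisely the standard cocycle relation for $(w-\tau)^{k-2}$ under fractional linear substitution — the first piece will split into a term giving $r_{f_1,f_2}(\gamma_1)|_{4-k_1-k_2}\gamma_2$ plus a cross term.

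The cross term is the crux, and it is where the right-hand side $r_{f_1}(\gamma_1)|_{k_1}\gamma_2 \cdot r_{f_2}(\gamma_2)$ is produced. It arises because, after the substitution on the first piece, the inner integral runs from $\gamma_2 w_1$ to $i\infty$ rather than from $w_1$ to $i\infty$; writing $\int_{\gamma_2 w_1}^{i\infty} = \int_{\gamma_2 w_1}^{\gamma_2 i\infty} + \int_{\gamma_2 i\infty}^{i\infty}$ and noting $\gamma_2 i\infty = \gamma_2^{-1}i\infty$ is \emph{not} what appears — rather one uses $\int_{w_1}^{i\infty} f_2 \cdots$ transformed under $\gamma_2$ — the inner integral decomposes as (the $\gamma_2$-transform of the inner integral of $r_{f_1,f_2}(\gamma_1)$) plus (a piece in which the inner variable is decoupled from $w_1$ and runs over a fixed path depending only on $\gamma_2$). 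In that second piece the $w_1$-integration and the $w_2$-integration separate into a product, and after matching branch-cut conventions (the factors $(w-\tau)^{k-2}$ with $k\in\N+\frac12$ force a choice of branch, which is exactly why the region of analytic continuation in the statement is $\mathbb H^- \cup \mathbb R \cup \{i\infty\}$ and not $\mathbb H$) one recognizes the product $\big(\int f_1(w_1)\cdots\big)\big(\int f_2(w_2)\cdots\big)$ as $r_{f_1}(\gamma_1)|_{k_1}\gamma_2 \cdot r_{f_2}(\gamma_2)$. I expect the main obstacles to be two bookkeeping points rather than anything conceptually deep: (i) getting the weight of the slash action on the cross term and on the $r_{f_1,f_2}(\gamma_1)$ term right — the outer factor carries weight $k_1-2$ and the inner $k_2-2$, and their interaction under the substitution produces the total weight $4-k_1-k_2$ after accounting for the two automorphy factors — and (ii) justifying the interchange of the order of the path-splitting with the substitution in the presence of the half-integral-weight branch cuts, which requires knowing (from the analytic continuation asserted in the theorem, proved earlier) that all the iterated integrals in sight converge and define holomorphic functions on the relevant truncated region $\Omega$. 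Once the analytic continuation is in hand, the identity holds on a region and hence everywhere by uniqueness of analytic continuation.
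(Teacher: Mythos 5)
Your derivation of the cocycle identity is correct in substance, but it is not the route the paper takes for this particular (half-integral weight, depth $2$) statement. The paper proves the relation via \Cref{2dim} and \Cref{2dimcor}: it imports from \cite[Theorem 5.1]{BKM} the identity
$I_{f_1,f_2}|_{2-k_1,2-k_2}(\gamma-1)=-r_{f_1,f_2}(\gamma)+r_{f_1}(\gamma)r_{f_2}(\gamma)-r_{f_2}(\gamma)I_{f_1}$
for the double Eichler integral based at $\overline\tau$, then evaluates $I_{f_1,f_2}|_{2-k_1,2-k_2}(\gamma_1-1)|_{2-k_1,2-k_2}\gamma_2$ in two ways and subtracts. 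Your direct argument --- change variables by $\gamma_2$ using \eqref{modul}, split the outer path at $\gamma_2^{-1}i\infty$ and the inner path at $i\infty$, observe that the piece in which the inner variable runs over the fixed path from $i\infty$ to $\gamma_2^{-1}i\infty$ decouples from $w_1$ and factors into a product, and finish with the depth-$1$ cocycle relation for $r_{f_1}$ --- is precisely the paper's proof of the integral-weight analogue (\Cref{2dimZ}) and of the general-depth statement (\Cref{propfinn}). It works equally well here and avoids the detour through $I_{f_1,f_2}$, provided you first carry out the computation for $\tau\in\mathbb H^-$, where $w-\tau$ lies in $\mathbb H$ for all $w$ on the paths, so the principal branch of $(w-\tau)^{k_j-2}$ is single-valued and all the path manipulations are unambiguous; the identity then extends to the larger region by analytic continuation, as you say.

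The genuine gap is the first assertion of the theorem. You invoke ``the analytic continuation asserted in the theorem, proved earlier'' both to justify the interchanges and to propagate the identity beyond $\mathbb H^-$, but that continuation is itself part of what must be proved, and nothing in your proposal establishes it; as written the argument is circular on this point. The paper supplies it in the proof of \Cref{BasThgen}: passing to the projective model, $\operatorname{prj}_{(2-k_1)+(2-k_2)}(r_{f_1,f_2}(\gamma))(\tau)$ is an iterated integral whose integrand involves $\bigl(\frac{i-\tau}{w_j-\tau}\bigr)^{2-k_j}$, which is holomorphic in $\tau$ as long as the integration paths avoid the associated branch cuts; for $\tau$ in a $\{-\frac dc\}$-excised neighbourhood (where $\gamma^{-1}i\infty=-\frac dc$) such paths can be chosen inside the excised box $V_{-\frac dc}(a,\varepsilon)$, independently of the choice, and one further checks the asymptotic expansion \eqref{asym} at the singular point so that $r_{f_1,f_2}(\gamma)$ lands in the $\Gamma_0(N)$-stable space $D^{\omega,\infty,\text{exc}}_{2-k_1}\otimes D^{\omega,\infty,\text{exc}}_{2-k_2}$ --- which is also what makes the slash action $|_{4-k_1-k_2}\gamma_2$ on these continued functions meaningful. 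Without this step your proof establishes the identity only for $\tau\in\mathbb H^-$ and does not address the continuation claim at all.
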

%The proposition will be stated and proved in full and more precisely in Theorem

The second application pertains to integer weight forms and is analogous to relations of multiple zeta values in terms of ``lower order" multiple zeta values (see Section 3 of \cite{Za} for a brief overview). For depth $2$ it can be stated as follows; the proof is given in \Cref{depth2}.
\begin{theorem}\label{lincomb} For $\gamma_1, \gamma_2 \!\in\! \Gamma$, there exist $k_1\!+\!k_2\!-\!3$ non-trivial $\mathbb Q$-linear combinations of elements of $\{\Lambda_{f_1, f_2}\hspace{-0.1cm}\left((\gamma_1 \gamma_2)^{-1}i \infty, n, m\right)\hspace{-0.1cm}, \Lambda_{f_1, f_2}\hspace{-0.1cm}\left(\gamma_1^{-1}i \infty, n, m\right)\hspace{-0.1cm},  \Lambda_{f_1, f_2}\hspace{-0.1cm}\left(\gamma_2^{-1}i \infty, n, m\right)\hspace{-0.1cm}: n \in [1, k_1-1], m \in\ [1,  k_1+k_2\!-\!3-n] \}$, each of which is a $\mathbb Q$-linear combination of $\Lambda_{f_1}(\gamma_1^{-1}i \infty, n) \Lambda_{f_2}(\gamma_2^{-1}i \infty, m)$ ($1 \le n \le k_1-1$, $1 \le m \le k_2-1$).
\end{theorem}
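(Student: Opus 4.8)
The plan is to read off all $k_1+k_2-3$ relations at once from the depth-$2$ cocycle identity for the integral-weight period polynomials --- the integral-weight analogue of \Cref{cohIntr}, which I take as available from \Cref{depth2}:
\[
  r_{f_1,f_2}(\gamma_1\gamma_2) - r_{f_1,f_2}(\gamma_1)|_{4-k_1-k_2}\gamma_2 - r_{f_1,f_2}(\gamma_2) = r_{f_1}(\gamma_1)|_{k_1}\gamma_2 \cdot r_{f_2}(\gamma_2).
\]
Since $k_1,k_2\in2\N$, each side is a polynomial in $\tau$: the two depth-$2$ period polynomials and the slash of one of them have degree at most $k_1+k_2-4$, while $r_{f_1}(\gamma_1)|_{k_1}\gamma_2$ has degree at most $k_1-2$ and $r_{f_2}(\gamma_2)$ degree at most $k_2-2$, so their product again has degree at most $k_1+k_2-4$. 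Thus this is an identity between polynomials in $\tau$ of degree at most $k_1+k_2-4$, and equating its $k_1+k_2-3$ coefficients, once both sides have been written in terms of completed $L$-values, will produce the claimed relations.

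For the rewriting, recall that with $\alpha:=\gamma^{-1}i\infty$ the expansion $(w-\tau)^{k-2}=\sum_j\binom{k-2}{j}(w-\alpha)^j(\alpha-\tau)^{k-2-j}$ gives $r_f(\gamma)(\tau)=\sum_{j=0}^{k-2}\binom{k-2}{j}\Lambda_f(\alpha,j+1)(\alpha-\tau)^{k-2-j}$; expanding $(w_1-\tau)^{k_1-2}(w_2-\tau)^{k_2-2}$ in the same way --- via $w_1-\tau=(w_1-\alpha)+(\alpha-\tau)$ and $w_2-\tau=(w_2-w_1)+(w_1-\tau)$ --- and recognising the inner double integrals that appear as values $\Lambda_{f_1,f_2}(\alpha,n,m)$ writes $r_{f_1,f_2}(\gamma)(\tau)$ as an explicit $\Q$-linear combination, with binomial coefficients, of terms $(\alpha-\tau)^t\,\Lambda_{f_1,f_2}(\alpha,n,m)$ with $n,m$ in explicit finite ranges (to be matched, if one insists on the exact box of the statement, by elementary rearrangements of the $\Lambda_{f_1,f_2}$-values). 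Applying the slash operators is then formal: since $r_{f_1,f_2}(\gamma_1)$ is a polynomial in its argument and $\gamma_2\in\SL_2(\Z)$, one checks directly that $r_{f_1,f_2}(\gamma_1)|_{4-k_1-k_2}\gamma_2$ is again a polynomial of degree at most $k_1+k_2-4$ in $\tau$ all of whose coefficients lie in the $\Q$-span of the values $\Lambda_{f_1,f_2}(\gamma_1^{-1}i\infty,n,m)$ (the base cusp is unchanged, only the polynomial is re-expanded), and similarly that the $\tau$-coefficients of $r_{f_1}(\gamma_1)|_{k_1}\gamma_2$ lie in the $\Q$-span of the $\Lambda_{f_1}(\gamma_1^{-1}i\infty,n)$ with $1\le n\le k_1-1$, and those of $r_{f_2}(\gamma_2)$ in the $\Q$-span of the $\Lambda_{f_2}(\gamma_2^{-1}i\infty,m)$ with $1\le m\le k_2-1$; hence the $\tau$-coefficients of the product $r_{f_1}(\gamma_1)|_{k_1}\gamma_2\cdot r_{f_2}(\gamma_2)$ lie in the $\Q$-span of the products $\Lambda_{f_1}(\gamma_1^{-1}i\infty,n)\Lambda_{f_2}(\gamma_2^{-1}i\infty,m)$.

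Comparing the coefficient of $\tau^t$, $t=0,\dots,k_1+k_2-4$, in the cocycle identity now yields $k_1+k_2-3$ equations, each displaying a $\Q$-linear combination of the values $\Lambda_{f_1,f_2}$ at the three cusps $(\gamma_1\gamma_2)^{-1}i\infty$, $\gamma_1^{-1}i\infty$, $\gamma_2^{-1}i\infty$ as a $\Q$-linear combination of the products $\Lambda_{f_1}(\gamma_1^{-1}i\infty,n)\Lambda_{f_2}(\gamma_2^{-1}i\infty,m)$; this is the assertion, provided each such left-hand combination is \emph{non-trivial}. Checking this is the step I expect to be the main obstacle: one must show that the polynomial $r_{f_1,f_2}(\gamma_1\gamma_2)-r_{f_1,f_2}(\gamma_1)|_{4-k_1-k_2}\gamma_2-r_{f_1,f_2}(\gamma_2)$, viewed as having coefficients in the free $\Q$-vector space on the symbols $\Lambda_{f_1,f_2}(\cdot,n,m)$, has all $k_1+k_2-3$ of its $\tau$-coefficients nonzero. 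As the three summands contribute symbols attached to the three cusps, which are distinct for generic $\gamma_1,\gamma_2$, these contributions cannot cancel; so it suffices to exhibit, in each $\tau^t$-coefficient of $r_{f_1,f_2}(\gamma_1\gamma_2)$ alone, one symbol with nonzero rational coefficient, which follows from the triangular structure of the $((\gamma_1\gamma_2)^{-1}i\infty-\tau)$-expansion of $r_{f_1,f_2}(\gamma_1\gamma_2)$ --- each $\Lambda_{f_1,f_2}$-value occurring in just one of its coefficients, with a positive rational weight --- upon passing back to powers of $\tau$. The remaining bookkeeping, together with the degenerate cases (two of the three cusps coinciding, or one of $\gamma_1,\gamma_2$ fixing $i\infty$, in which case a period polynomial vanishes and the relation simply has fewer terms), is the only genuinely delicate point.
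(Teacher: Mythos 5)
Your proposal follows essentially the same route as the paper: it combines the depth-$2$ cocycle identity of Theorem \ref{2dimZ} with the expansion of $r_{f_1,f_2}(\gamma)$ in completed multiple $L$-values (Proposition \ref{MellLser}\,(2)) and compares the $k_1+k_2-3$ polynomial coefficients of the two sides. Your additional discussion of non-triviality addresses a point the paper's proof leaves implicit, and your sketch of it (distinct cusps for generic $\gamma_1,\gamma_2$ plus the triangular structure of the $(\gamma^{-1}i\infty-\tau)$-expansion) is a reasonable way to close it.
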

The paper is organised as follows. In \Cref{cohom}, we discuss the preliminaries that are used in the sequel. In particular, we recall the basics of group cohomology, of the Eichler--Shimura--Manin theory and we summarise a construction of coefficient modules from \cite{BCD} on which our cohomological interpretation is based. In \Cref{II}, we introduce the ``master'' iterated integral that is then specialised. We also define a generalisation of multiple $L$-series twisted by additive characters. The behaviour of our iterated integrals in the special case of depth $2$ and integral weight is the focus of Section \ref{integral}, where applications to values of multiple $L$-series with additive twists are obtained. In \Cref{hiw}, we study the case of half-integral weight starting with a unified framework for the previous work on depth $1$ integrals \cite{BKM,BN}, and then extending it to depth $2$. In the final section, we address the case of general depth in both the integral and half-integral cases. We first determine the analogue of the cocycle condition for iterated integrals (\Cref{propfinn}) and formalise it into a generalisation of the standard cohomology (\Cref{mainDef}). In Proposition \ref{cdim1}, we show that, for a $\Gamma_0(N)$-module $M$, we have
$H_{(k)}^n(\Gamma_0(N), M)=0$, if $n \ge 2$.
\section*{Acknowledgements}
The first author has received funding from the European Research Council (ERC) under the European Union’s Horizon 2020 research and innovation programme (grant agreement No. 101001179).
\section{Preliminaries}\label{cohom}
We first recall the cohomological formalism, mainly from \cite{Br}, that is used below and some preliminaries mainly on notation. We next describe a space which plays the role of coefficient module in our cohomological interpretation leading up to Theorem \ref{cohIntr}. This space was constructed in \cite{BCD} and is based on work by Bruggeman--Lewis--Zagier \cite{BLZ1, BLZ2} on the cohomology of Maass wave forms.

\subsection{Preliminaries on group cohomology and actions of subgroups of the modular group.}
Let $G$ be a group and let $M, L$ be two right $\mathbb C [G]$-modules. We denote the action by ``$\circ$". For $n \in \mathbb N_0$, we consider {\it the space of $n$-cochains for $G$ with coefficients in $M$}
$$
	C^n(G, M):=\{\sigma: G^n \to M\}.
$$
By convention, we set
$C^n(G, M):=0$ for $n<0.$
We also use the formalism of ``bar resolution" for the differential $d^n\colon C^n(G, M) \to
C^{n+1}(G, M)$
\begin{multline}
d^n \sigma(g_1, \dots, g_{n+1}):=\sigma(g_2,\dots,g_{n+1}) \circ g_1
\\
+\sum_{j=1}^n (-1)^j \sigma(g_1,\dots, g_{j+1}g_j, \dots, g_{n+1})
+(-1)^{n+1}\sigma(g_1,\dots,g_{n}),\label{differential}
\end{multline}
where the notation $\sigma(g_1,\dots, g_{j+1}g_j, \dots, g_{n+1})$ means that the $j$-th component of $(g_1,\ldots,g_{n+1})$ is replaced by $g_{j+1}g_j$.
We define the groups $Z^n(G, M)$, $B^n(G, M)$, and $H^n(G, M)$ as usual
$$
	Z^n(G, M):=\operatorname{ker}\left(d^n\right), \quad B^n(G, M):=\operatorname{im}\left(d^{n-1}\right), \qquad H^n(G, M):=Z^n(G, M)/B^n(G, M).
$$
If $n=1$, then $r\in Z^1(G, M)$ is characterised by the $1$-cocycle condition
\begin{equation}\label{1cocy}
	r(\gamma_2 \gamma_1)=r(\gamma_2) \circ \gamma_1+r(\gamma_1), \qquad \text{for $\gamma_1, \gamma_2 \in G$}.
\end{equation}

We next recall the cup product construction (see, e.g., \cite[Subsection V.3]{Br}). First, we let $G$ act on $M \otimes L$ by diagonal action. To avoid overburdening the notation we use ``$\circ$'' for all actions.
We consider the {\it cup product map} $\cup\colon C^m(G, M) \otimes C^n(G, L) \to C^{m+n}(G, M \otimes L)$, defined by
\begin{equation*}%\label{cup}
	\left ( \phi_1 \cup \phi_2 \right )(g_1, g_2, \dots, g_{m+n}):=(-1)^{mn}  \phi_1(g_{n+1}, \dots, g_{n+m}) \circ g_n \dots g_1\, \otimes \phi_2(g_1, \dots, g_n)
\end{equation*}
and the map
\begin{equation}\label{mu}\mu: C^*(G, M) \otimes C^*(G, L) \to C^*(G, M \otimes L)\end{equation}
induced by $\mu(\sigma_1 \otimes \sigma_2)=\sigma_1 \cup \sigma_2$.

We next recall a specific action which we use in the sequel. Let $k \in \frac12\mathbb Z$ and let $\Gamma$ be a subgroup of $\SL_2(\mathbb R)$.
Suppose that $\chi$ is a multiplier system of weight $k$ for $\Gamma$. Let $g$ be a function defined on a subset $C$ of $\mathbb C$ which is closed under the action of $\Gamma$. Then, for $\gamma=\big(\begin{smallmatrix}a&b\\c&d\end{smallmatrix}\big) \in \Gamma$ and $\tau \in C$, we have
\begin{equation*}
	g|_{k, \chi} \gamma(\tau):=\chi^{-1}(\gamma)j(\gamma, \tau)^{-k} g(\gamma \tau), \quad \text{where $j(\gamma, \tau):=c \tau+d$}.
\end{equation*}
%Here and throughout the branch of the implicit logarithm is the principal branch.
This action is extended to $\mathbb C[\Gamma]$ by linearity.
We let $S_k(\Gamma, \chi)$ (resp. $M_k(\Gamma, \chi)$) denote the space of cusp (resp. modular) forms of weight $k$ and multiplier system $\chi$ for $\Gamma.$ If $\chi$ is the trivial character, then we simplify the notation to $|_k$,  $S_k(\Gamma)$, and $M_k(\Gamma)$, respectively.

An identity we use below is the following. As above, let $\chi$ be a multiplier system of weight $k \in \frac12 \mathbb Z$ for $\Gamma$, Then for $f\in S_k(\Gamma,\chi)$ we have
%As in the proof of Theorem 5.1 of \cite{BKM}, we have
	\begin{equation}\label{modul}\frac{\chi^{-1}(\gamma) f(\gamma w)}{(\gamma w-\gamma\tau)^{2-k}j(\gamma, \tau)^{2-k}}d(\gamma w)=
	\frac{f(w)}{(w-\tau)^{2-k}}dw.
	\end{equation}

	Finally, we fix the notation $\tau=\tau_1+i\tau_2$ ($\tau_1, \tau_2 \in \mathbb R$) and we note that, throughout, the branch of the implicit logarithm is the principal branch, so that $-\pi< \text{Arg}(\tau) \le \pi$.

\subsection{Eichler--Shimura--Manin theory}
We recall some basics of the Eichler--Shimura--Manin theory which we need; for more details see, e.g. Subsection 11.8--11.11 of \cite{CS}. Let $k \in 2 \mathbb N$ and
$\Gamma\in\{\Gamma_0(N),\langle \Gamma_0(N),W_N \rangle\}$, where $W_N:=\begin{psmallmatrix}
	0 & -1\\
	N & 0
\end{psmallmatrix}$ is the {\it Fricke involution}. We consider the space $\mathbb C_{k-2}[\tau]$ of polynomials of degree at most $k-2$ viewed as a $\Gamma$-module under the action $|_{2-k}$. Furthermore let $\varphi$ be the map sending $(\overline f, g) \in \overline{S_k(\Gamma)} \oplus M_k(\Gamma)$ to the assignment
\begin{equation}\label{DefrPrel}
	\gamma \mapsto \int_{\gamma^{-1}i\infty}^{i \infty} \overline{f(w)}(\overline w-\tau)^{k-2}d \overline w+\int_{\gamma^{-1}i}^{i } g(w)(w-\tau)^{k-2}dw.
\end{equation}
The map $\varphi$ induces the \emph{Eichler--Shimura isomorphism} (see e.g., \cite[Theorem 11.8.5]{CS})
$$
	\overline{S_k(\Gamma)} \oplus M_k(\Gamma) \cong H^1(\Gamma, \mathbb C_{k-2}[\tau]).
$$
An application of this is Manin's Periods Theorem, a generalised version of which is the following:
\begin{proposition}
	\label{Razar} {\rm(\cite[Theorem 4]{R})} Let $f \in S_k(\Gamma_0(N))$ be a normalised Hecke eigenform and $K_f$ be the subfield of $\mathbb C$ generated by the Fourier coefficients of $f$. Then there are $\omega_1(f), \omega_2(f) \in \mathbb C$ with
	$$
		\Lambda_f\left(\gamma^{-1}\infty, n\right) \in K\omega_1(f)+K \omega_2(f) \qquad \text{for all $n\in\{1, \dots k-2\}$ and all $\gamma \in \Gamma_0(N)$.}
	$$
\end{proposition}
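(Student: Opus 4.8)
The plan is to reduce the assertion to the rational structure of the Eichler--Shimura cohomology together with the Hecke eigenspace decomposition, essentially transcribing Manin's original argument for the periods theorem into the modular-symbol (equivalently, cohomological) language. First I would repackage the values as pairings of a single cohomology class. For $f\in S_k(\Gamma_0(N))$ set
\[
\rho_f(\gamma)(\tau):=\int_{\gamma^{-1}i\infty}^{i\infty} f(w)(w-\tau)^{k-2}\,dw .
\]
Using the identity \eqref{modul} one checks that $\gamma\mapsto\rho_f(\gamma)$ is a $1$-cocycle for the $|_{2-k}$-action on $\C_{k-2}[\tau]$ in the sense of \eqref{1cocy}, and that it differs from $\varphi(0,f)$ of \eqref{DefrPrel} by the explicit coboundary $d^0Q$ with $Q(\tau):=\int_i^{i\infty}f(w)(w-\tau)^{k-2}dw$; hence $[\rho_f]=\varphi(0,f)\in H^1(\Gamma_0(N),\C_{k-2}[\tau])$. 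Expanding $(w-\tau)^{k-2}$ binomially shows that the $\Q$-span of the coefficients of the polynomial $\rho_f(\gamma)(\tau)$ equals the $\Q$-span of the numbers $m_j(\gamma):=\int_{\gamma^{-1}i\infty}^{i\infty}f(w)w^j\,dw$ ($0\le j\le k-2$); since $\gamma^{-1}i\infty\in\Q\cup\{i\infty\}$, each $\Lambda_f(\gamma^{-1}i\infty,n)=\int_{\gamma^{-1}i\infty}^{i\infty}f(w)(w-\gamma^{-1}i\infty)^{n-1}\,dw$ is a $\Q$-linear combination of the $m_j(\gamma)$ (and vanishes when $\gamma^{-1}i\infty=i\infty$). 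Equivalently, $\Lambda_f(\gamma^{-1}i\infty,n)$ is the value of the period pairing of $[\rho_f]$ against the $\Q$-rational modular symbol $\{\gamma^{-1}i\infty,i\infty\}\otimes(X-\gamma^{-1}i\infty)^{n-1}$, so it suffices to prove the containment for $\langle\langle f,x\rangle\rangle$ for every $\Q$-rational modular symbol $x$ with polynomial part of degree $\le k-2$.

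Next I would bring in the $K_f$-rational Hecke structure, invoking the Eichler--Shimura--Manin theory recalled above (cf. Subsection 11.8--11.11 of \cite{CS}). The space $\mathbb M$ of modular symbols with coefficients in $\Q_{k-2}[X]$ is a finite-dimensional $\Q$-vector space carrying a $\Q$-rational action of the Hecke algebra; $f$ defines, via $\langle\langle f,\cdot\rangle\rangle$, precisely the class $\varphi(0,f)\in\mathbb M\otimes_\Q\C$, and the pairing is Hecke-equivariant. Since the eigenvalues $a_n(f)$ lie in $K_f$, the $f$-isotypic part $\mathbb M[f]$ is defined over $K_f$, and by multiplicity one (for a newform; the general normalised eigenform case follows from it using the $\Q$-rational degeneracy maps) it is $2$-dimensional over $K_f$. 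The involution $\iota$ induced by $\begin{psmallmatrix}-1&0\\0&1\end{psmallmatrix}$ is $\Q$-rational and commutes with the Hecke action, so $\mathbb M[f]$ splits over $K_f$ into one-dimensional $\pm$-eigenspaces; I would fix $K_f$-generators $\xi^+,\xi^-$ and set $\omega_1(f):=\langle\langle f,\xi^+\rangle\rangle$ and $\omega_2(f):=\langle\langle f,\xi^-\rangle\rangle$.

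To conclude, one uses that $\varphi(0,f)$ is supported on the $f$-eigensystem and that distinct Hecke eigensystems are orthogonal under the period pairing (the adjoint Hecke operators agree with the $T_n$ on the $f$-part because the nebentypus is trivial). Hence $\langle\langle f,x\rangle\rangle=\langle\langle f,\pi_f(x)\rangle\rangle$ for every $\Q$-rational $x$, where the projector $\pi_f\colon\mathbb M\otimes K_f\to\mathbb M[f]$ is defined over $K_f$; writing $\pi_f(x)=a^+\xi^++a^-\xi^-$ with $a^\pm\in K_f$ gives $\langle\langle f,x\rangle\rangle=a^+\omega_1(f)+a^-\omega_2(f)\in K_f\omega_1(f)+K_f\omega_2(f)$, and specialising to $x=\{\gamma^{-1}i\infty,i\infty\}\otimes(X-\gamma^{-1}i\infty)^{n-1}$ yields the statement for all $\gamma$ and all $n\in\{1,\dots,k-2\}$.

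The main obstacle is the middle step: carefully setting up the $\Q$-rational dictionary relating the period cocycle $\rho_f$, the modular-symbol space $\mathbb M$, and the Hecke action, checking Hecke-equivariance (and self-adjointness up to the adjoint Hecke operators) of the period pairing, and proving that $\mathbb M[f]$ is two-dimensional and $K_f$-rational --- this is exactly where the newform hypothesis and multiplicity one are used, and where it is cleanest to quote the Eichler--Shimura--Manin machinery rather than reprove it. A more computational alternative mirrors Manin's original approach directly: translate $T_p[\rho_f]=a_p(f)[\rho_f]$ into explicit $\Z$-linear relations among the $m_j(\gamma)$ with $\gamma$ ranging over Hecke-coset translates, note that these relations cut out a $2$-dimensional $\C$-subspace of the space of all period vectors whose defining equations have coefficients in $K_f$, and deduce that this subspace is spanned by two vectors all of whose entries are $K_f$-multiples of two fixed complex periods $\omega_1(f),\omega_2(f)$.
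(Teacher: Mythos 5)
The paper does not actually prove this proposition: it is quoted as \cite[Theorem 4]{R} (a level-$N$ generalisation of Manin's Periods Theorem), so there is no internal argument to compare yours against. Judged on its own terms, your sketch is a correct outline of the standard proof of the cited result. The reduction of $\Lambda_f\left(\gamma^{-1}i\infty,n\right)$ to the $\Q$-span of the numbers $m_j(\gamma)=\int_{\gamma^{-1}i\infty}^{i\infty}f(w)w^j\,dw$, and hence to the period pairing of the class $[\rho_f]=\varphi(0,f)$ against $\Q$-rational modular symbols, is right (including the degenerate case $\gamma^{-1}i\infty=i\infty$, where the value is $0$). The middle and final steps --- $K_f$-rationality of the Hecke action and of the projector $\pi_f$, two-dimensionality of the $f$-isotypic part via multiplicity one and Eichler--Shimura, the $\pm$-splitting under the involution $\iota$, and orthogonality of distinct eigensystems under a Hecke-equivariant pairing --- are exactly the content of the Eichler--Shimura--Manin machinery recalled in \cite{CS}, and you are explicit that you quote rather than reprove it; that is a legitimate way to establish a result the paper itself only cites. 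The one place where your argument is genuinely thinner than the statement requires is the non-newform case: for a normalised eigenform that is old at level $N$, the eigensystem for $\{T_p:p\nmid N\}$ occurs with multiplicity greater than one, so the $f$-isotypic space of modular symbols need not be two-dimensional over $K_f$ and the clean projector-plus-multiplicity-one argument does not apply verbatim. Your parenthetical appeal to $\Q$-rational degeneracy maps is the right fix (transport the computation to the underlying newform of lower level, whose two periods then serve as $\omega_1(f),\omega_2(f)$), but it would need to be carried out, since this is precisely the extra generality that distinguishes Razar's Theorem 4 from the level-one periods theorem.
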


\subsection{The space of (smooth, excised) semi-analytic vectors}\label{DefMain}
Let $k \in \mathbb Z + \frac12$ and let $\G$ be a subgroup of $\SL_2(\mathbb R)$. However, we do not consider the cusps $\mathfrak{a}$ of $\G$ as equivalent classes, but simply as elements in $\Q\cup\{i\infty\}$. For an open subset $U \subset \mathbb{P}^1(\mathbb{C})$ not containing $i$,
we consider the map sending a function $f: U \to \C$
 to $\text{prj}_{2-k}(f): U \to \C$ defined by
\[  \text{prj}_{2-k} (f)(\tau):=(i-\tau)^{2-k}f( \tau).
\]
This map was used in \cite{BCD} to move between the ``projective'' and the ``plane'' model. In practice, it serves to simplify some computations.

For an open subset $U \subset \mathbb{P}^1(\mathbb{C})$,
we let $\hol(U)$ denote the space of holomorphic functions on $U$ and we define the space of {\it semi-analytic vectors} associated to $\{\mathfrak{a}_1, \dots, \mathfrak{a}_n\} \subset \mathbb{P}^1(\Q):=\Q \cup \{i\infty\}$ as
\begin{equation*}%\label{defILomega}
D_{2-k}^{\omega}[\mathfrak{a}_1, \mathfrak{a}_2, \dots, \mathfrak{a}_n]:= \text{prj}_{2-k}^{-1}\indlim \hol(\Omega),
\end{equation*}
where $\Omega$ runs over the open sets of $\mathbb P^1_{\mathbb C}$ containing $\mathbb H^- \cup \mathbb P^1_{\mathbb R} \setminus \{\mathfrak{a}_1, \mathfrak{a}_2, \dots, \mathfrak{a}_n\}$. We call $\mathfrak{a}_j$ \emph{singular points}.
We next define the space of {\it smooth semi-analytic vectors} associated with $\{\mathfrak{a}_1, \dots, \mathfrak{a}_n\}$ as
$$D_{2-k}^{\omega, \infty}[\mathfrak{a}_1, \mathfrak{a}_2, \dots, \mathfrak{a}_n]:=
D_{2-k}^{\omega}[\mathfrak{a}_1, \mathfrak{a}_2, \dots, \mathfrak{a}_n]\cap
\text{prj}_{2-k}^{-1}\left(C^{\infty}\left(\mathbb H^- \cup \mathbb P^1_{\mathbb R}\right)\right).$$
A function $f$ belongs to $D^{\omega}_{2-k}[\mathfrak{a}_1, \dots, \mathfrak{a}_n]$ iff prj$_{2-k}(f)$ is holomorphic in some neighborhood of $\mathbb H^- \cup \mathbb P^1_{\mathbb R} \setminus \{\mathfrak{a}_1, \mathfrak{a}_2, \dots, \mathfrak{a}_n\}$.
On the other hand, $f$ belongs to $D^{\omega, \infty}_{2-k}[\mathfrak{a}_1, \dots, \mathfrak{a}_n]$ iff prj$_{2-k}(f)$ is holomorphic in some neighborhood $\mathbb H^- \cup \mathbb P^1_{\mathbb R} \setminus \{\mathfrak{a}_1, \mathfrak{a}_2, \dots, \mathfrak{a}_n\}$ and smooth on $\mathbb H^- \cup \mathbb P^1_{\mathbb R}$. This means that even though $f$ may not be holomorphic at $\mathfrak{a}_j$, it has an asymptotic series exists there: For any $N \ge 1$, we have, as $t \to \mathfrak{a}_j$
\begin{equation}\label{asym}
	f(t)=\sum_{n=0}^{N-1}a_n(t-\mathfrak{a}_j)^n+O\left((t-\mathfrak{a}_j)^N\right) \, \, \text{if $\mathfrak{a}_j \in \mathbb Q$,} \quad f(t)=\sum_{n=0}^{N-1}a_n t^{-n}+O\left(t^{-N}\right) \, \, \text{if $\mathfrak{a}_j=\infty$.}
\end{equation}

We finally define the spaces of excised semi-analytic vectors. Let $\mathfrak{a} \in \mathbb{P}^1(\Q):=\Q \cup \{i\infty\}$ be a cusp of $\Gamma$.
Suppose that $M(i\infty)=\mathfrak{a}$ for some $M \in$ $\SL_2(\Z).$ For $a,\varepsilon \in \mathbb{R}^+$, set
\[
  V_{\mathfrak{a}}(a, \varepsilon):= \{M\tau\in \mathbb H: |\tau_1| \le a, \tau_2 >\varepsilon\}.
\]
For a finite set $E$ of cusps of $\SL_2(\Z)$
a set $\Omega \subset \mathbb{P}^1(\mathbb{C})$ is called an {\it
$E$-excised neighbourhood} of $\mathbb H^-\cup \mathbb{P}^1(\mathbb{R})$ if there
exists a neighbourhood $U$ of $\mathbb H^-\cup \mathbb{P}^1(\mathbb{R})$ and
pairs of positive reals $(a_{\ca}, \varepsilon_{\ca})$ ($\ca \in E$) such
that $U \backslash \bigcup_{\mathfrak{a} \in E} V_{\mathfrak{a}}(a_{\ca},
\varepsilon_{\ca}) \subset \Omega$. An example is shown in Figure \ref{fig-excnbh} (note that part of $\Omega$ lies in $\mathbb H$).

\begin{figure}[ht]
\[\setlength\unitlength{.9cm}
\begin{picture}(10,5)(0,0)
\put(0,0){\line(1,0){10}}
\put(3.1,.1){$\mathfrak{a}_1$}
\put(5.1,.1){$\mathfrak{a}_2$}
\put(4,.1){$\Omega$}
\put(.5,-.4){$\Omega$}
\put(7,-.4){$\Omega$}
\put(9.1, 2){$\Omega$}
\put(3,-1.5){$\Omega$}
\put(.1, 2){$\Omega$}
\put(.1, .1){$\Omega$}
\put(9,.1){$\Omega$}
\put(3,0){\circle*{.1}}
\put(5,0){\circle*{.1}}
\put(3,.8){\vector(0,-1){.5}}
\put(2.8,1.2){$V_{\mathfrak{a}_1}(a_{\mathfrak{a}_1}, \varepsilon_{\mathfrak{a}_1})$}
\put(4,3){$V_{i \infty}(a_{i \infty}, \varepsilon_{i \infty})$}
\put(4.2,3.5){\vector(0,1){.6}}
\put(6,2){not in $\Omega$}
\thicklines
\put(2.5,0){\oval(1,1)[rt]}
\put(3.5,0){\oval(1,1)[lt]}
\put(4.5,0){\oval(1,1)[rt]}
\put(5.5,0){\oval(1,1)[lt]}
\put(3.5,.53){\line(1,0){1}}
\put(1,1){\oval(1,1)[lb]}
\put(1,.5){\line(1,0){1.5}}
\put(.5,1){\line(0,1){3}}
\put(8.5,1){\oval(1,1)[rb]}
\put(9,1){\line(0,1){3}}
\put(5.5,.5){\line(1,0){3}}
\end{picture}
\]
\vskip .5in
\caption{An $\{i\infty,\mathfrak{a}_1,\mathfrak{a}_2\}$-excised
neighbourhood.}\label{fig-excnbh}
\end{figure}
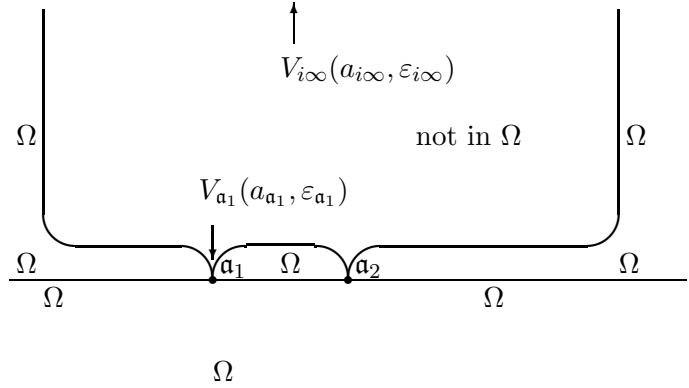
Define the space of {\it excised semi-analytic vectors}
associated to a finite set $\{\mathfrak{a}_1, \mathfrak{a}_2,
\dots, \mathfrak{a}_n\}$ of cusps
\begin{equation*}%\label{defIL}
\De_{2-k}[\mathfrak{a}_1, \mathfrak{a}_2, \dots, \mathfrak{a}_n]:= \text{prj}_{2-k}^{-1}\indlim\hol(\Omega),
\end{equation*}
where $\Omega$ runs over all $\{\mathfrak{a}_1, \mathfrak{a}_2, \dots,
\mathfrak{a}_n\}$-excised
neighbourhoods. A function $f$ belongs to $\De_{2-k}[\mathfrak{a}_1,$ $\dots,\mathfrak{a}_n]$ iff prj$_{2-k}(f)$ is holomorphic in some $\{\mathfrak{a}_1, \dots,\mathfrak{a}_n\}$-excised
neighbourhood. For example, if, for a function $f$, prj$_{2-k}(f)$ is holomorphic in the region $\Psi$ shown in Figure \ref{fig-sp}, then $f \in \De_{2-k}[-\frac dc]$, since $\Psi$ is a $\{-\frac dc\}$-excised neighborhood.
\vskip -12mm
		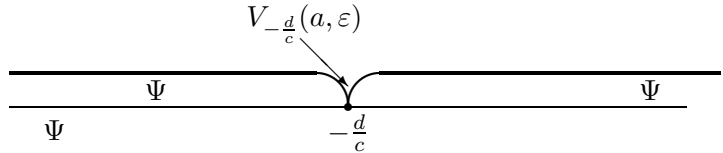
\begin{figure}[ht]
			\[
			\setlength\unitlength{.9cm}
			\begin{picture}(10,3)(0,0)
			\put(0,0){\line(1,0){10}}
			%			\put(2.9,.1){$\mathfrak{a}_1$}
			\put(4.7,-.5){$-\frac dc$}
			\put(2,.1){$\Psi$}
			\put(.5,-.5){$\Psi$}
			%\put(5.6,2.5){$C_{-\frac dc}$}
			\put(9.3,.1){$\Psi$}
			%			\put(3,0){\circle*{.1}}
			\put(5,0){\circle*{.1}}
			\put(4.3, 1){\vector(1,-1){.7}}
			\put(3.5,1.2){$V_{-\frac dc}(a
				%		_{-\frac dc}
				, \varepsilon
				%		_{-\frac dc}
				)$}
			%		\put(4,-3){$V_{i \infty}(a_{i \infty}, \varepsilon_{i \infty})$}
			%		\put(4.2,-3.5){\vector(0,-1){.6}}
			%		\put(6,-2){not in $\Psi$}
			\thicklines
			\put(4.5,0){\oval(1,1)[rt]}
			\put(5.5,0){\oval(1,1)[lt]}
			\put(5.5,.5){\line(1,0){5}}
			\put(3.5,.5){\line(1,0){1}}
			\put(0,.5){\line(1,0){4}}
			%		\put(5,.2){\line(1,2){1.5}}
			%	\put(5,.2){\line(1,4){.75}}
			%		\put(5.7,3.2){\line(1,0){.8}}
			\end{picture}
			\]
			\vskip 0cm \caption{A $\{-\frac dc\}$-excised neighbourhood, for $c \neq 0$.}\label{fig-sp}
		\end{figure}
We are now ready to define the following key spaces, given as the inductive limits
\begin{equation*}\label{defIL0}
 D^{\omega}_{2-k} :=\indlim D^{\omega}_{2-k}[\mathfrak{a}_1, \dots, \mathfrak{a}_n], \, \, \,  D^{\omega, \infty}_{2-k}:=\indlim D^{\omega, \infty}_{2-k}[\mathfrak{a}_1, \dots, \mathfrak{a}_n], \, \, \, \De_{2-k}:=\indlim \De_{2-k}[\mathfrak{a}_1, \dots, \mathfrak{a}_n],
\end{equation*}
where $\{\mathfrak{a}_1, \mathfrak{a}_2, \dots, \mathfrak{a}_n\}$ ranges
over all finite sets of cusps of $\SL_2(\Z)$. An important fact is that, by Proposition 1.14 of \cite{BCD}, these spaces is closed under the action $|_{2-k, \chi}$. The space that we mainly use as our coefficient module is then defined as
$$D^{\omega, \infty, \text{exc}}_{2-k}:=D^{\omega, \infty}_{2-k} \cap D^{\omega,\text{exc}}_{2-k}.$$
\begin{examples}\hspace{0cm}{\rm
	\begin{enumerate}[label=\rm(\arabic*),leftmargin=*]
		\item If prj$_{2-k}(f)$ is holomorphic in $\mathbb C\setminus\{\tau:|\tau_2|>M\}$ for some $M>0$, then $f \in \De_{2-k}$ because $f \in \De_{2-k}[\infty]$.
 If moreover $f$ has an asymptotic expansion as in \eqref{asym}, for $t \to \infty$, then $f \in D^{\omega, \infty, \text{exc}}_{2-k}.$
%		\item If prj$_{2-k}(f)$ is holomorphic in $\Psi$ as in Figure \ref{fig-sp}, then $f \in \De_{2-k}$ because $f \in \De_{2-k}[-\frac dc]$.
		\item Suppose that $\mathfrak a_1=1, \mathfrak a_2=2$ as in Figure \ref{fig-excnbh}. If prj$_{2-k}(f)$ is holomorphic in $\Omega$ as in Figure \ref{fig-excnbh}, then $f \in \De_{2-k}[1, 2, \infty]$ and thus $f \in \De_{2-k}$.  If, in addition, $f$ has asymptotic expansions as in \eqref{asym} for $t \to \mathfrak{a}_1,$ $t \to \mathfrak{a}_2$ and $t \to \infty,$ then $f \in D^{\omega, \infty, \text{exc}}_{2-k}.$
	\end{enumerate}}
\end{examples}

\section{Iterated integrals and multiple $L$-series}\label{II}
In this section, we first introduce a map \eqref{master} from which the main objects we consider are derived. We then define the multiple $L$-series with additive twists and show how they characterise the values of the map \eqref{master} at certain integers.

Let $\Gamma\in\{\Gamma_0(N),\langle \Gamma_0(N),W_N \rangle\}$. For $k_j \in 2\mathbb N$, $f_j \in S_{k_j}(\Gamma)$, and $s_j \in \mathbb C$, where $j\in\{1, \dots r\}$, Manin, in \cite[Definition 2.2]{MI}, introduced the following \emph{iterated Mellin transform}
$$M(f_1,\dots f_r; s_1, \dots s_r)=(-1)^r\int_{0}^{i \infty} \int_{w_1}^{i \infty} \dots \int_{w_{r-1}}^{i \infty} \prod_{j=1}^rf_j(w_j)w_j^{s_j} dw_r \dots dw_1. $$
Generalising the classical case, for $s_j\in\N$, he expressed it as a linear combination of values of a {\it multiple $L$-series} defined as
$$L_{f_1, \dots, f_r}(s_1,\dots, s_r)=\sum_{n_1, \dots, n_r \ge 1}\frac{c_1(n_1)\dots c_r(n_r)}{(n_1+\dots+n_r)^{s_1}\dots (n_{r-1}+n_r)^{s_{r-1}}n_r^{s_r}},$$ where $c_j(n)$ denotes the $n$-th Fourier coefficient of $f_j.$
These have been studied for example in \cite{C,Pr}, where the emphasis was on their values at positive integer $s_j$, in analogy with the multiple zeta values. We do not use the term ``multiple modular values'' for those values because this term has been employed by Brown for a vast generalisation of such values which have been studied motivically in \cite{B1,B2}.

In each of the papers \cite{C,Pr}, the study of the multiple $L$-series was based on an expression in terms of a variation of Manin's iterated Mellin transform. Each of these variations was designed to focus on a different aspect of the multiple $L$-series. For instance, in \cite{C} it was used to derive relations among values of multiple $L$-series, mirroring the extensively studied subject of relations among multiple zeta values. For this, a first, together with \cite{BC}, cohomological study of multiple $L$-series and their values is undertaken and analogues of Eichler integrals are defined.

Our principal object in this section unifies and extends the constructions of those works. We consider $k_j \in \frac12 \mathbb Z$, $f_j \in S_{k_j}(\Gamma, \chi)$, and $s_j \in \mathbb C$, where $j\in\{1, \dots r\}$. Then, we let $r^*_{f_1, \dots f_r}(s_1, \dots, s_r) (\gamma)(\tau)$ be the map
sending $\gamma \in \Gamma_0(N)$ to the function given, for $\tau \in \mathbb H^- \cup \mathbb Q \cup \{i \infty \}$
\begin{equation}\label{master}
r^*_{f_1, \dots f_r}(s_1, \dots, s_r)(\gamma)(\tau)=\int_{\gamma^{-1}i\infty}^{i \infty} \int_{w_1}^{i \infty} \dots \int_{w_{r-1}}^{i \infty} \prod_{j=1}^rf_j(w_j)(w_j-\tau)^{s_j} dw_r \dots dw_1.
\end{equation}
We can retrieve Manin's iterated Mellin transform. Namely, for $\Gamma=\langle \Gamma_0(N), W_N \rangle$, we have
$$r^*_{f_1, \dots f_r}(s_1, \dots, s_r)(W_N)(0)=(-1)^r M(f_1,\dots f_r; s_1, \dots s_r).$$
For $\tau \in \mathbb H^-$, the basic object $R_{r}$ of \cite{BC} is
%is related to $r_{f_1, \dots f_r}$ by
$$R_r(f_1,\dots f_r; s_1, \dots s_r; \gamma^{-1}i \infty, \infty; \tau):=(-1)^r r^*_{f_1, \dots f_r}(s_1, \dots, s_r)(\gamma)(\tau).$$
If $k_j \in 2 \mathbb N$, and $N=1$, then the polynomial $R_{2, r}(\gamma; (\tau_1, \dots, \tau_r))$ of \cite{C}, for $\tau_1=\dots=\tau_r=\tau$ equals
$$ R_{2, r}(\gamma; (\tau, \dots, \tau))=r^*_{f_1, \dots f_r}(k_1-2, \dots, k_{r}-2)(\gamma)(\tau).$$

Our counterpart of the integral expressions for the multiple $L$-series in the above works pertains to the additive twist of the multiple $L$-series. For $\operatorname{Re}(s_j) \gg 1$, and $\frac ab \in \mathbb Q$, we set
$$L_{f_1, \dots, f_r}\lp \frac ab; s_1,\dots, s_r \rp:=\sum_{n_1, \dots, n_r \ge 1}\frac{c_1(n_1)\dots c_r(n_r)e^{2 \pi i (n_1+\dots +n_r)\frac ab}}{(n_1+\dots+n_r)^{s_1}\dots (n_{r-1}+n_r)^{s_{r-1}}n_r^{s_r}}$$ 
and we define the {\it completed multiple $L$-series with additive twists}
$$\Lambda_{f_1, \dots, f_r}\lp \frac ab; s_1,\dots, s_r \rp:=
\int_{\frac ab}^{i \infty} f_1(w_1) \lp w_1-\frac ab \rp ^{s_1-1} \int_{w_1}^{i \infty} \dots \int_{w_{r-1}}^{i \infty} \prod_{j=2}^rf_j(w_j)(w_j-w_{j-1})^{s_j-1}dw_j \, dw_1.$$
We set
$$A_{n_1, \dots n_r}^{[m_1,\dots m_r]}:=\binom{m_r}{n_r}\binom{m_r+m_{r-1}-n_r}{n_{r-1}} \dots \binom{m_r+\dots +m_2-n_r-\dots-n_2}{n_1}.$$
 We then have the following result which justifies the name ``completed $L$-series''.
\begin{proposition}\label{MellLser} \hspace{0cm}
\begin{enumerate}[leftmargin=*,label=\rm(\arabic*)]
	\item For $\operatorname{Re}(s_j) \gg 1$, we have
	$$
		\Lambda_{f_1, \dots, f_r}\lp \frac ab; s_1,\dots, s_r \rp=	\frac{\Gamma(s_1)\dots \Gamma(s_r)}{(-2\pi i)^{s_1+\dots s_r}}L_{f_1, \dots, f_r}\lp \frac ab; s_1,\dots, s_r \rp.
	$$
	\item Fix $m_j \in 2 \mathbb N$. For $\gamma \in \G$, we have
	\begin{equation*}
		 r^*_{f_1, \dots, f_r}(m_1, \dots, m_r)(\gamma)(\tau)=\sum_{n=0}^{m_1+\dots+m_r}	\mathcal L_{f_1, \dots f_r} \lp \gamma^{-1}(i\infty) ; n \rp \lp \gamma^{-1}(i\infty)-\tau \rp^n,
	\end{equation*}
where
	$$
		\mathcal L_{f_1, \dots f_r} \lp \frac ab; n \rp:= \sum_{\substack{n_2, \dots, n_r \ge 0 \\ n_1=m_1+\dots+m_r-n_2-\dots-n_r-n}} A_{n_1, \dots n_r}^{[m_1,\dots m_r]} \Lambda_{f_1, \dots f_r} \lp \frac ab; n_1+1, \dots, n_r+1 \rp.
	$$
\end{enumerate}
\end{proposition}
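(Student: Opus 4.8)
The two statements are of quite different natures, so I will treat them separately, though both follow from the same basic input: expanding $(w_j-\tau)^{m_j}$ around a rational base point via the binomial theorem and recognising the resulting iterated integrals as (completed) multiple $L$-series with additive twists.

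For part (1), the strategy is a direct iterated Mellin-transform computation. Starting from the definition of $\Lambda_{f_1,\dots,f_r}(\tfrac ab;s_1,\dots,s_r)$, substitute the $q$-expansions $f_j(w_j)=\sum_{n_j\ge 1}c_j(n_j)e^{2\pi i n_j w_j}$, shift each variable $w_j\mapsto w_j+\tfrac ab$ so that the additive character $e^{2\pi i(n_1+\dots+n_r)\frac ab}$ factors out, and then evaluate the innermost integral first. The innermost integral $\int_{w_{r-1}}^{i\infty}e^{2\pi i n_r w_r}(w_r-w_{r-1})^{s_r-1}\,dw_r$ is, after the change of variables $u=w_r-w_{r-1}$, a Gamma integral producing $\Gamma(s_r)(-2\pi i n_r)^{-s_r}e^{2\pi i n_r w_{r-1}}$; crucially this reproduces an exponential $e^{2\pi i n_r w_{r-1}}$ which merges with $e^{2\pi i n_{r-1}w_{r-1}}$, so the next integral has exponential rate $2\pi i(n_{r-1}+n_r)$, and inductively the $j$-th integral contributes $\Gamma(s_j)(-2\pi i(n_j+\dots+n_r))^{-s_j}$. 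Assembling these factors over $j=1,\dots,r$ gives exactly the denominator $(n_1+\dots+n_r)^{s_1}\cdots(n_{r-1}+n_r)^{s_r}$ and the prefactor $\Gamma(s_1)\cdots\Gamma(s_r)(-2\pi i)^{-(s_1+\dots+s_r)}$, which is the claim. The only technical point is justifying the interchange of summation and iterated integration, which is valid for $\operatorname{Re}(s_j)\gg 1$ by absolute convergence (the cusp forms decay exponentially at $i\infty$), and I would dispatch this with a standard dominated-convergence remark.

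For part (2), fix $m_j\in 2\mathbb N$ and write $a/b=\gamma^{-1}(i\infty)$ (which lies in $\mathbb Q\cup\{i\infty\}$). In the definition \eqref{master} of $r^*_{f_1,\dots,f_r}(m_1,\dots,m_r)(\gamma)(\tau)$, expand each factor $(w_j-\tau)^{m_j}=\sum_{\ell_j=0}^{m_j}\binom{m_j}{\ell_j}(w_j-\tfrac ab)^{\ell_j}(\tfrac ab-\tau)^{m_j-\ell_j}$ using the binomial theorem, writing $w_j-\tau=(w_j-\tfrac ab)+(\tfrac ab-\tau)$. Pulling the powers of $(\tfrac ab-\tau)$ outside the integrals, one is left with a linear combination, indexed by $(\ell_1,\dots,\ell_r)$, of iterated integrals of the shape $\int_{a/b}^{i\infty}f_1(w_1)(w_1-\tfrac ab)^{\ell_1}\int_{w_1}^{i\infty}\cdots\prod_{j\ge 2}f_j(w_j)(w_j-\tfrac ab)^{\ell_j}\,dw_j\cdots dw_1$. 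This is not yet a $\Lambda$, because the inner integrands involve $(w_j-\tfrac ab)^{\ell_j}$ rather than $(w_j-w_{j-1})^{\ell_j}$; the reconciliation is another binomial expansion, $(w_j-\tfrac ab)^{\ell_j}=((w_j-w_{j-1})+(w_{j-1}-\tfrac ab))^{\ell_j}$, applied successively from the outside in. Tracking the combinatorics of these nested binomial expansions is exactly what produces the coefficients $A^{[m_1,\dots,m_r]}_{n_1,\dots,n_r}$ as the product of binomial coefficients in the definition, and collecting the total power of $(\tfrac ab-\tau)$ into a single exponent $n$ yields the stated formula with $\mathcal L_{f_1,\dots,f_r}(\tfrac ab;n)$. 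The exponent $n$ ranges from $0$ to $m_1+\dots+m_r$ because the powers $\ell_j$ range over $[0,m_j]$.

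\textbf{Main obstacle.} The computational heart — and the only genuinely delicate bookkeeping — is the double layer of binomial expansions in part (2): one must carefully verify that re-expressing every $(w_j-\tfrac ab)$ in terms of consecutive differences $(w_i-w_{i-1})$ (for $i\le j$) and the fixed difference $(\tfrac ab-\tau)$, and then summing over all the intermediate indices, collapses precisely to the product-of-binomials expression $A^{[m_1,\dots,m_r]}_{n_1,\dots,n_r}$ with the summation constraint $n_1=m_1+\dots+m_r-n_2-\dots-n_r-n$. I would do this first for $r=2$ to fix the pattern (there the identity is a single application of $(w_1-\tfrac ab)^{\ell_1}=((w_1-\tfrac ab))^{\ell_1}$ — trivial — followed by expanding $(w_2-\tau)^{m_2}$ around $w_1$ and $w_1-\tau$ around $a/b$), and then set up the general case by induction on $r$, peeling off the outermost integral. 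Everything else — convergence, the Gamma-integral evaluations, and the interchange of sum and integral — is routine given that the $f_j$ are cusp forms.
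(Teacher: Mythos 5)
Your part (1) is essentially the paper's argument: the paper simply chooses the integration lines $w_1=\frac ab+it_1$, $w_j=w_{j-1}+it_j$ and cites the one-dimensional computation, which is exactly the chain of Gamma integrals with merging exponentials that you spell out. For part (2) you take a genuinely different route. The paper expands from the inside out: it applies the binomial theorem only to the innermost factor, $(w_r-\tau)^{m_r}=((w_r-w_{r-1})+(w_{r-1}-\tau))^{m_r}$, which turns the innermost integrand into $(w_r-w_{r-1})^{n_r}$ and raises the exponent of $(w_{r-1}-\tau)$ to $m_{r-1}+m_r-n_r$; iterating inward-to-outward (ending with an expansion around $\frac ab$ at the outermost layer) produces the coefficient $A^{[m_1,\dots,m_r]}_{n_1,\dots,n_r}$ \emph{directly} as the stated product of binomials, with no summation identity left to verify. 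Your route --- expanding every $(w_j-\tau)^{m_j}$ around $\frac ab$ first and only afterwards converting each $(w_j-\frac ab)^{\ell_j}$ into consecutive differences --- is valid, but the ``main obstacle'' you flag is real: collapsing the resulting double layer of sums to the single product $A^{[m_1,\dots,m_r]}_{n_1,\dots,n_r}$ requires a Vandermonde-type identity at each stage. Already for $r=2$ one must check $\sum_{\ell_1+\ell_2=n_1+n_2}\binom{m_1}{\ell_1}\binom{m_2}{\ell_2}\binom{\ell_2}{n_2}=\binom{m_2}{n_2}\binom{m_1+m_2-n_2}{n_1}$, which does hold (use $\binom{m_2}{\ell_2}\binom{\ell_2}{n_2}=\binom{m_2}{n_2}\binom{m_2-n_2}{\ell_2-n_2}$ and then Vandermonde), so your plan closes; but the paper's ordering of the expansions buys you the coefficient for free, and I would recommend adopting it, since your sketch of the $r=2$ case in the last paragraph in fact already drifts toward the paper's inside-out ordering rather than the ``all around $\frac ab$ first'' scheme you announce.
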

\begin{proof}
(1) We choose the lines of integration $w_1=\frac ab+it_1$ and $w_j=w_{j-1}+it_j$ ($t_j \ge 0$) and the conclusion follows as in the one-dimensional case (e.g. (2.10) of \cite{DHKL}).

\noindent(2) The Binomial Theorem applied to $(w_r-\tau)^{m_r}=((w_r-w_{r-1})+(w_{r-1}-\tau))^{m_r}$ implies that
\begin{multline*}r^*_{f_1, \dots, f_r}(m_1, \dots, m_r)(\gamma)(\tau)=\sum_{n_r \ge 0} \binom{m_r}{n_r}
\int_{\frac ab}^{i \infty} \dots \int_{w_{r-3}}^{i \infty}\prod_{j=1}^{r-2}f_j(w_j)(w_j-\tau)^{m_j}\\
\times \int_{w_{r-2}}^{i \infty}\!\! f_{r-1}(w_{r-1})(w_{r-1}-\tau)^{m_{r-1}+m_r-n_{r}}\!\int_{w_{r-1}}^{i \infty}\!\! f_{r}(w_{r})(w_{r}-w_{r-1})^{n_{r}}dw_r \dots dw_1.
\end{multline*}
By induction, we deduce the claim.
\end{proof}

\section{The case of integral weight.}\label{integral}
In this section we apply the set up of Section 3 to the case of integral weight to derive results about values of multiple $L$-series with additive twists.

\subsection{Depth $1$.}\label{de1} The case of depth $1$ is the setting of the classical Eichler--Shimura--Manin theory outlined in \Cref{cohom}. Specifically, let $k \in 2 \mathbb N$ and $\mathbb C_{k-2}[\tau]$ the space of polynomials of degree at most $k-2$. The group $\Gamma\in\{\Gamma_0(N),\langle \Gamma_0(N), W_N \rangle\}$ acts on $\mathbb C_{k-2}[\tau]$ via $|_{2-k}$. We have the map sending $f \in S_k(\Gamma)$ to the map $r_{f}: \Gamma \to \mathbb C_{k-2}[\tau]$ given by
\begin{equation}\label{Defr}r_f(\gamma)(\tau):=r^*_f(k-2)(\gamma)(\tau)=\int_{\gamma^{-1}i\infty}^{i \infty} f(w)(w-\tau)^{k-2}dw.
\end{equation}
The map $r_f$ is a restriction of the map \eqref{DefrPrel} which induces the Eichler--Shimura isomorphism.

\subsection{Depth $2$.}\label{depth2} We next consider the case of two cusp forms $f_j \in S_{k_j}(\Gamma)$, $j\in\{1,2\}$, and the map sending $\gamma \in \Gamma$ to the following polynomial in $\mathbb C_{k_1+k_2-4}[\tau]$
$$
	r_{f_1, f_2}(\gamma)(\tau)\!:=\!r^*_{f_1,  f_2}(k_1-2, k_2-2)(\gamma)(\tau)\!=\!\int_{\gamma^{-1}i\infty}^{i \infty} \int_{w_1}^{i \infty} f_1(w_1)f_2(w_2)(w_1-\tau)^{k_1-2}(w_2-\tau)^{k_2-2}dw_2 dw_1.
$$
The group $\Gamma$ acts on $\mathbb C_{k_1+k_2-4}[\tau]$. The map $r_{f_1, f_2}$ is no longer a $1$-cocycle but it satisfies the following generalisation of the $1$-cocycle relation.
\begin{theorem}\label{2dimZ}
	Let $f_j \in S_{k_1}(\Gamma)$, $\gamma_j\in\Gamma$ ($j\in\{1,2\}$). We have
\begin{equation}\label{gen1coc}
	r_{f_1, f_2}(\gamma_{1}\gamma_{2})- r_{f_1, f_2}(\gamma_{1})|_{4-k_1-k_2}\gamma_{2}- r_{f_1, f_2}(\gamma_{2}) =r_{f_1}(\gamma_{1})|_{k_1}\gamma_{2} \, \cdot r_{f_2}(\gamma_{2}).
\end{equation}
\end{theorem}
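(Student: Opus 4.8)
The plan is to unwind the definition of $r_{f_1,f_2}$ directly in terms of iterated integrals and track how the path of integration changes when we replace $\gamma$ by $\gamma_1\gamma_2$. Recall that $r_{f_1,f_2}(\gamma)(\tau)$ is the iterated integral from $\gamma^{-1}i\infty$ to $i\infty$ (outer variable $w_1$) and from $w_1$ to $i\infty$ (inner variable $w_2$). The key observation is the additivity of the outer path: since $(\gamma_1\gamma_2)^{-1}i\infty=\gamma_2^{-1}(\gamma_1^{-1}i\infty)$, we may split the outer integral as $\int_{\gamma_2^{-1}\gamma_1^{-1}i\infty}^{i\infty}=\int_{\gamma_2^{-1}\gamma_1^{-1}i\infty}^{\gamma_2^{-1}i\infty}+\int_{\gamma_2^{-1}i\infty}^{i\infty}$. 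The second piece, together with its inner integral, is exactly $r_{f_1,f_2}(\gamma_2)(\tau)$, so it will cancel the corresponding term on the left-hand side. The work is therefore concentrated in the first piece $\int_{\gamma_2^{-1}\gamma_1^{-1}i\infty}^{\gamma_2^{-1}i\infty} f_1(w_1)(w_1-\tau)^{k_1-2}\!\left(\int_{w_1}^{i\infty}f_2(w_2)(w_2-\tau)^{k_2-2}dw_2\right)dw_1.$

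Next I would change variables $w_1\mapsto \gamma_2 w_1$ in this remaining outer integral, using the modularity identity \eqref{modul} for $f_1$ (with $k=k_1$, $\chi$ trivial): this turns $f_1(w_1)(w_1-\tau)^{k_1-2}\,dw_1$ into $f_1(\gamma_2 w_1)(\gamma_2 w_1-\gamma_2\tau)^{k_1-2}j(\gamma_2,\tau)^{k_1-2}\,d(\gamma_2 w_1)$ is \emph{not} quite what appears — rather \eqref{modul} in the form with $2-k$ replaced by $2-k_1$ shows that after the substitution the outer factor becomes $f_1(w_1)(w_1-\gamma_2^{-1}\tau)^{k_1-2}$ times $j(\gamma_2,\tau)^{-(k_1-2)}$, with the new path running from $\gamma_1^{-1}i\infty$ to $i\infty$. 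The inner integral must be split at the point $\gamma_2 w_1$: $\int_{\gamma_2 w_1}^{i\infty}=\int_{\gamma_2 w_1}^{\gamma_2 i\infty}+\int_{\gamma_2 i\infty}^{i\infty}$ — wait, more cleanly, after substituting $w_2\mapsto\gamma_2 w_2$ in the part of the inner range corresponding to $w_2$ near $w_1$ and keeping the tail as is, one finds the inner integral $\int_{w_1}^{i\infty}f_2(w_2)(w_2-\gamma_2^{-1}\tau)^{k_2-2}dw_2$ plus a ``tail'' term $\int_{\gamma_2 i\infty}^{i\infty}f_2(w_2)(w_2-\tau)^{k_2-2}dw_2=r_{f_2}(\gamma_2)(\tau)$, which does \emph{not} depend on $w_1$ and hence factors out of the outer integral. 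The factored-out tail, multiplied by what remains of the outer integral, produces precisely $r_{f_1}(\gamma_2)(\gamma_2^{-1}\tau)\cdot$(stuff)$\cdot r_{f_2}(\gamma_2)(\tau)$, which after re-expressing $g(\gamma_2^{-1}\tau)j(\gamma_2,\tau)^{\cdots}$ in terms of the slash action is $r_{f_1}(\gamma_1)|_{k_1}\gamma_2\cdot r_{f_2}(\gamma_2)$ — this is the right-hand side of \eqref{gen1coc} — while the non-tail part reassembles into $r_{f_1,f_2}(\gamma_1)$ evaluated at $\gamma_2^{-1}\tau$ with the appropriate $j$-factor, i.e.\ $r_{f_1,f_2}(\gamma_1)|_{4-k_1-k_2}\gamma_2$.

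Concretely, the steps in order are: (i) split the outer path at $\gamma_2^{-1}i\infty$ and peel off $r_{f_1,f_2}(\gamma_2)$; (ii) in the remaining double integral, substitute $w_1\mapsto\gamma_2 w_1$ and $w_2\mapsto\gamma_2 w_2$, applying \eqref{modul} to each of $f_1,f_2$ to absorb the automorphy factors; (iii) split the (substituted) inner path at $\gamma_2^{-1}\cdot\gamma_2 i\infty = i\infty$ — equivalently recognise that the original inner path from $w_1$ to $i\infty$ pulls back under $\gamma_2^{-1}$ to the path from $w_1$ to $\gamma_2^{-1}i\infty$, so that $\int_{w_1}^{\gamma_2^{-1}i\infty}=\int_{w_1}^{i\infty}-\int_{\gamma_2^{-1}i\infty}^{i\infty}$ — and observe that the subtracted term is $w_1$-independent; (iv) distribute, identifying the ``main'' term as $r_{f_1,f_2}(\gamma_1)|_{4-k_1-k_2}\gamma_2$ and the ``cross'' term as $r_{f_1}(\gamma_1)|_{k_1}\gamma_2\cdot r_{f_2}(\gamma_2)$, using $r_{f_j}(\gamma_1)(\tau)=\int_{\gamma_1^{-1}i\infty}^{i\infty}f_j(w)(w-\tau)^{k_j-2}dw$ and the definition of the slash action.

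The main obstacle I anticipate is purely bookkeeping: keeping the branch cuts of $(w-\tau)^{s}$ and the automorphy factors $j(\gamma_2,\tau)^{s}$ consistent through the substitutions, and making sure that the ``tail'' of the inner integral really does split off cleanly as a $w_1$-independent factor equal to $r_{f_2}(\gamma_2)$ rather than leaving a residual dependence. Since here $k_1,k_2$ are even integers there are no genuine branch issues (the exponents $k_j-2$ are integers), so \eqref{modul} applies verbatim and the identity is an honest consequence of path additivity and the change of variables; the half-integral case, where branch cuts genuinely matter, is precisely what is deferred to \Cref{hiw}. I would also double-check the degree/weight bookkeeping: $r_{f_1,f_2}(\gamma)$ lands in $\mathbb C_{k_1+k_2-4}[\tau]$ on which $\Gamma$ acts by $|_{2-(k_1+k_2-2)}=|_{4-k_1-k_2}$, consistent with the statement, and the product $r_{f_1}(\gamma_1)|_{k_1}\gamma_2\cdot r_{f_2}(\gamma_2)$ has degree $(k_1-2)+(k_2-2)=k_1+k_2-4$ in $\tau$, so the cocycle-of-depth-$2$ identity is homogeneous of the correct degree.
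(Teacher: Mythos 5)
Your argument is correct and is essentially the paper's proof run in the opposite direction: the paper starts from $r_{f_1,f_2}(\gamma_1)|_{4-k_1-k_2}\gamma_2$, converts it via the substitutions $w_j\mapsto\gamma_2 w_j$ and \eqref{modul} into the double integral over the region $[(\gamma_1\gamma_2)^{-1}i\infty,\gamma_2^{-1}i\infty]\times[w_1,\gamma_2^{-1}i\infty]$, and then performs exactly your two path splittings at $i\infty$ before finishing with the $1$-cocycle relation for $r_{f_1}$, which is the same four-term decomposition you obtain by starting from $r_{f_1,f_2}(\gamma_1\gamma_2)$. The only blemishes are notational slips in your intermediate bookkeeping (e.g.\ writing $\gamma_2^{-1}\tau$ where the slash by $\gamma_2$ produces $\gamma_2\tau$, and $r_{f_1}(\gamma_2)$ where $r_{f_1}(\gamma_1)$ is meant), which do not affect the correct final identifications in your steps (i)--(iv).
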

\begin{proof}
The change of variables $w_2 \mapsto \gamma_2^{-1}w_2$ combined with \eqref{modul}, applied to $\chi_j = 1$ and $\gamma=\gamma_2$, implies that $r_{f_1, f_2}(\gamma_{1})|_{4-k_1-k_2}\gamma_{2}(\tau)$ equals
	\begin{equation*}
%		r_{f_1, f_2}(\gamma_{1})|_{4-k_1-k_2}\gamma_{2}(\tau) =
\int_{\gamma_{1}^{-1}i \infty}^{i \infty} f_1(w_1)(w_1-\gamma_2 \tau)^{k_1-2}j(\gamma_2, \tau)^{k_1-2} \int_{\gamma_2^{-1} w_1}^{\gamma_2^{-1}i \infty} f_2(w_2)(w_2-\tau)^{k_2-2}dw_2 dw_1.
	\end{equation*}
	Making the change of variables $w_1 \mapsto \gamma_2w_1$, we deduce that
	\begin{align*}
		&r_{f_1, f_2}(\gamma_1)|_{4-k_1-k_2}\gamma_2(\tau) = \int_{(\gamma_1 \gamma_{2})^{-1}i \infty}^{\gamma_2^{-1}i \infty} \int_{w_1}^{\gamma_{2}^{-1}i \infty} f_1(w_1)f_2(w_2)(w_1-\tau)^{k_1-2}(w_2-\tau)^{k_2-2}dw_2dw_1\\
		&\hspace{-0.1cm}=\lp\! \int_{(\gamma_1 \gamma_{2})^{-1}i \infty}^{i \infty}\int_{w_1}^{i \infty} \!\!\!+\! \int_{(\gamma_1 \gamma_{2})^{-1}i \infty}^{i \infty} \int_{i \infty}^{\gamma_{2}^{-1}i \infty} \!\!\!+\! \int_{i \infty}^{\gamma_2^{-1}i \infty} \hspace{-0.25cm} \int_{w_1}^{i \infty} \!\!\!+\! \int_{i \infty}^{\gamma_2^{-1}i \infty} \hspace{-0.25cm} \int_{i \infty}^{\gamma_{2}^{-1}i \infty} \!\rp\! \prod_{j=1}^2\!f_j(w_j)(w_j\!-\!\tau)^{k_j-2} dw_2 dw_1\\
		&\hspace{-0.1cm}=r_{f_1, f_2}(\gamma_1 \gamma_2)(\tau)-r_{f_1}(\gamma_1 \gamma_2)(\tau)r_{f_2}(\gamma_2)(\tau)-r_{f_1, f_2}(\gamma_2)(\tau) + r_{f_1}(\gamma_2)(\tau)r_{f_2}(\gamma_2)(\tau).
\end{align*}
	The conclusion follows from the $1$-cocycle relation \eqref{1cocy} for $r_{f_1}$.
\end{proof}
The relation in \Cref{2dimZ} can be interpreted as a ``$1$-cocycle relation up to lower depth terms" and this was the viewpoint taken in \cite{BC}. In contrast, in our perspective, we take those ``lower order terms" more explicitly into account. With \Cref{2dimZ}, we can now prove Theorem \ref{lincomb}.

\noindent{\it Proof of Theorem \ref{lincomb}.}
With Proposition \ref{MellLser} (2), the left-hand side of \eqref{gen1coc} becomes
\begin{multline}\label{LHS}\sum_{n=0}^{k_1+k_2-4} \Big ( \mathcal L_{f_1, f_2} \lp (\gamma_1 \gamma_2)^{-1}i \infty; n \rp \lp (\gamma_1 \gamma_2)^{-1}i \infty-\tau \rp^{n}\\
 -\mathcal L_{f_1, f_2} \lp \gamma_1^{-1}i \infty; n \rp \lp \gamma_1^{-1}i \infty-\gamma_2 \tau \rp^{n}j(\gamma_2, \tau)^{k_1+k_2-4}-
\mathcal L_{f_1, f_2} \lp \gamma_2^{-1}i \infty; n \rp \lp \gamma_2^{-1}i \infty-\tau \rp^{n} \Big ).
\end{multline}
The coefficients of this, as polynomials in $\tau$, are linear combinations of the form described in the statement of \Cref{lincomb}. The right-hand side of \eqref{gen1coc} is clearly a polynomial in $\mathbb C_{k_1+k_2-4}[\tau]$ whose coefficients are $\mathbb Q$-linear combinations of
$\Lambda_{f_1}(\gamma_1^{-1}i \infty, n) \Lambda_{f_2}(\gamma_2^{-1}i \infty, m)$ with $1 \le n \le k_1-1$ and $1 \le m \le k_2-1$. By comparing polynomial coefficients between the left- and the right-hand side, we deduce the claim. \qed
%\end{proof}

The proof of Theorem \ref{lincomb} gives the following corollary.
\begin{corollary}\label{L2dim} For  $\gamma_1, \gamma_2 \in \Gamma_0(N)\setminus\{I_2\}$, we have that
\begin{multline}\label{LHSc}\sum_{n=0}^{k_1+k_2-4} \Big ( \mathcal L_{f_1, f_2} \lp (\gamma_1 \gamma_2)^{-1}i \infty; n \rp \lp (\gamma_1 \gamma_2)^{-1}i \infty \rp^{n}\\
-\mathcal L_{f_1, f_2} \lp \gamma_1^{-1}i \infty; n \rp \lp \gamma_1^{-1}i \infty-\gamma_2 0 \rp^{n}j(\gamma_2, 0)^{k_1+k_2-4}-
 \mathcal L_{f_1, f_2} \lp \gamma_2^{-1}i \infty; n \rp \lp \gamma_2^{-1}i \infty \rp^{n} \Big )
\end{multline}
is a $\mathbb Q$-linear combination of $\Lambda_{f_1}(\gamma_1^{-1}i \infty, n) \Lambda_{f_2}(\gamma_2^{-1}i \infty, m)$ ($1 \le n \le k_1-1$, $1 \le m \le k_2-1$).
In particular, for each $k \in \mathbb N,$
\begin{equation}\label{LHScspec}
	\sum_{n=0}^{k_1+k_2-4} \lp -\frac{1}{N} \rp^n \lp \frac{1}{(1+k)^n}\mathcal L_{f_1, f_2} \lp \frac{-1}{N(k+1)}; n \rp -\mathcal L_{f_1, f_2} \lp \frac{-1}{N}; n \rp -\frac{1}{k^n}\mathcal L_{f_1, f_2} \lp \frac{-1}{Nk}; n \rp \rp
\end{equation}
is a $\mathbb Q$-linear combination of $\Lambda_{f_1}(\frac{-1}{N},n) \Lambda_{f_2}(\frac{-1}{Nk},m)$ ($1 \le n \le k_1-1$, $1 \le m \le k_2-1$). If $f_1, f_2$ are normalised Hecke eigenforms, then, for $\gamma_1, \gamma_2 \in \Gamma_0(N)\setminus\{I_2\}$ and $k \in \mathbb N$, \eqref{LHSc} and \eqref{LHScspec} belong to the $4$-dimensional $K_{f_1}K_{f_2}$-space generated by
$\omega_1(f_1) \omega_1(f_2)$, $\omega_1(f_1) \omega_2(f_2)$, $\omega_2(f_1) \omega_1(f_2)$, and $\omega_2(f_1) \omega_2(f_2)$.
%namely $$K_{f_1}K_{f_2}\omega_1(f_1) \omega_1(f_2)+K_{f_1}K_{f_2}\omega_1(f_1) \omega_2(f_2)+K_{f_1}K_{f_2}\omega_2(f_1) \omega_1(f_2)+K_{f_1}K_{f_2}\omega_2(f_1) \omega_2(f_2)$$
\end{corollary}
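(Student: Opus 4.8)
The plan is to deduce \Cref{L2dim} directly from the proof of \Cref{lincomb}, so only a small amount of additional bookkeeping is required. First I would note that \eqref{LHSc} is precisely the polynomial in \eqref{LHS} evaluated at $\tau=0$: setting $\tau=0$ in $\bigl((\gamma_1\gamma_2)^{-1}i\infty-\tau\bigr)^n$ gives $\bigl((\gamma_1\gamma_2)^{-1}i\infty\bigr)^n$, in $\bigl(\gamma_1^{-1}i\infty-\gamma_2\tau\bigr)^n j(\gamma_2,\tau)^{k_1+k_2-4}$ gives $\bigl(\gamma_1^{-1}i\infty-\gamma_2 0\bigr)^n j(\gamma_2,0)^{k_1+k_2-4}$, and similarly for the third term. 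By \Cref{2dimZ} this equals the right-hand side $r_{f_1}(\gamma_1)|_{k_1}\gamma_2(0)\cdot r_{f_2}(\gamma_2)(0)$, which, exactly as in the proof of \Cref{lincomb} (using \Cref{MellLser}(2) in depth $1$, i.e.\ $r_{f_j}(\gamma)(\tau)=\sum_{n}\binom{k_j-2}{n}(-2\pi i)^{-(n+1)}\Gamma(n+1)\cdots$ expressed via $\Lambda_{f_j}(\gamma^{-1}i\infty,\cdot)$), is a $\mathbb Q$-linear combination of products $\Lambda_{f_1}(\gamma_1^{-1}i\infty,n)\Lambda_{f_2}(\gamma_2^{-1}i\infty,m)$ with $1\le n\le k_1-1$, $1\le m\le k_2-1$. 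The restriction $\gamma_1,\gamma_2\ne I_2$ is only there to ensure the cusps $\gamma_j^{-1}i\infty$ are genuine rationals (so the $\Lambda$'s converge in the relevant region); I would remark that it plays no deeper role.

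Next I would treat the specialisation \eqref{LHScspec}. The idea is to choose explicit $\gamma_1,\gamma_2\in\Gamma_0(N)$ so that the abstract combination \eqref{LHSc} becomes the concrete one. Take $\gamma_2=\begin{psmallmatrix} 1 & 0\\ N & 1\end{psmallmatrix}$, so $\gamma_2^{-1}i\infty=-\tfrac1N$ and $j(\gamma_2,0)=1$, $\gamma_2 0=0$; and take $\gamma_1$ with $(\gamma_1\gamma_2)^{-1}i\infty=-\tfrac1{N(k+1)}$ and $\gamma_1^{-1}i\infty=-\tfrac1{Nk}$ — for instance $\gamma_1=\begin{psmallmatrix} k+1 & 1\\ -Nk^2\!-\!Nk & 1\!-\!Nk \end{psmallmatrix}$ or any such element; I would check that a valid choice in $\Gamma_0(N)$ exists and record it (this is a short explicit computation with $2\times2$ matrices and the constraint $c\equiv0\pmod N$, $ad-bc=1$). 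One must also verify the powers of $-\tfrac1N$ and the factors $\tfrac1{(1+k)^n}$, $\tfrac1{k^n}$ come out correctly from $\bigl(\gamma_j^{-1}i\infty\bigr)^n$ and $j(\gamma_2,0)^{k_1+k_2-4}=1$; these just match term by term against \eqref{LHSc}.

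Finally, for the Hecke eigenform statement I would invoke \Cref{Razar} (Manin's Periods Theorem in the form due to \cite{R}): for a normalised Hecke eigenform $f$, every $\Lambda_f(\gamma^{-1}i\infty,n)$ with $1\le n\le k-2$ lies in $K_f\omega_1(f)+K_f\omega_2(f)$. Applying this to $f_1$ and $f_2$ separately and tensoring, each product $\Lambda_{f_1}(\gamma_1^{-1}i\infty,n)\Lambda_{f_2}(\gamma_2^{-1}i\infty,m)$ lies in the $K_{f_1}K_{f_2}$-span of the four quantities $\omega_i(f_1)\omega_j(f_2)$, $i,j\in\{1,2\}$; since \eqref{LHSc} and \eqref{LHScspec} are $\mathbb Q$-linear (hence $K_{f_1}K_{f_2}$-linear) combinations of such products, they lie in that $4$-dimensional space. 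The main obstacle is really only the explicit matrix selection in the second paragraph — making sure the chosen $\gamma_1,\gamma_2$ lie in $\Gamma_0(N)$, are distinct from $I_2$, and produce exactly the cusps $-\tfrac1{N(k+1)}$, $-\tfrac1{Nk}$, $-\tfrac1N$ with the stated scalar factors; everything else is an immediate transcription of the proof of \Cref{lincomb}.
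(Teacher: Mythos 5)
Your first and third steps coincide with the paper's own proof: the first claim is obtained exactly by comparing the constant term of \eqref{LHS} (equivalently, evaluating at $\tau=0$) with that of the right-hand side of \eqref{gen1coc}, and the last claim follows from Proposition \ref{Razar} applied to each factor. The genuine problem is in your specialisation for \eqref{LHScspec}: you have assigned the two cusps to $\gamma_1$ and $\gamma_2$ the wrong way round. Matching \eqref{LHScspec} against \eqref{LHSc} term by term, the summand $-\mathcal L_{f_1,f_2}\lp-\tfrac1N;n\rp\lp-\tfrac1N\rp^n$ must come from the $\gamma_1$-slot and $-\mathcal L_{f_1,f_2}\lp-\tfrac1{Nk};n\rp\lp-\tfrac1{Nk}\rp^n$ from the $\gamma_2$-slot, so one needs $\gamma_1^{-1}i\infty=-\tfrac1N$ and $\gamma_2^{-1}i\infty=-\tfrac1{Nk}$; the paper achieves this with $\gamma_1=\begin{psmallmatrix}1&0\\N&1\end{psmallmatrix}$ and $\gamma_2=\gamma_1^{k}$. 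Your choice ($\gamma_2^{-1}i\infty=-\tfrac1N$, $\gamma_1^{-1}i\infty=-\tfrac1{Nk}$) happens to produce the same numerical sum \eqref{LHScspec}, because the two middle terms are merely interchanged, but the general statement then decomposes it into products $\Lambda_{f_1}\lp-\tfrac1{Nk},n\rp\Lambda_{f_2}\lp-\tfrac1N,m\rp$, which is not what the corollary asserts (namely $\Lambda_{f_1}\lp-\tfrac1N,n\rp\Lambda_{f_2}\lp-\tfrac1{Nk},m\rp$); since $f_1$ and $f_2$ are distinct forms of possibly distinct weights, this is not the same set of products. In addition, your sample matrix $\begin{psmallmatrix}k+1&1\\-Nk^2-Nk&1-Nk\end{psmallmatrix}$ has determinant $k+1$, so it does not lie in $\SL_2(\mathbb Z)$. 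Both defects disappear once the roles are swapped as in the paper; with that correction your argument is the paper's argument.
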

\begin{proof}
	By comparing the constant term of \eqref{LHS} with that of the right-hand side of \eqref{gen1coc} we obtain the first claim. The second claim is deduced from the first one in the special case $\gamma_1=\begin{psmallmatrix}
	1 & 0\\
	N & 1
\end{psmallmatrix}$ and $\gamma_2=\gamma_1^k.$
The last claim is a consequence of Proposition \ref{Razar}.
\end{proof}
The comparison of leading terms of the polynomials gives the following.
\begin{corollary} For $\gamma_1, \gamma_2 \in \Gamma_0(N)\setminus\{I_2\}$, we have that
\begin{multline}\label{LHSl} \mathcal L_{f_1, f_2} \lp (\gamma_1 \gamma_2)^{-1}i \infty; k_1+k_2-4 \rp
-\sum_{n=0}^{k_1+k_2-4} (-1)^n \mathcal L_{f_1, f_2} \lp \gamma_1^{-1}i \infty; n \rp j(\gamma_2^{-1}, \gamma_1^{-1} \infty)^n c_{\gamma_2}^{k_1+k_2-4-n}
\\-
 \mathcal L_{f_1, f_2} \lp \gamma_2^{-1}i \infty; k_1+k_2-4 \rp
 = \lp \Lambda_{f_1}((\gamma_1 \gamma_2)^{-1}i \infty, 1)-
\Lambda_{f_1}(\gamma_2^{-1}i \infty, 1) \rp \Lambda_{f_2}(\gamma_2^{-1}i \infty, 1).
\end{multline}
%is a rational multiple of $\lp \Lambda_{f_1}((\gamma_1 \gamma_2)^{-1}i \infty, k_1-2)-
%\Lambda_{f_1}(\gamma_2^{-1}i \infty, k_1-2) \rp \Lambda_{f_2}(\gamma_2^{-1}i \infty, k_2-2)$.
Here $c_{\gamma_2}$ denotes the bottom left entry of $\gamma_2$.
In particular, for $k \in \mathbb N$, we have that
\begin{multline}\label{LHSlspec}\mathcal L_{f_1, f_2} \lp -\frac{1}{N(k+1)}; k_1+k_2-4 \rp -\sum_{n=0}^{k_1+k_2-4} (-k-1)^n (kN)^{k_1+k_2-4-n}\mathcal L_{f_1, f_2} \lp -\frac{1}{N}; n \rp\\ -\mathcal L_{f_1, f_2} \lp -\frac{1}{Nk}; k_1+k_2-4 \rp
=\lp\Lambda_{f_1} \lp -\frac1{N(k+1)}, 1 \rp-
\Lambda_{f_1} \lp -\frac1{Nk}, 1 \rp \rp \Lambda_{f_2} \lp -\frac1{Nk}, 1 \rp.
\end{multline}

If $f_1, f_2$ are normalised Hecke eigenforms, then, for $\gamma_1, \gamma_2 \in \Gamma_0(N)\setminus\{I_2\}$ and $k \in \mathbb N$, \eqref{LHSl} and \eqref{LHSlspec} belong to the $4$-dimensional $K_{f_1}K_{f_2}$-space generated by
$\omega_1(f_1) \omega_1(f_2), \dots, \omega_2(f_1) \omega_2(f_2)$.
%namely $$K_{f_1}K_{f_2}\omega_1(f_1) \omega_1(f_2)+K_{f_1}K_{f_2}\omega_1(f_1) \omega_2(f_2)+K_{f_1}K_{f_2}\omega_2(f_1) \omega_1(f_2)+K_{f_1}K_{f_2}\omega_2(f_1) \omega_2(f_2)$$
\end{corollary}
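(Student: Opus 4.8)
The statement to prove is the last corollary, which records the consequence of comparing the \emph{leading} (top-degree) coefficients of the polynomials on the two sides of \eqref{gen1coc}, together with its specialization \eqref{LHSlspec} and the concluding Hecke-eigenform assertion.

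\medskip

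\noindent\emph{Proof plan.} The plan is to extract the top-degree coefficient (the coefficient of $\tau^{k_1+k_2-4}$) from identity \eqref{gen1coc} of \Cref{2dimZ}, exactly as the coefficient of $\tau^0$ was extracted in the proof of \Cref{L2dim}. First I would expand the left-hand side via \eqref{LHS}, i.e.\ using Proposition \ref{MellLser}(2): the three terms are $\sum_{n}\mathcal L_{f_1,f_2}((\gamma_1\gamma_2)^{-1}i\infty;n)((\gamma_1\gamma_2)^{-1}i\infty-\tau)^n$, minus $\sum_n \mathcal L_{f_1,f_2}(\gamma_1^{-1}i\infty;n)(\gamma_1^{-1}i\infty-\gamma_2\tau)^n j(\gamma_2,\tau)^{k_1+k_2-4}$, minus $\sum_n \mathcal L_{f_1,f_2}(\gamma_2^{-1}i\infty;n)(\gamma_2^{-1}i\infty-\tau)^n$. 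The coefficient of $\tau^{k_1+k_2-4}$ in the first and third sums is immediate: only the $n=k_1+k_2-4$ term contributes, with coefficient $(-1)^{k_1+k_2-4}=1$ (since $k_1+k_2$ is even), giving $\mathcal L_{f_1,f_2}((\gamma_1\gamma_2)^{-1}i\infty;k_1+k_2-4)$ and $\mathcal L_{f_1,f_2}(\gamma_2^{-1}i\infty;k_1+k_2-4)$ respectively.

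\medskip

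\noindent The middle term needs a small computation: writing $\gamma_2=\sm a&b\\ c&d\esm$ with $c=c_{\gamma_2}$, one has $\gamma_1^{-1}i\infty-\gamma_2\tau$ as a Möbius-type expression, and the key algebraic fact is that $(\gamma_1^{-1}i\infty-\gamma_2\tau)\, j(\gamma_2,\tau) = j(\gamma_2,\tau)\gamma_1^{-1}i\infty - (a\tau+b)$, which is affine in $\tau$ with leading coefficient $c\,\gamma_1^{-1}i\infty - a = -j(\gamma_2^{-1},\gamma_1^{-1}i\infty)$ up to the usual cocycle identity for $j$; more precisely one uses $c(\gamma_1^{-1}i\infty) - a$ together with $j(\gamma_2^{-1},\gamma_2\tau) = j(\gamma_2,\tau)^{-1}$ to identify the coefficient of $\tau^{k_1+k_2-4-n}$ coming from $(\gamma_1^{-1}i\infty-\gamma_2\tau)^n j(\gamma_2,\tau)^{k_1+k_2-4}$ as $(-1)^n j(\gamma_2^{-1},\gamma_1^{-1}i\infty)^n c^{\,k_1+k_2-4-n}$ times the remaining power of $\tau$; summing over $n$ gives the stated sum. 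For the right-hand side, by \eqref{gen1coc} it equals $r_{f_1}(\gamma_1)|_{k_1}\gamma_2\cdot r_{f_2}(\gamma_2)$; the leading term of $r_{f_2}(\gamma_2)(\tau)$ in $\tau^{k_2-2}$ is $(-1)^{k_2-2}\Lambda_{f_2}(\gamma_2^{-1}i\infty,1)$ (by the $n=k_2-2$ coefficient extraction, i.e.\ Proposition \ref{MellLser}(2) in depth $1$), and the leading term of $r_{f_1}(\gamma_1)|_{k_1}\gamma_2$ in $\tau^{k_1-2}$ is, after the $|_{k_1}\gamma_2$ action and using the $1$-cocycle relation \eqref{1cocy} for $r_{f_1}$, equal to $\Lambda_{f_1}((\gamma_1\gamma_2)^{-1}i\infty,1)-\Lambda_{f_1}(\gamma_2^{-1}i\infty,1)$ up to sign; multiplying gives $\bigl(\Lambda_{f_1}((\gamma_1\gamma_2)^{-1}i\infty,1)-\Lambda_{f_1}(\gamma_2^{-1}i\infty,1)\bigr)\Lambda_{f_2}(\gamma_2^{-1}i\infty,1)$ as claimed (the signs cancel because the total degree shift is even). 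Equating the two leading coefficients yields \eqref{LHSl}.

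\medskip

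\noindent For \eqref{LHSlspec} I would specialize to $\gamma_1=\sm 1&0\\ N&1\esm$ and $\gamma_2=\gamma_1^{\,k}=\sm 1&0\\ Nk&1\esm$, exactly the choice used in \Cref{L2dim}. Then $\gamma_1^{-1}i\infty=-\tfrac1N$, $\gamma_2^{-1}i\infty=-\tfrac1{Nk}$, $(\gamma_1\gamma_2)^{-1}i\infty=-\tfrac1{N(k+1)}$, $c_{\gamma_2}=Nk$, and $j(\gamma_2^{-1},\gamma_1^{-1}i\infty)=j(\sm 1&0\\ -Nk&1\esm,-\tfrac1N)=-Nk(-\tfrac1N)+1=k+1$; substituting these and simplifying $(-1)^n(k+1)^n(Nk)^{k_1+k_2-4-n}=(-k-1)^n(kN)^{k_1+k_2-4-n}$ gives \eqref{LHSlspec} directly. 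The final sentence is immediate from Proposition \ref{Razar}: when $f_1,f_2$ are normalised Hecke eigenforms, each $\Lambda_{f_j}(\gamma^{-1}i\infty,n)$ lies in $K_{f_j}\omega_1(f_j)+K_{f_j}\omega_2(f_j)$, hence every product $\Lambda_{f_1}(\cdots,1)\Lambda_{f_2}(\cdots,1)$ lies in the $4$-dimensional $K_{f_1}K_{f_2}$-span of $\omega_i(f_1)\omega_j(f_2)$; and the left-hand sides of \eqref{LHSl} and \eqref{LHSlspec} are equal to these products, so they lie there too.

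\medskip

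\noindent The main obstacle I anticipate is purely bookkeeping rather than conceptual: tracking the signs and the exact power of $j(\gamma_2,\tau)$ through the $|_{4-k_1-k_2}\gamma_2$ and $|_{k_1}\gamma_2$ actions when isolating top-degree coefficients, and verifying the identity $(\gamma_1^{-1}i\infty-\gamma_2\tau)j(\gamma_2,\tau)$ is affine in $\tau$ with the claimed leading coefficient $-j(\gamma_2^{-1},\gamma_1^{-1}i\infty)$. Once that Möbius/cocycle identity is pinned down, everything else is a routine comparison of polynomial coefficients parallel to the already-given proof of \Cref{L2dim}.
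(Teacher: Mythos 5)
Your proposal is correct and follows essentially the same route as the paper: compare leading coefficients of the two sides of \eqref{gen1coc}, handle the middle term by splitting $j(\gamma_2,\tau)^{k_1+k_2-4}=j(\gamma_2,\tau)^n j(\gamma_2,\tau)^{k_1+k_2-4-n}$ and using the cocycle identity for $j$ (equivalently \eqref{modul}), use the $1$-cocycle relation for $r_{f_1}$ on the right-hand side, then specialise to $\gamma_1=\begin{psmallmatrix}1&0\\N&1\end{psmallmatrix}$, $\gamma_2=\gamma_1^k$ and invoke Proposition \ref{Razar}. Your affine computation identifying the leading coefficient of $(\gamma_1^{-1}i\infty-\gamma_2\tau)j(\gamma_2,\tau)$ as $-j(\gamma_2^{-1},\gamma_1^{-1}i\infty)$ is exactly the content of the paper's appeal to \eqref{modul}, and the sign bookkeeping (everything even since $k_1+k_2\in 2\mathbb N$) is handled correctly.
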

\begin{proof}
	For the first claim we compare the leading terms of the two sides of \eqref{gen1coc}. To simplify the term $(\gamma_1^{-1}i \infty-\gamma_2 \tau)^{n}j(\gamma_2, \tau)^{k_1+k_2-4}=(\gamma_1^{-1}i \infty-\gamma_2 \tau)^{n}j(\gamma_2, \tau)^{n}j(\gamma_2, \tau)^{k_1+k_2-4-n}$ we employ \eqref{modul}. For the right-hand side we use the $1$-cocycle condition of $r_{f_1}$. The remaining two claims are deduced as in Corollary \ref{L2dim}.
\end{proof}
In the special case of $N=1$ and $\gamma_1=\gamma_2=W_1$, the conclusion of Theorem \ref{lincomb} was established in an explicit form in \cite[Theorem 3.2]{C}.
\section{The case of half-integral weight.}\label{hiw}
An essential difference between the cases of integral and half-integral weight is the domain of $r_{f_1, \dots, f_r}(\gamma)$. Whereas in the former case, $r_{f_1, \dots, f_r}(\gamma)$ can simply be treated as a polynomial in $\tau$, if the weights are non-integral, the range of $\tau$ must be chosen more carefully to ensure that the defining integral converges and is independent of the path of integration. This is automatic if $\tau$ belongs to $\mathbb H^-$ (and the cusps), however, for questions related to quantum modular forms, or to ``noncommutative boundary" (\cite[Subsection 0.2]{MI}), it is important to know if the function can be defined beyond $\mathbb H^-.$
In this section, we apply the general framework of \Cref{DefMain} to address this problem in a way that allows for a cohomological interpretation. We start by recalling some of the main objects of \cite{BKM,BN}, and show how they can be interpreted in our framework.

\subsection{Depth $1$}
Let $\Gamma=\Gamma_0(N)$ $4|N$, $k \in \mathbb Z+\frac12$, and $\chi$ a multiplier system of weight $k$. For $f \in S_{k}(\Gamma,\chi)$, the function $r_f(\gamma)$ defined by \eqref{Defr}, was studied (even for more general real $k$) for $\tau$ in the lower half-plane by Gunning, Kra, Knopp, and others (see \cite{Kn} and the references therein). From this perspective, it is possible to obtain isomorphisms in the spirit of Eichler--Shimura where the coefficient module of the classical Eichler theory is replaced by the space of holomorphic functions on $\mathbb H^-$ of ``polynomial growth" (the space $D_{2-k}^{-\infty}$ in the proof of Proposition \ref{BasTh} (2)). This approach corresponds more closely to that of \cite{BKM}, where the following lemma was proved (\cite[Lemma 2.3]{BKM}). To state it, for $f \in S_{k}(\Gamma,\chi)$, set
$$I_{f}(\tau) := \int_{\overline\tau}^{i \infty} \frac{f(w)}{(w-\tau)^{2-k}}dw.$$
\begin{lemma}\label{BKM23}
The function $I_{f}$ is defined on $\mathbb H^- \cup \mathbb Q$ and, for $\gamma \in \Gamma$ and $\tau \in \mathbb H^-$, we have
\begin{equation*}%\label{BKM23eq}
	I_f|_{2-k, \chi}(I_2-\gamma)=r_{f}(\gamma).
\end{equation*}
\end{lemma}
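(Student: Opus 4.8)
The plan is to verify the two assertions separately, treating first the domain of definition of $I_f$ and then the cocycle-type identity relating $I_f$ to $r_f$. For the domain, I would note that for $\tau \in \mathbb H^-$ the path from $\overline\tau$ to $i\infty$ may be taken as a vertical ray, along which $\im(w) \ge |\tau_2| > 0$, so the factor $(w-\tau)^{k-2}$ is bounded away from its branch locus and the integrand decays exponentially because $f$ is a cusp form; convergence and path-independence are then immediate. For $\tau = \frac ab \in \mathbb Q$ one chooses the path $w = \frac ab + it$, $t \ge 0$, and checks that near $t=0$ the factor $(it)^{k-2}$ is integrable (since $k > 0$) while the exponential decay of $f$ handles $t \to \infty$; this is the standard argument for completed $L$-values, already invoked in the proof of Proposition \ref{MellLser}(1).

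For the identity $I_f|_{2-k,\chi}(I_2 - \gamma) = r_f(\gamma)$, the key computation is to unwind the definition of the slash action. By definition $I_f|_{2-k,\chi}(I_2-\gamma)(\tau) = I_f(\tau) - \chi^{-1}(\gamma) j(\gamma,\tau)^{k-2} I_f(\gamma\tau)$. In the second term I would perform the change of variables $w \mapsto \gamma w$ in the integral defining $I_f(\gamma\tau)$ and apply the modularity identity \eqref{modul} with $\chi$ in place of the trivial character; this converts
$$
\chi^{-1}(\gamma) j(\gamma,\tau)^{k-2} \int_{\overline{\gamma\tau}}^{i\infty} \frac{f(w)}{(w-\gamma\tau)^{2-k}}\,dw
$$
into $\int_{\gamma^{-1}\overline{\gamma\tau}}^{\gamma^{-1}i\infty} \frac{f(w)}{(w-\tau)^{2-k}}\,dw$. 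Now $\gamma^{-1}i\infty$ is the lower endpoint appearing in $r_f(\gamma)$, and since $\gamma$ is real the conjugate commutes with the action in the sense that $\gamma^{-1}\overline{\gamma\tau} = \overline{\gamma^{-1}\gamma\tau} = \overline\tau$. Subtracting from $I_f(\tau) = \int_{\overline\tau}^{i\infty}$ and using additivity of the integral over concatenated paths yields exactly $\int_{\gamma^{-1}i\infty}^{i\infty} f(w)(w-\tau)^{k-2}\,dw = r_f(\gamma)(\tau)$.

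The main obstacle is bookkeeping about branch cuts: the factor $(w-\tau)^{k-2}$ with $k$ half-integral is multivalued, so one must ensure that the change of variables $w \mapsto \gamma w$ respects the principal-branch convention fixed at the end of \Cref{cohom}, and that the path $\gamma^{-1}$(vertical ray from $\overline{\gamma\tau}$) stays in the region where the chosen branch of $(w-\tau)^{2-k}$ is the analytic continuation of the one used on the original vertical ray from $\overline\tau$. For $\tau \in \mathbb H^-$ this is manageable because the relevant paths and their images under $\gamma$ remain in $\mathbb H^-$ together with the real boundary, where $\arg(w-\tau)$ varies in a controlled range; the identity \eqref{modul} is stated precisely so that this branch matching is built in. I would spell out this point carefully, as it is exactly the subtlety that \cite{BKM} emphasizes and that makes the upper- versus lower-half-plane behaviour differ.
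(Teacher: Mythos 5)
The paper does not actually prove this lemma; it quotes it verbatim from \cite[Lemma 2.3]{BKM}, and your reconstruction --- unfolding the slash action, changing variables $w\mapsto\gamma w$ via \eqref{modul}, using $\gamma^{-1}\overline{\gamma\tau}=\overline{\tau}$ (valid since $\gamma$ is real), and concatenating paths --- is exactly the standard argument and is correct. Two small slips worth fixing: the integration paths lie in the closed \emph{upper} half-plane $\mathbb H\cup\mathbb R\cup\{i\infty\}$ (since $\overline\tau\in\mathbb H$ when $\tau\in\mathbb H^-$), not in $\mathbb H^-$ as you write; this is precisely why $\im(w-\tau)>0$ throughout, so $(w-\tau)^{k-2}$ stays on a single principal branch and the cut is never met. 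Also, at a rational endpoint $\frac ab$ the local integrability of $(it)^{k-2}$ near $t=0$ requires $k>1$, not $k>0$; for small half-integral weights one should instead invoke the exponential decay of the cusp form $f$ at the cusp $\frac ab$, which makes the integral converge at that endpoint regardless of the power.
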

\iffalse
Incidentally, from this lemma we can immediately deduce the $1$-cocycle condition for the restriction of $r_f$ to the lower-half plane. Indeed, for $\gamma_{1}, \gamma_{2} \in \Gamma$, we have
\begin{equation}\label{form1cocy}
	r_{f}(\gamma_{1}\gamma_{2})=I_f|(1-\gamma_{1}\gamma_{2})=I_f|(1-\gamma_{2})+I_f|(\gamma_{2}-\gamma_{1}\gamma_{2})=r_{f}(\gamma_{2})+r_{f}(\gamma_{1})|\gamma_{2}.
\end{equation}

The next lemma gives a series expansion of $I_f(\gamma)(\tau).$
\begin{lemma}\label{expan}
	Let
	$
		f(\tau)=\sum_{n \ge 1}a_n q^n \in S_{k}(\Gamma,\chi).
	$
	Then we have, for $\tau\in \mathbb H^-$,
	$$
		I_{f}(\tau) = i^{ k-1} \sum_{n \ge 1} \frac{a_n}{(2 \pi n)^{k-1}}\Gamma(k-1, -4 \pi n \tau_2) q^n.
	$$
\end{lemma}
\begin{proof}
	Since the integrand is holomorphic, we can choose the integration path to be the vertical line starting at $\tau$ to obtain
	\begin{multline*}
		\int_{\overline \tau}^{i \infty}\frac{f(w)}{(w-\tau)^{2-k}} dw \overset{w=\overline\tau+it}=
		i\int_{0}^{ \infty}\frac{f(\overline \tau+it)}{ (\overline \tau-\tau+it)^{2-k}} dt\\
		= i\sum_{n \ge 1}a_n e^{2 \pi i n \overline \tau} \int_{0}^{ \infty}\frac{e^{-2\pi n t}}{(i (-2\tau_2+t))^{2-k}} dt
		\overset{t\mapsto t+2\tau_2}= i^{k-1} \sum_{n \ge 1}a_n e^{2 \pi i n \tau} \int_{-2\tau_2}^{ \infty}\frac{e^{-2\pi n t}}{t^{2-k}} dt.
	\end{multline*}
	This implies the statement.
\end{proof}
\fi
An analogous proposition to Lemma \ref{BKM23} was proved in \cite[Theorem 1.5]{BN} in the case that $f$ is a weight $\frac32$ unary theta function. We state it here, but we slightly renormalise the function denoted $F_{j, N}$ used there, in order to stress the similarity with Lemma \ref{BKM23}. Namely for $1 \le j \le N-1$, let
$$f_{j, N}(\tau):=\frac{1}{2N}\sum_{\substack{n \in \mathbb Z \\ n \equiv j \hspace{-0.2cm} \pmod{2N}}}n q^{\frac{n^2}{4N}}$$
be a weight $\frac32$ unary theta function with some multiplier system $\{\psi_{j, r} \}$ and set
$$F_{j, N}(\tau):=\frac{i}{\sqrt{2N}}\sum_{\substack{n \in \mathbb Z \\ n \equiv j \hspace{-0.2cm} \pmod{2N}}} \sgn (n) q^{\frac{n^2}{4N}}.$$
\begin{proposition}\label{BN15}
For $\gamma =
\begin{psmallmatrix}
	a & b\\
	c & d
\end{psmallmatrix}
\in \SL_2(\mathbb Z)$ and $\tau \in \mathbb H$, there exists a path of integration of $r_{f_{j, N}}(\gamma)(\tau)$ avoiding the branch cut defined by $\sqrt{-i(z-\tau)}$ such that
\begin{equation*}%\label{BN15eq}
F_{j, N}(\tau)-\sgn(c\tau_1+d)\sum_{j=1}^{N-1}F_{r, N}|_{\frac12, \psi_{j, r}}\gamma(\tau)=r_{f_{j, N}}(\gamma)(\tau).
\end{equation*}
\end{proposition}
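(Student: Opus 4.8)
The plan is to derive this from \cite[Theorem 1.5]{BN}; the entire argument is a translation of normalisations together with a bookkeeping of branch choices, and I would not reprove the branch-cut analysis of \cite{BN} from scratch. First I would restate \cite[Theorem 1.5]{BN} in its original form. For the weight-$\frac32$ unary theta functions attached to the residues modulo $2N$ and the associated false theta functions --- denote them, in the normalisation of \cite{BN}, by $\widetilde f_{r,N}$ and $\widetilde F_{r,N}$ --- that theorem asserts, for $\gamma=\begin{psmallmatrix}a&b\\c&d\end{psmallmatrix}\in\SL_2(\mathbb Z)$ and $\tau\in\mathbb H$, an identity of the shape
$$
\widetilde F_{j,N}(\tau)-\sgn(c\tau_1+d)\sum_{r=1}^{N-1}\widetilde F_{r,N}|_{\frac12,\psi_{j,r}}\gamma(\tau)=c_0\int_{\gamma^{-1}i\infty}^{i\infty}\frac{\widetilde f_{j,N}(w)}{\sqrt{-i(w-\tau)}}\,dw,
$$
where the integral is taken along a path staying on a prescribed side of the vertical branch cut of $\sqrt{-i(w-\tau)}$ emanating from $\tau$, $c_0$ is an explicit nonzero constant, and $(\psi_{j,r})$ is the multiplier matrix recording the $\SL_2(\mathbb Z)$-transformation of the vector $(\widetilde f_{r,N})_r$.

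Second, I would match this with the objects of the present paper. With $k=\frac32$ the factor in $r_{f_{j,N}}(\gamma)$ is $(w-\tau)^{k-2}=(w-\tau)^{-1/2}$, and once a branch of $\sqrt{-i(w-\tau)}$ is fixed as in the statement one has $(w-\tau)^{-1/2}=(-i)^{1/2}\big(\sqrt{-i(w-\tau)}\big)^{-1}$ up to a fixed eighth root of unity that is constant along the admissible paths; thus the right-hand side above equals $c_1\int_{\gamma^{-1}i\infty}^{i\infty}\widetilde f_{j,N}(w)(w-\tau)^{-1/2}dw$ for a nonzero constant $c_1$. The phrase ``along a path avoiding the branch cut defined by $\sqrt{-i(z-\tau)}$'' in the proposition is exactly what is needed for this rewriting, and for the value of the integral, to be unambiguous; path-independence within the relevant homotopy class follows from the rapid decay of $\widetilde f_{j,N}$ at both endpoints. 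I would then observe that $\widetilde f_{j,N}$ is a fixed scalar multiple of $f_{j,N}=\tfrac1{2N}\sum_{n\equiv j}nq^{n^2/4N}$, and that the normalisation $F_{j,N}=\tfrac{i}{\sqrt{2N}}\sum_{n\equiv j}\sgn(n)q^{n^2/4N}$ is precisely the scalar multiple of $\widetilde F_{j,N}$ for which the identity above, after these substitutions, reads with constant $1$; checking this is a direct comparison of Fourier expansions, the one non-trivial constant being the value $\Gamma(\tfrac12,\cdot)$ of the elementary integral $\int_0^\infty$ that already appears in the computation behind Lemma~\ref{BKM23}. After these substitutions the displayed identity becomes exactly $F_{j,N}(\tau)-\sgn(c\tau_1+d)\sum_{r}F_{r,N}|_{\frac12,\psi_{j,r}}\gamma(\tau)=r_{f_{j,N}}(\gamma)(\tau)$, which is the claim.

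Finally, I expect the genuinely delicate point --- the reason this is a theorem of \cite{BN} rather than a one-line consequence of Lemma~\ref{BKM23} --- to be the factor $\sgn(c\tau_1+d)$ together with the compatibility of the three independent branch choices involved: the branch of $(w-\tau)^{-1/2}$ inside the integral, the branch of $j(\gamma,\tau)^{-1/2}$ in the weight-$\frac12$ slash action, and the eighth roots of unity in the theta multipliers $\psi_{j,r}$. Transporting the cut of $\sqrt{-i(w-\tau)}$ under the simultaneous substitution $w\mapsto\gamma w$, $\tau\mapsto\gamma\tau$ and tracking whether $\gamma^{-1}i\infty$ and $i\infty$ land on the same side of the cut produces exactly this global sign; making that rigorous is the heart of \cite[Theorem 1.5]{BN} and is where the real work lies, whereas the normalisation dictionary, the path-independence, and the identification of the two integrals are routine.
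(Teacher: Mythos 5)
Your proposal is correct and matches the paper's treatment: the paper gives no proof of Proposition~\ref{BN15} at all, stating explicitly that it is \cite[Theorem 1.5]{BN} restated after ``slightly renormalising'' the function $F_{j,N}$, which is exactly the citation-plus-normalisation-dictionary route you take. Your additional remarks on branch compatibility and the sign $\sgn(c\tau_1+d)$ correctly identify where the substance of \cite{BN} lies, but none of that is reworked in the paper either.
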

In contrast to Lemma \ref{BKM23}, the function \eqref{Defr} in Proposition \ref{BN15} is studied in the upper half-plane where considerations of branch cuts become crucial. The constructions of the last subsection allow a systematic way to account for the branch cuts in a way compatible with the action of the group so that we can interpret $r_f$ cohomologically.
In particular, the proposition can be reformulated to avoid reference to the dependence on the path of integration in its statement.
\begin{proposition}\label{BN15'} For $\gamma =
\begin{psmallmatrix}
	a & b\\
	c & d
\end{psmallmatrix}
\in \SL_2(\mathbb Z)$ there exists a $-\frac dc$-excised neighborhood $\Psi$ such that, for $\tau \in  \mathbb H \cap \Psi$, we have
\begin{equation*}%\label{BN15eq'}
F_{j, N}(\tau)-\sgn(c\tau_1+d)\sum_{j=1}^{N-1}F_{r, N}|_{\frac12, \psi_{j, r}}\gamma(\tau)=r_{f_{j, N}}(\gamma)(\tau).
\end{equation*}
\end{proposition}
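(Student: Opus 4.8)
The plan is to deduce Proposition \ref{BN15'} from Proposition \ref{BN15} by translating the ``path of integration avoiding the branch cut'' language into the geometric language of excised neighbourhoods introduced in \Cref{DefMain}. First I would recall that the only source of multivaluedness in $r_{f_{j,N}}(\gamma)(\tau)$ is the factor $(w-\tau)^{k-2}$ with $k=\tfrac32$, i.e. the function $\sqrt{-i(w-\tau)}$ appearing in the integrand; for $\tau\in\mathbb H^-$ the branch cut $\{w:\ -i(w-\tau)\in\mathbb R_{\le 0}\}$ stays away from the integration contour joining $\gamma^{-1}i\infty$ to $i\infty$, so the integral is unambiguous. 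The content of Proposition \ref{BN15} is that, for $\tau$ in a suitable subset of $\mathbb H$, one can still choose a contour avoiding the cut and obtain the stated identity. The point of the reformulation is that the set of $\tau\in\mathbb H$ for which such a contour exists, and for which the resulting function is holomorphic, is precisely (a subset of) a $-\tfrac dc$-excised neighbourhood.

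The key steps, in order, would be: (1) identify the locus of bad $\tau$. The obstruction occurs exactly when the branch point $\tau$, transported by the changes of variables used in the proof of Proposition \ref{BN15}, collides with the endpoints or forces the contour through the cut; tracing through the Fricke/translation normalisations this happens as $\tau$ approaches the cusp $-\tfrac dc = \gamma^{-1}i\infty$, since that is the endpoint of the contour. Thus for $\tau$ outside a neighbourhood $V_{-d/c}(a,\varepsilon)$ of that cusp — with $a,\varepsilon$ chosen small enough that the vertical-strip-type region $V_{-d/c}(a,\varepsilon)$ is the only place the contour can be pinched — a good contour exists. (2) Verify holomorphy: on the complement region, prj$_{3/2-2}$ applied to each term of the identity is holomorphic, because $F_{j,N}$ and the $F_{r,N}|_{1/2}\gamma$ are holomorphic on $\mathbb H^-\cup\mathbb P^1_{\mathbb R}$ minus finitely many cusps (they are Eichler-type integrals of unary theta functions), and $r_{f_{j,N}}(\gamma)$ is holomorphic wherever a fixed contour works, by differentiating under the integral sign. (3) Assemble: the region $\Psi := U\setminus V_{-d/c}(a,\varepsilon)$ for a suitable neighbourhood $U$ of $\mathbb H^-\cup\mathbb P^1_{\mathbb R}$ is by definition a $\{-\tfrac dc\}$-excised neighbourhood (as in Figure \ref{fig-sp}), and on $\mathbb H\cap\Psi$ the identity of Proposition \ref{BN15} holds because every $\tau$ there admits an admissible path; the dependence on the path disappears from the statement because both sides are now single-valued holomorphic functions on $\Psi$, agreeing on $\mathbb H\cap\Psi$.

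I expect the main obstacle to be step (1): pinning down \emph{which} excised neighbourhood works, i.e. showing that the exceptional set is genuinely confined to a neighbourhood of the single cusp $-\tfrac dc$ and does not spill over to other cusps or into $\mathbb H^-$. This requires carefully following the branch cut of $\sqrt{-i(w-\tau)}$ under the change of variables $w\mapsto\gamma w$ (equivalently $w_2\mapsto\gamma_2^{-1}w_2$ as in the proof of \Cref{2dimZ}) and checking that $\sgn(c\tau_1+d)$, which records on which side of the cut $\tau$ lies, is locally constant on $\mathbb H\cap\Psi$ — that sign flip across the line $c\tau_1+d=0$ is exactly what the excised region $V_{-d/c}(a,\varepsilon)$ is designed to remove. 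Once the geometry is set up correctly, the rest is a formal rephrasing of Proposition \ref{BN15} together with the elementary holomorphy statements, and the independence-of-path clause becomes automatic from the identity theorem applied on $\Psi$.
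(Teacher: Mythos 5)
The paper gives no separate proof of this proposition: it is presented as an immediate reformulation of Proposition \ref{BN15} in the excised-neighbourhood language of \Cref{DefMain}, and your proposal carries out exactly that translation (locating the obstruction near the single cusp $-\frac dc=\gamma^{-1}i\infty$, checking holomorphy, and observing that $\sgn(c\tau_1+d)$ is locally constant off the excised region). So your argument is correct and follows essentially the same route the paper intends.
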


The discussion of the different approaches behind Lemma \ref{BKM23} and Proposition \ref{BN15} motivates the choice of the coefficient module in the next proposition. In a sense, Proposition \ref{BasTh} represents a unification of the above two approaches to studying $r_f(\gamma)$; part (1) essentially says that the function $r_{f}(\gamma)$, originally defined on $\mathbb H^-$, can be holomorphically continued ``slightly above" the real axis taking into account the existence of branch cuts. In addition it is done so that we remain within a $\Gamma$-invariant space and the resulting map is a $1$-cocycle. In part (2), the non-triviality of the cohomology class is proved by restricting to $\mathbb H^-$ (from the slightly larger domain where $r_f(\gamma)$ is defined in part (1)) and exploiting Knopp's version of the Eichler--Shimura isomorphism \cite[Theorem 1]{Kn}. This interplay between the two approaches is important throughout this paper.

\begin{proposition}\label{BasTh}\hspace{0cm}
\begin{enumerate}[leftmargin=*,label=\rm(\arabic*)]
	\item The map $\gamma \mapsto r_f(\gamma)$ induces a $1$-cocycle with coefficients in $D^{\omega, \infty, \textup{exc}}_{2-k}$. This induces a map
	$$
		\br_k: S_k(\Gamma, \chi) \longrightarrow H^1\left(\Gamma, D^{\omega, \infty, \textup{exc}}_{2-k}\right).
	$$
	\item  If $f \neq 0$, then the class $\br_k(f)=[r_f]$ in $H^1(\Gamma, D^{\omega, \infty, \textup{exc}}_{2-k})$ is non-trivial.
\end{enumerate}
\end{proposition}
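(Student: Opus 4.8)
The plan is to establish part (1) by combining the unifying viewpoint of Lemma \ref{BKM23} and Proposition \ref{BN15'}, and part (2) by restriction to $\mathbb H^-$ together with Knopp's Eichler--Shimura isomorphism.

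For part (1), the first step is to show that $r_f(\gamma)$, a priori defined only on $\mathbb H^-\cup\mathbb Q$ via \eqref{Defr}, lies in $D^{\omega,\infty,\mathrm{exc}}_{2-k}$. Following Lemma \ref{BKM23}, I would write $r_f(\gamma) = I_f|_{2-k,\chi}(I_2-\gamma)$, so it suffices to control $I_f$ and its translates $I_f|_{2-k,\chi}\gamma$. The function $I_f(\tau)=\int_{\overline\tau}^{i\infty} f(w)(w-\tau)^{k-2}dw$ is holomorphic on $\mathbb H^-$ because there the integrand has no branch-cut issue; the point is that $\mathrm{prj}_{2-k}(I_f)$ extends holomorphically to a neighbourhood of $\mathbb H^-\cup\mathbb P^1_\mathbb R$ with the singular points being exactly the cusps arising from $\gamma$ (this is where the branch cut of $(w-\tau)^{k-2}$ forces us into an excised neighbourhood as in Figure \ref{fig-sp}), and that it is smooth up to $\mathbb P^1_\mathbb R$ with the asymptotic expansions \eqref{asym} coming from the cuspidal decay of $f$ — essentially the computation underlying Lemma \ref{expan} (the incomplete-gamma expansion), which shows the boundary behaviour is $C^\infty$ with a genuine asymptotic series. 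Since, by Proposition 1.14 of \cite{BCD}, the space $D^{\omega,\infty,\mathrm{exc}}_{2-k}$ is closed under $|_{2-k,\chi}$, the combination $I_f|_{2-k,\chi}(I_2-\gamma)$ also lies there, giving $r_f(\gamma)\in D^{\omega,\infty,\mathrm{exc}}_{2-k}$. The cocycle relation \eqref{1cocy} is then formal: $r_f(\gamma_2\gamma_1) = I_f|(I_2-\gamma_2\gamma_1) = I_f|(I_2-\gamma_1) + (I_f|(I_2-\gamma_2))|\gamma_1 = r_f(\gamma_1) + r_f(\gamma_2)|\gamma_1$, exactly the telescoping identity indicated in the commented-out \eqref{form1cocy}. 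This defines the map $\br_k$ on cohomology; linearity in $f$ is immediate from linearity of the integral.

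For part (2), suppose $f\neq 0$ but $[r_f]=0$, i.e.\ $r_f(\gamma) = P|_{2-k,\chi}(\gamma-I_2)$ for some fixed $P\in D^{\omega,\infty,\mathrm{exc}}_{2-k}$ and all $\gamma\in\Gamma$. Restricting $\tau$ to $\mathbb H^-$, both $r_f(\gamma)$ and $P$ are honest holomorphic functions on $\mathbb H^-$ of polynomial growth (the space called $D^{-\infty}_{2-k}$ in the statement), and the coboundary relation becomes a coboundary relation in Knopp's coefficient module. By Knopp's version of Eichler--Shimura \cite[Theorem 1]{Kn}, the period map $f\mapsto [r_f]$ into $H^1(\Gamma, D^{-\infty}_{2-k})$ is injective on $S_k(\Gamma,\chi)$, so triviality of the class forces $f=0$, a contradiction. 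The only subtlety here is checking that $P$, which lives in the larger excised space, actually restricts to an element of Knopp's module on $\mathbb H^-$ — but this is precisely the content of the inclusion $\mathbb H^- \subset$ every excised neighbourhood and the polynomial-growth condition built into the asymptotic expansions \eqref{asym}, so the restriction lands where we need it.

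The main obstacle I expect is the first step of part (1): verifying carefully that $\mathrm{prj}_{2-k}(r_f(\gamma))$ is genuinely holomorphic on an \emph{excised} neighbourhood of the correct shape and that the boundary values along $\mathbb P^1_\mathbb R$ are smooth with the prescribed asymptotic expansions. Concretely, one must track how the branch cut of $(w-\tau)^{k-2}$ (with $k$ half-integral) moves as $\tau$ crosses the real axis, identify that the only obstruction to holomorphic continuation is at the cusp $\gamma^{-1}i\infty$ (and at $i\infty$), and confirm that deforming the path of integration in \eqref{Defr} stays valid on the region $\Omega$ of Figure \ref{fig-excnbh}. This is the analytic heart of the argument; once the space membership is secured, the cocycle property and the non-triviality are comparatively soft, relying respectively on the telescoping identity and on Knopp's theorem quoted above.
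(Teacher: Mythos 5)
Your part (2) is essentially the paper's argument: the paper also restricts to $\mathbb H^-$, uses the chain of inclusions $D^{\omega,\infty,\textup{exc}}_{2-k}\subset \operatorname{prj}_{2-k}^{-1}(C^\infty(\mathbb H^-\cup\mathbb P^1_{\mathbb R}))\subset D^{-\infty}_{2-k}$ (quoted from \cite{BCD}), and deduces injectivity of $\br_k$ from the Knopp/Knopp--Mawi isomorphism $S_k(\Gamma,\chi)\cong H^1(\Gamma,D^{-\infty}_{2-k})$ via a commutative triangle; your contradiction phrasing is an equivalent packaging, and the subtlety you flag (that a potential coboundary $P$ restricts into Knopp's module) is exactly what the quoted inclusion supplies.

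For part (1), however, your route has a genuine flaw. The function $I_f(\tau)=\int_{\overline\tau}^{i\infty}f(w)(w-\tau)^{k-2}\,dw$ is \emph{not} holomorphic on $\mathbb H^-$: the lower endpoint $\overline\tau$ depends antiholomorphically on $\tau$, so $\partial_{\overline{\tau}}I_f(\tau)=-f\left(\overline\tau\right)\left(\overline\tau-\tau\right)^{k-2}\neq 0$ (equivalently, $I_f$ has an incomplete-gamma expansion in $\tau_2$, so it is real-analytic but not holomorphic). Consequently $I_f\notin D^{\omega,\infty,\textup{exc}}_{2-k}$, and you cannot invoke closure of that module under $|_{2-k,\chi}$ (Proposition 1.14 of \cite{BCD}) to conclude that $I_f|_{2-k,\chi}(I_2-\gamma)$ lies in it: closure under the action only applies to elements of the module. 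What is true is that the \emph{difference} $I_f|_{2-k,\chi}(I_2-\gamma)=r_f(\gamma)$ is holomorphic because the antiholomorphic contributions cancel --- equivalently, because $r_f(\gamma)$ has the fixed endpoints $\gamma^{-1}i\infty$ and $i\infty$ --- so membership $r_f(\gamma)\in D^{\omega,\infty,\textup{exc}}_{2-k}$ must be verified directly on $r_f(\gamma)$: holomorphy of $\operatorname{prj}_{2-k}(r_f(\gamma))$ on a $\{-\frac dc\}$-excised neighbourhood avoiding the branch cut, plus the asymptotic expansion \eqref{asym} at the singular point. That is precisely the content of Proposition 2.6 (i) of \cite{BCD}, which is all the paper cites for part (1); the same verification is carried out explicitly for general depth in the proof of Theorem \ref{BasThgen}. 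Your telescoping derivation of the cocycle identity from Lemma \ref{BKM23} is fine on $\mathbb H^-$ and extends by uniqueness of analytic continuation once membership is secured, and you do correctly identify the analytic heart in your closing paragraph; but the specific derivation of membership you offer does not go through.
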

\begin{proof}
\begin{enumerate}[label=\rm(\arabic*),wide, labelwidth=!, labelindent=0pt]
\item  This is the content of Proposition 2.6 (i) of \cite{BCD}.
\item
 Let $D_{2-k}^{-\infty}$ be the space of holomorphic functions $f$ on $\mathbb H^{-}$ such that, for $\tau \in \mathbb H^-$, we have $f(\tau)=O(\tau_2^{-C})+O(|\tau|^C)$, for some $C>0$. Note that this is a $\Gamma$-module. Furthermore, in (d) of the discussion of Definition 1.4 of \cite{BCD}, it was shown that $\text{prj}_{2-k}^{-1}(C^{\infty}(\mathbb H^- \cup \mathbb P^1_{\mathbb R})) \subset D_{2-k}^{-\infty} $ and hence
\begin{equation}\label{incl}D^{\omega, \infty, \text{exc}}_{2-k}=D^{\omega, \infty}_{2-k} \cap D^{\omega,\text{exc}}_{2-k} \subset D^{\omega, \infty}_{2-k} \subset \text{prj}_{2-k}^{-1}\left(C^{\infty}\left(\mathbb H^- \cup \mathbb P^1_{\mathbb R}\right)\right) \subset D_{2-k}^{-\infty}.\end{equation}
On the other hand, by theorems of Knopp and Knopp--Mawi \cite[Theorem 1]{Kn}, \cite[Theorem 2.1]{KM} (in the formulation of Theorem 2.7 of \cite{BCD}), \eqref{Defr} induces an isomorphism between $S_k(\Gamma, \chi)$ and $ H^1(\Gamma, D_{2-k}^{-\infty})$. We thus have the commutative diagram
\begin{equation}\label{eqcomm}
  \begin{tikzcd}
    S_k(\Gamma, \chi) \arrow{r}{\br_k} \arrow[swap]{dr}{\cong} & H^1\left(\Gamma, D_{2-k}^{\omega, \infty, \text{exc}}\right) \arrow{d}{\iota} \\
     & H^1\left(\Gamma, D_{2-k}^{-\infty}\right)
  \end{tikzcd}
\end{equation}
 where $\iota$ is the natural map induced by the inclusion \eqref{incl}. Since $\iota \circ \br_k$ is injective, $\br_k$ is injective too and, hence, $\br_k(f)=[r_f]$ is only trivial if $f \equiv 0$.
\end{enumerate}
\vspace*{-0.5cm}
\end{proof}
\begin{remarks}\hspace{0cm}
	\begin{enumerate}[leftmargin=*,label=\rm(\arabic*)]
		\item The range of $\br_k$ in Proposition 2.6  of \cite{BCD} is a smaller space (the parabolic cohomology group), however there is no loss of information by considering the full cohomology group.
		\item The assertion of Proposition \ref{BasTh} {\rm(1)} says that
		$$\operatorname{prj}_{2-k}(r_f(\gamma))(\tau)=\int_{-\frac dc}^{i \infty}i^{2-k} f(w) \left ( \frac{i-\tau}{w-\tau} \right )^{2-k}dw \qquad \text{if $\gamma=\begin{psmallmatrix}
			a & b\\
			c & d
			\end{psmallmatrix}$}
		$$
		is holomorphic provided that the integration path avoids the branch cut defined by $\frac{i-\tau}{w-\tau}$. In particular, it is holomorphic in a $-\frac dc$-excised neighbourhood $\Psi$ (see Figure \ref{fig-sp}).
		Importantly, the function is independent of the path of integration as long as the path lies inside the neighbourhood $V_{-\frac dc}(a,  \varepsilon)$ that we excise. In addition, it says that it has an asymptotic expansion as $\tau \to -\frac{d}{c}$.
		\item Assertion of Proposition \ref{BasTh} {\rm(2)} is the injectivity part of a potential Eichler--Shimura-type isomorphism for $S_k(\Gamma, \chi)$. In view of the isomorphisms proved in \cite[Theorems B, E]{BCD}, \cite[Theorem 1]{Kn}, and \cite[Theorem 2.1]{KM}, it is unlikely that the surjectivity holds as well. Indeed, if it would, then, with \eqref{eqcomm}, we would deduce that $\iota$ is an isomorphism between
		$H^1(\Gamma, D_{2-k}^{\omega, \infty, \text{exc}})$ and $H^1(\Gamma, D_{2-k}^{-\infty})$, which cannot be expected since $D_{2-k}^{-\infty}$ is much larger than $D_{2-k}^{\omega, \infty, \text{exc}}$.
	\end{enumerate}
\end{remarks}
\subsection{Depth $2$}
\iffalse
The similarity of the structure of $F_{j, N}$ with $I_f$ becomes apparent once we express $F_{j, N}$ as an integral, as in \cite{BN}, justifying the name ``Eichler integral" in this case too.
\begin{lemma}\label{serFjN} For $\tau \in \mathbb H$ we have
$$F_{j, N}(\tau)=\int_{\tau}^{i \infty} \frac{f(w)}{\sqrt{-i(w-\tau)}}dw,$$
where the path of integration in the integral is taken to be the vertical line starting at $\tau$.
\end{lemma}
\begin{proof} We first note that since $f(\tau)$ is well-defined and the exponent of $w-\tau$ is $\frac12$, the integral is convergent for the given choice of the line of integration. Upon substituting the series expression for $f_{j, N}$ we obtain:
\begin{align*}
\int_{\tau}^{i \infty} \frac{f(w)}{\sqrt{-i(w-\tau)}}dw &= \frac{i}{2N}\hspace{-0.25cm}\sum_{\substack{n \in \mathbb Z \\ n \equiv j \hspace{-0.25cm} \pmod{2N}}}\hspace{-0.25cm} n \int_0^{i \infty} \frac{e^{-\frac{2 \pi in^2}{4N}(\tau+it)}}{\sqrt{t}}dt=\frac{i}{2N}\hspace{-0.25cm}\sum_{\substack{n \in \mathbb Z \\ n \equiv j \hspace{-0.25cm} \pmod{2N}}}\hspace{-0.25cm} n e^{-\frac{2 \pi i n^2\tau}{4N}}\sqrt{\frac\pi{\frac{2 \pi n^2}{4N}} }\\
&= \frac{i}{\sqrt{2N}}\hspace{-0.25cm}\sum_{\substack{n \in \mathbb Z \\ n \equiv j \hspace{-0.25cm} \pmod{2N}}}\hspace{-0.25cm} \frac{n}{|n|} e^{-\frac{2 \pi i n^2\tau}{4N}}=F_{j, N}(\tau).\qedhere
\end{align*}
\end{proof}
\fi

%\section{Multiple Eichler integrals}\label{mei}
%\subsection{The two-dimensional case}
To describe the case of depth $2$, we give a more general definition that we need below, however, in this subsection, apply it only to the case $n=2$.
First, with the notation of Subsection \ref{DefMain}, we identify $\otimes_{j=1}^n \DD_{2-k_j} $ (resp. $\otimes_{j=1}^n \mathcal O (\mathbb H^-)$) with its image in $\DD_{(2-k_1)+\dots+(2-k_n)}$ (resp. $\mathcal O (\mathbb H^-)$) under the map induced by
\begin{equation}\label{ident}
a_1 \otimes \dots \otimes a_n \to a_1 \cdot \dots \cdot a_n.
\end{equation}

We denote by $|_{2-k_{1}, \dots, 2-k_{n}}$ the diagonal action of $\G$ on $\otimes_{j=1}^n \DD_{2-k_j}$ (resp. $\otimes_{j=1}^n \mathcal O (\mathbb H^-)$), i.e.,
\begin{equation}\label{|m}
	f|_{2-k_{1}, \dots, 2-k_{n}} \gamma(\tau):=\chi_{1}^{-1}(\gamma) \dots \chi_{n}^{-1}(\gamma)j(\gamma, \tau)^{k_{1}-2+\dots+k_{n}-2} f(\gamma \tau)
\end{equation}
for $f\in\otimes_{j=1}^n \DD_{2-k_j}$ (resp. $\otimes_{j=1}^n \mathcal O (\mathbb H^-)$). We next show the depth $2$ analogue of Lemma \ref{BKM23}.
Following \cite{BKM}, we work in the first instance just on $\mathbb H^-$. In the final section, we see that it can be extended to a larger excised neighborhood of $\mathbb H^- \cup \mathbb P^{1}(\mathbb R)$.
\begin{proposition}\label{2dim}
	For $f_j \in S_{k_j}(\Gamma,\chi_{j})$, $j\in\{1,2\}$ set, for $\tau \in \mathbb H^-,$
$$I_{f_1, f_2}(\tau) := \int_{\overline\tau}^{i \infty} \int_{w_1}^{i \infty}\frac{f_1(w_1)f_2(w_2)}{(w_1-\tau)^{2-k_1}(w_2-\tau)^{2-k_2}}dw_2 dw_1.$$
We then have, for $\gamma\in\Gamma$
\begin{equation}\label{fin0}
	I_{f_1, f_2}|_{2-k_1, 2-k_2}(\gamma-1) = -r_{f_1, f_2}(\gamma)+r_{f_1}(\gamma)r_{f_2}(\gamma)-r_{f_2}(\gamma) I_{f_1}.
\end{equation}
\end{proposition}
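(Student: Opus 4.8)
The plan is to mimic the proof of Theorem~\ref{2dimZ} (the integer-weight case), replacing the "$\gamma_1$, $\gamma_2$" bookkeeping with the "$\overline\tau$ versus $i\infty$" bookkeeping that is characteristic of the $I$-integrals. Concretely, I would start from the left-hand side $I_{f_1,f_2}|_{2-k_1,2-k_2}\gamma$, which by definition is
$$
\chi_1^{-1}(\gamma)\chi_2^{-1}(\gamma)j(\gamma,\tau)^{k_1-2+k_2-2}\!\int_{\overline{\gamma\tau}}^{i\infty}\!\int_{w_1}^{i\infty}\!\frac{f_1(w_1)f_2(w_2)}{(w_1-\gamma\tau)^{2-k_1}(w_2-\gamma\tau)^{2-k_2}}dw_2\,dw_1,
$$
and perform the substitutions $w_2\mapsto\gamma w_2$ and then $w_1\mapsto\gamma w_1$, using the invariance identity \eqref{modul} (with $\chi=\chi_j$) to absorb the automorphy factors. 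Since $\gamma$ maps $\mathbb H^-$ to $\mathbb H^-$ and fixes $\mathbb P^1(\mathbb R)\cup\{i\infty\}$ setwise, the lower endpoint $\overline{\gamma\tau}=\gamma\overline\tau$ becomes $\overline\tau$ after the change of variable, and the upper endpoint $i\infty$ becomes $\gamma^{-1}i\infty$. This yields
$$
I_{f_1,f_2}|_{2-k_1,2-k_2}\gamma(\tau)=\int_{\overline\tau}^{\gamma^{-1}i\infty}\int_{w_1}^{\gamma^{-1}i\infty}\frac{f_1(w_1)f_2(w_2)}{(w_1-\tau)^{2-k_1}(w_2-\tau)^{2-k_2}}dw_2\,dw_1.
$$

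Next I would split each of the two integrals at the point $i\infty$, exactly as in the proof of Theorem~\ref{2dimZ}: write $\int_{\overline\tau}^{\gamma^{-1}i\infty}=\int_{\overline\tau}^{i\infty}+\int_{i\infty}^{\gamma^{-1}i\infty}$ in the outer variable and similarly $\int_{w_1}^{\gamma^{-1}i\infty}=\int_{w_1}^{i\infty}+\int_{i\infty}^{\gamma^{-1}i\infty}$ in the inner variable. This produces four double integrals. The $\int_{\overline\tau}^{i\infty}\int_{w_1}^{i\infty}$ piece is $I_{f_1,f_2}(\tau)$; the $\int_{i\infty}^{\gamma^{-1}i\infty}\int_{i\infty}^{\gamma^{-1}i\infty}$ piece factors as a product of two single integrals each from $i\infty$ to $\gamma^{-1}i\infty$, i.e.\ $\bigl(-r_{f_1}(\gamma)\bigr)\bigl(-r_{f_2}(\gamma)\bigr)=r_{f_1}(\gamma)r_{f_2}(\gamma)$; the mixed piece $\int_{\overline\tau}^{i\infty}f_1\cdots\,\int_{i\infty}^{\gamma^{-1}i\infty}f_2\cdots$ factors as $I_{f_1}(\tau)\cdot\bigl(-r_{f_2}(\gamma)\bigr)=-I_{f_1}r_{f_2}(\gamma)$; and the remaining mixed piece $\int_{i\infty}^{\gamma^{-1}i\infty}f_1(w_1)\cdots\,\int_{w_1}^{i\infty}f_2(w_2)\cdots$ equals $-r_{f_1,f_2}(\gamma)(\tau)$ by definition of $r^*_{f_1,f_2}$ (the inner integral $\int_{w_1}^{i\infty}$ with lower limit $w_1$ running along the segment from $i\infty$ to $\gamma^{-1}i\infty$ is precisely what appears in \eqref{master}, and the overall sign comes from the reversed outer limits). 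Collecting the four contributions gives $I_{f_1,f_2}(\tau)+r_{f_1}(\gamma)r_{f_2}(\gamma)-I_{f_1}r_{f_2}(\gamma)-r_{f_1,f_2}(\gamma)(\tau)$, and subtracting the untwisted $I_{f_1,f_2}(\tau)$ from both sides yields exactly \eqref{fin0}.

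\textbf{Main obstacle.} The delicate point is not the algebra of splitting but the justification that all endpoint manipulations are legitimate: that the contour from $\overline\tau$ to $\gamma^{-1}i\infty$ can be deformed to pass through $i\infty$ without crossing singularities of the integrand $(w-\tau)^{k_j-2}$ (which, for half-integral $k_j$, carries a branch cut), and that each resulting single/iterated integral converges. For $\tau\in\mathbb H^-$ this is exactly the situation already handled in \cite{BKM} and recalled in Lemma~\ref{BKM23} and Proposition~\ref{BasTh}: the relevant branch point $-d/c=\gamma^{-1}i\infty$ lies on the real line, $i\infty$ is never on the cut, and the cusp forms decay rapidly toward $i\infty$, so all four integrals converge and the deformations are valid. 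I would therefore restrict attention to $\mathbb H^-$ (as the statement does), invoke Lemma~\ref{BKM23} for the decay/convergence of the depth-$1$ pieces, and note that the depth-$2$ convergence follows by iterating the same estimate on the inner integral; the extension to a larger excised neighborhood is deferred to the final section, as the paragraph preceding the proposition indicates.
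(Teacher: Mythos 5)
Your proposal is correct and follows essentially the same route as the paper: a change of variables via \eqref{modul} to rewrite $I_{f_1,f_2}|_{2-k_1,2-k_2}\gamma$ as the iterated integral with both endpoints moved to $\gamma^{-1}i\infty$, followed by splitting the contours at $i\infty$ and identifying the resulting pieces as $I_{f_1,f_2}$, $-r_{f_1,f_2}(\gamma)$, $r_{f_1}(\gamma)r_{f_2}(\gamma)$, and $-r_{f_2}(\gamma)I_{f_1}$. The only difference is organizational: the paper imports the first stage (the change of variables plus the splitting of the outer integral, which produces the $-r_{f_2}(\gamma)I_{f_1}$ term) as a citation to Theorem 5.1 of \cite{BKM} and then only splits the inner integral, whereas you carry out the full four-term decomposition directly, which is equivalent and self-contained.
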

\begin{proof} By Theorem 5.1 of \cite{BKM}, we have that
\begin{equation*}
	I_{f_1, f_2}|_{2-k_1, 2-k_2}(\gamma-1)(\tau)
	=-\int_{\gamma^{-1}i \infty}^{i \infty} \int_{w_1}^{\gamma^{-1}i \infty}\frac{f_1(w_1)f_2(w_2)}{(w_1-\tau)^{2-k_1}(w_2-\tau)^{2-k_2}}dw_2 dw_1-r_{f_2}( \gamma)(\tau) I_{f_1}(\tau).
\end{equation*}
The double integral in the right-hand side can be written as
\begin{multline*}
	\left ( \int_{\gamma^{-1}i \infty}^{i \infty} \int_{w_1}^{i \infty}+
	\int_{\gamma^{-1}i \infty}^{i \infty} \int_{i \infty}^{\gamma^{-1}i \infty}\right ) \frac{f_1(w_1)f_2(w_2)}{(w_1-\tau)^{2-k_1}(w_2-\tau)^{2-k_2}}dw_2 dw_1 \\
	= r_{f_1, f_2}(\gamma)+ \int_{\gamma^{-1}i \infty}^{i \infty} \frac{f_1(w_1)}{(w_1-\tau)^{2-k_1}} dw_1 \, \int_{i \infty}^{\gamma^{-1}i \infty}\frac{f_2(w_2)}{(w_2-\tau)^{2-k_2}}dw_2,
\end{multline*}
which gives \eqref{fin0}.
\end{proof} Proposition
\ref{2dim} implies that $r_{f_1, f_2}$ satisfies \eqref{gen1coc} in way analogous to the way Lemma \ref{BKM23} implies the $1$-cocycle condition of $r_f$. The statement is generalised in the next section.
\begin{corollary}\label{2dimcor}
Let $k_1, k_2 \in \mathbb Z+\frac12$, $f_j \in S_{k_j}(\Gamma,\chi_{j})$ ($j\in\{1,2\}$), and $\gamma_1, \gamma_2 \in \Gamma$. We have
\begin{equation*}%\label{fin}
	r_{f_1, f_2}(\gamma_{1}\gamma_{2})- r_{f_1, f_2}(\gamma_{1})|_{2-k_1, 2-k_2}\gamma_{2}- r_{f_1, f_2}(\gamma_{2}) =r_{f_1}(\gamma_{1})|_{k_1}\gamma_{2} \, \cdot r_{f_2}(\gamma_{2}).
\end{equation*}
\end{corollary}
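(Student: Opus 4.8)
\textbf{Proof proposal for Corollary \ref{2dimcor}.}

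The plan is to derive the generalised cocycle relation from Proposition \ref{2dim} in exactly the same way that the ordinary $1$-cocycle relation for $r_f$ follows from Lemma \ref{BKM23}. Writing $F:=I_{f_1,f_2}$ and using the diagonal action $|:=|_{2-k_1,2-k_2}$, Proposition \ref{2dim} reads $F| (\gamma - 1) = -r_{f_1,f_2}(\gamma) + r_{f_1}(\gamma) r_{f_2}(\gamma) - r_{f_2}(\gamma) I_{f_1}$ for all $\gamma\in\Gamma$. First I would apply this identity at $\gamma=\gamma_1\gamma_2$, at $\gamma=\gamma_1$, and at $\gamma=\gamma_2$, and then take the combination $F|(\gamma_1\gamma_2-1) - \big(F|(\gamma_1-1)\big)|\gamma_2 - F|(\gamma_2-1)$. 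On the left-hand side the telescoping $ \gamma_1\gamma_2 - 1 - (\gamma_1-1)\gamma_2 - (\gamma_2-1) = 0$ in the group ring makes everything cancel (here one uses that $|$ is a genuine right action, which is legitimate since $D^{\omega,\text{exc}}$ and its tensor products are $\Gamma$-modules under $|_{2-k,\chi}$, cf. the remark after Proposition 1.14 of \cite{BCD}). Hence the same alternating combination of the right-hand sides must also vanish.

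Next I would unwind that vanishing combination. The terms $-r_{f_1,f_2}(\gamma_1\gamma_2) + r_{f_1,f_2}(\gamma_1)|\gamma_2 + r_{f_1,f_2}(\gamma_2)$ are exactly the negative of the left-hand side of the asserted identity. It remains to show that the surviving ``lower-depth'' pieces,
\begin{equation*}
\Big(r_{f_1}(\gamma_1\gamma_2)r_{f_2}(\gamma_1\gamma_2) - r_{f_2}(\gamma_1\gamma_2)I_{f_1}\Big) - \Big(r_{f_1}(\gamma_1)r_{f_2}(\gamma_1) - r_{f_2}(\gamma_1)I_{f_1}\Big)\Big|_{k_1}\!\gamma_2 \cdot r_{f_2}(\gamma_2) \ \text{(schematically)},
\end{equation*}
collapse to $-\,r_{f_1}(\gamma_1)|_{k_1}\gamma_2 \cdot r_{f_2}(\gamma_2)$. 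For this I would substitute the depth-$1$ cocycle relations $r_{f_i}(\gamma_1\gamma_2)=r_{f_i}(\gamma_1)|_{k_i}\gamma_2 + r_{f_i}(\gamma_2)$ (Lemma \ref{BKM23}, or \eqref{1cocy}) and the identity $I_{f_i}|_{2-k_i,\chi_i}(1-\gamma)=r_{f_i}(\gamma)$, and keep track of how the diagonal action $|_{2-k_1,2-k_2}\gamma_2$ distributes over the product decomposition \eqref{ident}: on a product $a\cdot b$ with $a$ of weight $2-k_1$-type and $b$ of weight $2-k_2$-type it acts as $(a|_{2-k_1,\chi_1}\gamma_2)\cdot(b|_{2-k_2,\chi_2}\gamma_2)$, and $I_{f_1}|_{2-k_1}\gamma_2 = I_{f_1} - r_{f_1}(\gamma_2)$. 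Expanding $r_{f_1}(\gamma_1\gamma_2)r_{f_2}(\gamma_1\gamma_2)$ by bilinearity and cancelling in pairs should leave precisely the single cross term $-(r_{f_1}(\gamma_1)|_{k_1}\gamma_2)\cdot r_{f_2}(\gamma_2)$, which rearranges to the claimed relation.

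The main obstacle is purely bookkeeping: making sure the several weight-$k$ versus weight-$(4-k_1-k_2)$ slash actions are matched correctly across the product identification \eqref{ident}, in particular that $(\,\cdot\,|_{4-k_1-k_2}\gamma_2)$ applied to a product equals the product of the individual slashes $|_{k_1}\gamma_2$ and $|_{k_2}\gamma_2$ up to the automorphy factors — this is the only place an error could creep in, and it is exactly the computation already done implicitly in the integral-weight Theorem \ref{2dimZ}. Alternatively, and perhaps more cleanly, one can bypass the algebra entirely and reprove the corollary by the contour-splitting argument of Theorem \ref{2dimZ} verbatim: change variables $w_j\mapsto\gamma_2^{-1}w_j$ in $r_{f_1,f_2}(\gamma_1)|_{2-k_1,2-k_2}\gamma_2$ using \eqref{modul} with the multipliers $\chi_j$, split the resulting double integral over $[(\gamma_1\gamma_2)^{-1}i\infty,\gamma_2^{-1}i\infty]\times[w_1,\gamma_2^{-1}i\infty]$ into four pieces via $\int_a^c=\int_a^b+\int_b^c$ with $b=i\infty$, and identify the pieces as $r_{f_1,f_2}(\gamma_1\gamma_2)$, a product term, $r_{f_1,f_2}(\gamma_2)$, and another product term, finishing with the depth-$1$ cocycle relation for $r_{f_1}$. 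I would present the derivation from Proposition \ref{2dim} as the main argument since it emphasises the structural parallel with Lemma \ref{BKM23}, and the only care needed is the convergence/path-independence on $\mathbb{H}^-$, which is automatic there.
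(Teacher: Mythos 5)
Your proposal is correct and follows essentially the same route as the paper: the paper likewise applies $|_{2-k_1,2-k_2}\gamma_2$ to the $\gamma=\gamma_1$ instance of \eqref{fin0}, uses the group-ring identity $(\gamma_1-1)\gamma_2=(\gamma_1\gamma_2-1)-(\gamma_2-1)$ to compare with the $\gamma_1\gamma_2$ and $\gamma_2$ instances, and then cleans up the lower-depth products via the depth-$1$ cocycle relation \eqref{1cocy} and Lemma \ref{BKM23}. The cancellation you describe (leaving only the single cross term $r_{f_1}(\gamma_1)|\gamma_2\cdot r_{f_2}(\gamma_2)$) is exactly what the paper's computation in \eqref{2dima}--\eqref{2dimb} produces.
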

\begin{proof}
Lemma \ref{BKM23} and \eqref{fin0} imply that
\noindent
\begin{align}\label{2dima}
&I_{f_1, f_2}|_{2-k_1, 2-k_2}(\gamma_{1}-1)|_{2-k_1, 2-k_2}\gamma_{2}\\
&=- r_{f_1, f_2}(\gamma_{1})|_{2-k_1, 2-k_2}\gamma_{2}+r_{f_1}(\gamma_1)|_{2-k_1}\gamma_{2} \, \cdot r_{f_2}(\gamma_{1})|_{2-k_2}\gamma_{2}-r_{f_2}(\gamma_{1})|_{2-k_2}\gamma_{2} \, \cdot I_{f_1}|_{2-k_1}\gamma_{2}\nonumber\\
&=- r_{f_1, f_2}(\gamma_{1})|_{2-k_1, 2-k_2}\gamma_{2}+r_{f_1}(\gamma_{1})|_{2-k_2}\gamma_{2} \, \cdot r_{f_2}(\gamma_{1})|_{2-k_1}\gamma_{2}-(r_{f_2}(\gamma_{1}\gamma_{2})-r_{f_2}(\gamma_{2})) \, \cdot I_{f_1}|_{2-k_1}\gamma_{2},\nonumber
\end{align}
where in the last line, we use \eqref{1cocy}. On the other hand, by Proposition \ref{2dim} with $\gamma_{1}\gamma_{2}$ and $\gamma_{2}$ instead of $\gamma_{1}$, we obtain
\begin{align}\label{2dimb}
&I_{f_1, f_2}|_{2-k_1, 2-k_2}(\gamma_{1}-1)\gamma_{2}=I_{f_1, f_2}|_{2-k_1, 2-k_2}(\gamma_{1}\gamma_{2}-1)-I_{f_1, f_2}|_{2-k_1, 2-k_2}(\gamma_{2}-1)\\
&=- r_{f_1, f_2}(\gamma_{1}\gamma_{2})+ r_{f_1, f_2}(\gamma_{2})+r_{f_1}(\gamma_{1}\gamma_{2}) \cdot r_{f_2}(\gamma_{1}\gamma_{2})-r_{f_1}(\gamma_{2}) r_{f_2}(\gamma_{2})-(r_{f_2}(\gamma_{1}\gamma_{2})-r_{f_2}(\gamma_{2})) I_{f_1}.\nonumber
\end{align}
Subtracting equation \eqref{2dima} from \eqref{2dimb}, we deduce that
\begin{multline*}
r_{f_1, f_2}(\gamma_{1}\gamma_{2})- r_{f_1, f_2}(\gamma_{1})|_{2-k_1, 2-k_2}\gamma_{2}- r_{f_1, f_2}(\gamma_{2})=r_{f_1}(\gamma_{1}\gamma_{2}) \cdot r_{f_2}(\gamma_{1}\gamma_{2})-r_{f_1}(\gamma_{2}) \cdot r_{f_2}(\gamma_{2})\\
-r_{f_1}(\gamma_{1})|_{2-k_2}\gamma_{2}\cdot r_{f_2}(\gamma_{1})|_{2-k_1}\gamma_{2}-(r_{f_2}(\gamma_{1}\gamma_{2})-r_{f_2}(\gamma_{2})) \cdot (I_{f_1}-I_{f_1}|_{2-k_1}\gamma_{2}).
\end{multline*}
Using \eqref{1cocy}, we derive Corollary \ref{2dimcor}.
\end{proof}

\section{Higher depths and a cohomological interpretation}\label{higher}
In this section we first prove the analogue of the cocycle condition for iterated integrals (both for integral and for half-integral weights). The form of that condition motivates the cohomological framework which we define and study in the following subsection.

\subsection{Higher depth period integrals}\label{higherper} In this subsection, we work jointly in integral and half-integral weights.
Let $\Gamma=\Gamma_0(N)$, $k_j \in \frac12\mathbb Z$, and $\chi_j$ a multiplier system of weight $k_j$ ($j\in\{1, \dots, n\}$). If $k_j \in \mathbb Z+\frac12$, then $4|N$ and, if $k_j \in 2\mathbb N$, then $\chi_j=1$. For $f_j \in S_{k_j}(\Gamma,\chi_j)$, we consider the \emph{period integrals of depth $n$}
\begin{equation*}%\label{higherperiod}
r_{f_1, \dots f_n}(\gamma)(\tau):=r^*_{f_1, \dots f_n}(k_1-2, \dots, k_n-2)(\gamma)(\tau)
=\int_{\gamma^{-1}i\infty}^{i \infty} \int_{w_1}^{i \infty} \dots \int_{w_{n-1}}^{i \infty} \prod_{j=1}^n\frac{f_j(w_j)}{(w_j-\tau)^{2-k_j}} dw_n \dots dw_1
\end{equation*}
initially defined in $\mathbb H^-$. The following theorem generalizes Theorem \ref{2dimZ} and Corollary \ref{2dimcor}.
\begin{theorem}\label{propfinn}
	For $0\le j\le n-2$, there exist $\sigma_j: \Gamma_0(N) \to \mathcal O(\mathbb H^-)$, such that, for each $\gamma_{1}, \gamma_{2} \in \Gamma_0(N)$, we have
	\begin{equation*}%\label{1coc}
		r_{f_1, \dots, f_n}(\gamma_{1}\gamma_{2})-r_{f_1, \dots, f_n}( \gamma_{1})|_{2-k_1, \dots, 2-k_n}\gamma_{2}- r_{f_1, \dots, f_n}(\gamma_{2}) = \sum_{j=1}^{n-1} r_{f_1, \dots, f_{j}}(\gamma_{1})|_{2-k_1, \dots, 2-k_{j}}\gamma_{2} \cdot  \sigma_j(\gamma_{2}).
	\end{equation*}
The map $\sigma_j(\gamma)$ can be expressed explicitly as
\begin{equation*}
\sigma_j(\gamma)=\sum_{\ell \in \mathbb N} \sum_{(I_1, I_2, \dots I_\ell)} (-1)^\ell r_{I_1}\left(\gamma^{-1}\right)|_{2-I_1}\gamma \cdot r_{I_2}\left(\gamma^{-1}\right)|_{2-I_2}\gamma  \dots r_{I_\ell}\left(\gamma^{-1}\right)|_{2-I_\ell}\gamma,
	\end{equation*}
where $(I_1, I_2, \dots I_\ell)$ is a vector of sub-vectors partitioning $(j+1, \dots, n)$ and
$|_{2-I_m}$ stands for $|_{2-k_{j_1}, 2-k_{j_2}, \dots}$, if $I_m=(j_1, j_2, \dots).$
\end{theorem}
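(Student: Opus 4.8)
The plan is to follow the same strategy that established \Cref{2dimZ} and \Cref{2dimcor}, namely to decompose each path of integration $\gamma_2^{-1}i\infty \to i\infty$ (or $\gamma_1^{-1}i\infty \to \gamma_2^{-1}i\infty$ after a change of variables) into the concatenation $\gamma_2^{-1}i\infty \to i\infty$ together with $i\infty \to \gamma_2^{-1}i\infty$, and to expand the resulting nested integral into a sum of products of shorter iterated integrals. First I would apply the change of variables $w_m \mapsto \gamma_2 w_m$ in each variable together with \eqref{modul} (with $\chi=\chi_m$, $\gamma=\gamma_2$, and weight $k_m$) to rewrite $r_{f_1,\dots,f_n}(\gamma_1)|_{2-k_1,\dots,2-k_n}\gamma_2(\tau)$ as
\begin{equation*}
\int_{(\gamma_1\gamma_2)^{-1}i\infty}^{\gamma_2^{-1}i\infty}\int_{w_1}^{\gamma_2^{-1}i\infty}\dots\int_{w_{n-1}}^{\gamma_2^{-1}i\infty}\prod_{j=1}^n\frac{f_j(w_j)}{(w_j-\tau)^{2-k_j}}\,dw_n\dots dw_1,
\end{equation*}
so that the only difference from $r_{f_1,\dots,f_n}(\gamma_1\gamma_2)(\tau)$ is that the upper endpoint of each integral is $\gamma_2^{-1}i\infty$ instead of $i\infty$. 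Writing $\int_{w_{j-1}}^{\gamma_2^{-1}i\infty}=\int_{w_{j-1}}^{i\infty}+\int_{i\infty}^{\gamma_2^{-1}i\infty}$ in each of the $n$ variables and expanding, I obtain a sum over subsets $S\subseteq\{1,\dots,n\}$ of nested integrals in which the variables indexed by $S$ run over $[i\infty,\gamma_2^{-1}i\infty]$ and the rest over their natural ranges up to $i\infty$. Because the integrand factorises as a product $\prod_j f_j(w_j)(w_j-\tau)^{k_j-2}$, each such term splits as a product over the ``blocks'' of consecutive indices, and the blocks lying entirely in $S$ contribute factors of the form $r_{f_{a}}(\gamma_2)\cdots$ assembled into the integrals $r_{I}(\gamma_2^{-1})|\gamma_2$ appearing in the statement, while the leading block (containing index $1$, when $1\notin S$) contributes $r_{f_1,\dots,f_j}(\gamma_1\gamma_2)$ for the appropriate $j$.

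The second step is bookkeeping: after this expansion the term with $S=\emptyset$ gives $r_{f_1,\dots,f_n}(\gamma_1\gamma_2)(\tau)$, and grouping the remaining terms according to the largest index $j$ such that $\{1,\dots,j\}\cap S=\emptyset$ (i.e.\ the length of the initial unbroken block) produces a factor $r_{f_1,\dots,f_j}(\gamma_1\gamma_2)(\tau)$ times a sum over all ways of partitioning the tail $(j+1,\dots,n)$ into consecutive sub-blocks, each contributing a signed product of shorter integrals from $\gamma_2^{-1}i\infty$ to $i\infty$. This last sum is precisely the asserted $\sigma_j(\gamma_2)$ once one uses the $1$-cocycle relation \eqref{1cocy} for the single-form periods $r_{f_i}$ to convert $r_{f_i}(\gamma_1\gamma_2)$-type leftovers into $r_{f_i}(\gamma_1)|\gamma_2$ plus $r_{f_i}(\gamma_2)$ (exactly as in the proof of \Cref{2dimZ}, where this step turned $r_{f_1}(\gamma_1\gamma_2)r_{f_2}(\gamma_2)$ into $r_{f_1}(\gamma_1)|_{k_1}\gamma_2\cdot r_{f_2}(\gamma_2)$ after cancellation). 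To get the explicit closed form for $\sigma_j(\gamma)$ in terms of $r_{I_m}(\gamma^{-1})|\gamma$, I would observe that $\int_{i\infty}^{\gamma_2^{-1}i\infty}=-\int_{\gamma_2^{-1}i\infty}^{i\infty}$ accounts for the signs $(-1)^\ell$, and that applying \eqref{modul} once more (now with $\gamma=\gamma_2^{-1}$, so that $\gamma_2^{-1}i\infty\to i\infty$ becomes $i\infty\to\gamma_2 i\infty$ ... ) rewrites each block integral from $\gamma_2^{-1}i\infty$ to $i\infty$ as $r_{I_m}(\gamma_2^{-1})|_{2-I_m}\gamma_2$; summing over all ordered partitions $(I_1,\dots,I_\ell)$ of $(j+1,\dots,n)$ into sub-vectors gives the stated formula.

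I expect the main obstacle to be the combinatorial identification: verifying that, after the full expansion into $2^n$ terms and the regrouping by initial block length, the coefficient of $r_{f_1,\dots,f_j}(\gamma_1)|_{2-k_1,\dots,2-k_j}\gamma_2$ is \emph{exactly} the sum over ordered set partitions with the signs $(-1)^\ell$, with no residual terms — in other words, checking that the application of the depth-$1$ cocycle relation \eqref{1cocy} produces precisely the cancellations needed so that the final right-hand side depends on $\gamma_1$ only through $r_{f_1,\dots,f_j}(\gamma_1)|\gamma_2$ and on $\gamma_2$ only through the $\sigma_j(\gamma_2)$. The cleanest way to handle this is by induction on $n$: assuming the theorem for all depths $<n$, one applies it inside the block structure and reduces the general combinatorial claim to the single identity that a consecutive interval $(j+1,\dots,n)$ has its ordered compositions indexed correctly, which is the standard recursion for compositions. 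A careful statement of the block decomposition and an induction on $n$ should make the signs and the partition sum fall out, with the analytic input (convergence on $\mathbb H^-$, path-independence, and \eqref{modul}) being entirely routine and identical to the depth-$2$ case.
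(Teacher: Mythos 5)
Your overall strategy --- the change of variables via \eqref{modul}, the splitting of integration paths at $i\infty$, and an induction on $n$ --- is the same as the paper's, but you organize the expansion differently: you split all $n$ integrals simultaneously into $2^n$ terms indexed by subsets $S\subseteq\{1,\dots,n\}$ and then regroup, whereas the paper splits one integral at a time from the outside in, so that at each stage exactly one factor $\left(r_{f_1,\dots,f_j}(\gamma_1\gamma_2)-r_{f_1,\dots,f_j}(\gamma_2)\right)\cdot r_{f_{j+1},\dots,f_n}\!\left(\gamma_2^{-1}\right)\!|_{2-k_{j+1},\dots,2-k_n}\gamma_2$ splits off and a single remainder integral is carried forward. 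The sequential telescoping avoids your self-identified ``main obstacle'' entirely: no sum over ordered compositions ever needs to be matched term by term, and the closed formula for $\sigma_j$ falls out of the recursion obtained by applying the inductive hypothesis --- note: the full theorem at depth $j<n$, not merely the depth-$1$ relation \eqref{1cocy} as your second paragraph suggests --- to $r_{f_1,\dots,f_j}(\gamma_1\gamma_2)-r_{f_1,\dots,f_j}(\gamma_2)$.

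The genuine gap in your version is the final identification step. In your $2^n$-fold expansion, a block $I_m=(a,\dots,b)$ (meaning $a\in S$ and $a+1,\dots,b\notin S$) contributes the integral $\int_{i\infty}^{\gamma_2^{-1}i\infty}\int_{w_a}^{i\infty}\cdots\int_{w_{b-1}}^{i\infty}$, that is, $-r_{f_a,\dots,f_b}(\gamma_2)$, in which all \emph{inner} upper endpoints are $i\infty$. For $b>a$ this is \emph{not} equal to $\pm\, r_{I_m}(\gamma_2^{-1})|_{2-I_m}\gamma_2$: by \eqref{|N} the latter is $\int_{i\infty}^{\gamma_2^{-1}i\infty}\int_{w_a}^{\gamma_2^{-1}i\infty}\cdots$, with all upper endpoints at $\gamma_2^{-1}i\infty$, and no change of variables or further appeal to \eqref{modul} converts one into the other; for instance $r_{f,g}(\gamma^{-1})|\gamma=r_{g,f}(\gamma)=-r_{f,g}(\gamma)+r_f(\gamma)\,r_g(\gamma)$, which differs from $-r_{f,g}(\gamma)$ by a product of depth-$1$ periods. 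The two resulting candidate expressions for $\sigma_j$ do agree in the end, but only because of the path-reversal (antipode) identity $r_{f_{j+1},\dots,f_n}(\gamma^{-1})|\gamma=\sum_{\ell}\sum_{(I_1,\dots,I_\ell)}(-1)^{\ell}\prod_m r_{I_m}(\gamma)$, summed over ordered partitions of $(j+1,\dots,n)$ --- which is precisely the combinatorial identity your sketch presupposes rather than proves. Either establish that identity separately (it is true, by the same shuffle/reversal argument in each block), or adopt the one-at-a-time splitting, which never produces the problematic product blocks and yields the stated formula for $\sigma_j$ directly from the recursion.
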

\begin{proof}
	We prove the claim by induction on $n$. For $n=2$, the statement is Corollary \ref{2dimcor}.  A change of variables in the integral in terms of $w_n$ combined with \eqref{modul}, imply
	\begin{multline*}
		r_{f_1, \dots, f_n}(\gamma_{1})|_{2-k_1, \dots, 2-k_n}\gamma_{2}(\tau) = \int_{\gamma_{1}^{-1}i \infty}^{i \infty} \int_{w_1}^{i \infty} \dots \int_{w_{n-1}}^{i \infty} \prod_{j=1}^n\frac{\chi_j^{-1}(\gamma_{2}) f_j(w_j)}{(w_j-\gamma_{2}\tau)^{2-k_j}(\gamma_2, \tau)^{2-k_j}} dw_n \dots dw_1  \\
		=\int_{\gamma_1^{-1}i \infty}^{i \infty} \int_{w_1}^{i \infty} \dots \int_{w_{n-2}}^{i \infty} \prod_{j=1}^{n-1}\frac{\chi_j^{-1}(\gamma_2) f_j(w_j)}{(w_j-\gamma_{2}\tau)^{2-k_j}(\gamma_2, \tau)^{2-k_j}}\int_{\gamma_{2}^{-1}w_{n-1}}^{\gamma_2^{-1} i \infty} \frac{f_n(w_n)}{(w_n-\tau)^{2-k_n}}dw_n \dots dw_1.
	\end{multline*}
	Iteratively the changes of variables $w_j \mapsto \gamma_2^{-1}w_j$ as $j$ ranges from $n-1$ to $1$, we deduce that
	\begin{equation}\label{|N}
		r_{f_1, \dots, f_n}(\gamma_{1})|_{2-k_1, \dots, 2-k_n}\gamma_{2}(\tau) = \int_{(\gamma_1 \gamma_{2})^{-1}i \infty}^{\gamma_2^{-1}i \infty} \int_{w_1}^{\gamma_{2}^{-1}i \infty} \dots \int_{w_{n-1}}^{\gamma_{2}^{-1}i \infty} F(\bm{w})\bm{dw},
	\end{equation}
	where
	\begin{equation*}
		\bm{dw}:=dw_{n}\dots dw_{1}, \quad F(\bm{w}):=\prod_{j=1}^n\frac{ f_j(w_j)}{(w_j-\tau)^{2-k_j}}.
	\end{equation*}
	By splitting the second integral in \eqref{|N} we see
	\begin{align}
		&\hspace{-1cm}r_{f_1, \dots, f_n}(\gamma_{1})|_{2-k_1, \dots, 2-k_n}\gamma_{2} (\tau) \nonumber\\
		&= \left ( \int_{(\gamma_{1}\gamma_{2})^{-1}i \infty}^{\gamma_{2}^{-1}i \infty} \int_{w_1}^{i \infty}
		\int_{w_{2}}^{\gamma_{2}^{-1}i \infty}  \dots \int_{w_{n-1}}^{\gamma_2^{-1}i \infty}
		+\int_{(\gamma_{1}\gamma_{2})^{-1}i \infty}^{\gamma_{2}^{-1}i \infty} \int_{i \infty}^{\gamma_{2}^{-1}i \infty} \dots \int_{w_{n-1}}^{\gamma_{2}^{-1}i \infty} \right ) F(\bm{w})\bm{dw}\nonumber\\
		&= \int_{(\gamma_{1}\gamma_{2})^{-1}i \infty}^{\gamma_{2}^{-1}i \infty} \int_{w_1}^{i \infty}
		\int_{w_{2}}^{\gamma_{2}^{-1}i \infty} \dots \int_{w_{n-1}}^{\gamma_{2}^{-1}i \infty} F(\bm{w})\bm{dw} \nonumber\\
&\qquad \qquad  \qquad \qquad +\left(r_{f_1}(\gamma_{1}\gamma_{2})(\tau)-r_{f_1}(\gamma_{2})(\tau)\right)
r_{f_2, \dots, f_n}\left(\gamma_2^{-1}\right)|_{2-k_1, \dots 2-k_n}\gamma_2(\tau)
,\label{w1}
	\end{align}
since, by \eqref{|N},
	$$r_{f_m, \dots, f_n}\left(\gamma^{-1}\right)|_{2-k_m, \dots 2-k_n}\gamma (\tau)=\int_{i \infty}^{\gamma^{-1}i \infty} \dots \int_{w_{n-1}}^{\gamma^{-1}i \infty} F(\bm{w})\bm{dw}.$$
	The multiple integral on the right-hand side equals
	\begin{multline*}
		\left ( \int_{(\gamma_{1}\gamma_2)^{-1}i \infty}^{\gamma_{2}^{-1}i \infty} \int_{w_1}^{i \infty} \int_{w_2}^{i \infty} \dots \int_{w_{n-1}}^{\gamma_{2}^{-1}i \infty}
		+\int_{(\gamma_{1}\gamma_{2})^{-1}i \infty}^{\gamma_{2}^{-1}i \infty} \int_{w_1}^{i \infty} \int_{i \infty}^{\gamma_{2}^{-1}i \infty} \dots \int_{w_{n-1}}^{\gamma_{2}^{-1}i \infty} \right ) F(\bm{w})\bm{dw} \\
		= \int_{(\gamma_{1}\gamma_{2})^{-1}i \infty}^{\gamma_{2}^{-1}i \infty} \int_{w_1}^{i \infty} \int_{w_2}^{i \infty} \int_{w_{3}}^{\gamma_2^{-1}i \infty}  \dots \int_{w_{n-1}}^{\gamma_{2}^{-1}i \infty}F(\bm{w})\bm{dw}\\
\qquad \qquad \qquad \qquad +\left(r_{f_1, f_2}(\gamma_{1}\gamma_{2})(\tau)-r_{f_1, f_2}(\gamma_{2})(\tau)\right) r_{f_3, \dots, f_n}\left(\gamma_2^{-1}\right)|_{2-k_3, \dots 2-k_n}\gamma_2(\tau).
	\end{multline*}
Plugging into \eqref{w1}, we obtain
	\begin{multline*}
		\int_{(\gamma_{1}\gamma_{2})^{-1}i \infty}^{\gamma_{2}^{-1}i \infty} \int_{w_1}^{i \infty} \int_{w_2}^{i \infty} \int_{w_{3}}^{\gamma_2^{-1}i \infty}  \dots \int_{w_{n-1}}^{\gamma_{2}^{-1}i \infty}F(\bm{w})\bm{dw}
\\
+\sum_{j=1}^2\left(r_{f_1, \dots, f_{j}}(\gamma_{1}\gamma_{2})(\tau)-r_{f_1, \dots, f_{j}}(\gamma_{2})(\tau)\right)r_{f_{j+1}, \dots, f_n}\left(\gamma_2^{-1}\right)|_{2-k_{j+1}, \dots 2-k_n}\gamma_2(\tau).
	\end{multline*}
	Continuing in this way we deduce, using the inductive hypothesis,
	\begin{align*}
		r_{f_1, \dots, f_n}(\gamma_{1})&|_{2-k_1, \dots, 2-k_n}\gamma_{2}(\tau)
		=\int_{(\gamma_{1}\gamma_{2})^{-1}i \infty}^{\gamma_{2}^{-1}i \infty} \int_{w_1}^{i \infty}  \dots \int_{w_{n-1}}^{i \infty} F(\bm{w})\bm{dw}\\
&+ \sum_{j=1}^{n-1}\left(r_{f_1, \dots, f_{j}}(\gamma_{1}\gamma_{2})-r_{f_1, \dots, f_{j}}(\gamma_{2})\right)
r_{f_{j+1}, \dots, f_n}\left(\gamma_2^{-1}\right)|_{2-k_{j+1}, \dots 2-k_n}\gamma_2(\tau)
\\
		&=r_{f_1, \dots, f_n}(\gamma_{1}\gamma_{2})-r_{f_1, \dots, f_n}(\gamma_{2})-\sum_{j=1}^{n-1}r_{f_1, \dots, f_{j}}(\gamma_{1})|_{2-k_1, \dots, 2-k_j}\gamma_{2} \, \cdot \sigma_j(\gamma_2)(\tau).\qedhere
	\end{align*}
\end{proof}
\subsection{Cohomological interpretation}\label{coh}
Theorem \ref{propfinn} motivates the following inductive definition.
\begin{definition}\label{mainDef}
	For $j\in\N$, let $M_{j}$ be right $G$-modules. We set $Z_{(0)}^1(G, 0)=L_{0}^m=0$ for $m \in \mathbb N_0$.
Suppose that, for all non-negative integers $r < k$, a space $L_{(r)}^m \subset\oplus_{j=1}^{r-1} ( C^1(G, \otimes_{\ell=1}^j M_\ell) \otimes C^{m-1}(G, \otimes_{\ell=j+1}^r M_\ell) )$ and a space $Z^1_{(r)}(G, \otimes_{j=1}^r M_j) \subset C^1\left(G, \otimes_{\ell=1}^{r}M_{\ell} \right)\big/\mu(L_{(r)}^1)$, where $\mu$ is the map \eqref{mu}, are defined for all $m \in \mathbb N_0$. Then, for $r < k$, let
\begin{equation}\label{natpr}\pi_{(r)}:C^1\left(G, \otimes_{\ell=1}^{r}M_{\ell}\right) \to C^1\left(G, \otimes_{\ell=1}^{r}M_{\ell} \right)\Big/\mu\left(L_{(r)}^1\right)
\end{equation}
be the natural projection. Set
$$ L_{(k)}^{m}:=\bigoplus_{j=1}^{k-1} \left( \pi_{(j)}^{-1}Z_{(j)}^1\left(G, \otimes_{\ell=1}^jM_\ell\right) \otimes C^{m-1}\left(G, \otimes_{\ell=j+1}^{k} M_{\ell}\right)\right).$$
Let
\begin{equation*}
d_{(k)}^m: C^{m}\left(G, \otimes_{j=1}^{k} M_j\right) \Big/ \mu\left(L_{(k)}^{m}\right) \longrightarrow C^{m+1}\left(G, \otimes_{j=1}^{k} M_j\right) \Big/ \mu\left(L^{m+1}_{(k)}\right)
\end{equation*}
be the map induced by $d^m: C^{m}(G, \otimes_{j=1}^{k} M_j)   \rightarrow C^{m+1}(G, \otimes_{j=1}^{k} M_j).$
Furthermore,
\begin{align*}
	&Z_{(k)}^m(G; \otimes_{j=1}^k M_j)=\operatorname{ker}\left( d^m_{(k)}\right ), \qquad B_{(k)}^m(G; \otimes_{j=1}^k M_j)=\operatorname{im}\left( d^{m-1}_{(k)}\right ) , \\
	&H_{(k)}^m\left(G; \otimes_{j=1}^k M_j\right)=Z_{(k)}^m\left(G; \otimes_{j=1}^k M_j\right)\Big/B_{(k)}^m\left(G; \otimes_{j=1}^k M_j\right).
\end{align*}
We call $Z_{(k)}^m$ the {\it group of $m$-th cocycles of depth $k$}. Likewise, for the depth $k$ coboundary and cohomology groups $Z_{(k)}^m$ and $H_{(k)}^m$, respectively.
\end{definition}
To show that this is well-defined we use the following lemma.
\begin{lemma}\label{comm} With the notation of Definition \ref{mainDef} we have, for $k, m \in \N_{0}$,
$$d^{m}\left(\mu\left(L_{(k)}^m\right)\right) \subset \mu\left(L_{(k)}^{m+1}\right).$$
In particular, the map $d_{(k)}^m$ is well-defined.
\end{lemma}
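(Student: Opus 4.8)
The plan is to unwind the inductive definition of $L_{(k)}^m$ and reduce the claim to the Leibniz rule for the cup product differential together with the inductive hypothesis that $\pi_{(j)}^{-1}Z_{(j)}^1$ behaves well under $d$. Recall that by \eqref{differential}, $d^m$ is the usual bar-resolution differential, and a standard computation (see \cite[Subsection V.3]{Br}) shows that for $\phi_1\in C^{1}(G,M)$ and $\phi_2\in C^{m-1}(G,L)$ one has, up to signs,
$$d^{m}(\phi_1\cup\phi_2)=(d^{1}\phi_1)\cup\phi_2\pm\phi_1\cup(d^{m-1}\phi_2),$$
and $\mu$ intertwines $\cup$ with the multiplication map \eqref{ident}. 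So $d^m\circ\mu=\mu\circ(d\otimes 1\pm 1\otimes d)$ on each summand of $L_{(k)}^m$.

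First I would fix a summand $\pi_{(j)}^{-1}Z_{(j)}^1(G,\otimes_{\ell=1}^jM_\ell)\otimes C^{m-1}(G,\otimes_{\ell=j+1}^{k}M_\ell)$ of $L_{(k)}^m$, take a generator $\phi_1\otimes\phi_2$ with $\phi_1\in\pi_{(j)}^{-1}Z_{(j)}^1$, and compute $d^m\mu(\phi_1\otimes\phi_2)$ via the Leibniz formula above. The second term $\phi_1\cup(d^{m-1}\phi_2)$ lands in $\pi_{(j)}^{-1}Z_{(j)}^1\otimes C^{m}(G,\otimes_{\ell=j+1}^{k}M_\ell)$ after applying $\mu$, which is exactly (a piece of) $\mu(L_{(k)}^{m+1})$, so that term is fine. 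The first term $(d^1\phi_1)\cup\phi_2$ is where the work is: I need $d^1\phi_1$, for $\phi_1\in\pi_{(j)}^{-1}Z_{(j)}^1(G,\otimes_{\ell=1}^jM_\ell)$, to be expressible modulo $\mu(L_{(j)}^2)$ in a way that, after cupping with $\phi_2\in C^{m-1}$ and applying $\mu$, still lies in $\mu(L_{(k)}^{m+1})$. Since $\phi_1\in\pi_{(j)}^{-1}Z_{(j)}^1$ means $\pi_{(j)}(\phi_1)\in Z_{(j)}^1=\ker(d^1_{(j)})$, we get $d^1\phi_1\in\mu(L_{(j)}^2)$; then one expands $\mu(L_{(j)}^2)$ via its definition, cups with $\phi_2$, and checks termwise — using associativity of the multiplication map \eqref{ident} and the partition bookkeeping — that every resulting term is a product $\mu(\psi_1\otimes\psi_2)$ with $\psi_1\in\pi_{(i)}^{-1}Z_{(i)}^1(G,\otimes_{\ell=1}^iM_\ell)$ for some $i<k$ and $\psi_2\in C^m(G,\otimes_{\ell=i+1}^kM_\ell)$, i.e. an element of $L_{(k)}^{m+1}$.

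The main obstacle is precisely this last bookkeeping step: one must verify that the nested cup products coming from $d^1\phi_1\in\mu(L_{(j)}^2)$ reassemble correctly against the remaining factor $\phi_2$ on the blocks $\{1,\dots,i\}$, $\{i+1,\dots,j\}$, $\{j+1,\dots,k\}$, and that associativity of \eqref{ident} together with the sign conventions in the cup product formula make the two bracketings agree. It is a formal but slightly delicate induction on $k$ (with $m$ as a passive parameter), with the base case $k=1$ trivial since $L_{(1)}^m=0$ and $d^m\mu(L_{(1)}^m)=0\subset\mu(L_{(1)}^{m+1})$. Once $d^m(\mu(L_{(k)}^m))\subset\mu(L_{(k)}^{m+1})$ is established, the induced map $d_{(k)}^m$ on the quotients $C^m/\mu(L_{(k)}^m)\to C^{m+1}/\mu(L_{(k)}^{m+1})$ is well-defined, which is the assertion of the lemma.
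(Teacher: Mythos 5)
Your proposal is correct and follows essentially the same route as the paper: induction on $k$, the Leibniz rule $d^{m}(\sigma_1\cup\sigma_2)=d^{1}\sigma_1\cup\sigma_2-\sigma_1\cup d^{m-1}\sigma_2$, the observation that $d^1\sigma_1\in\mu(L_{(j)}^2)$ by the inductive well-definedness of $d^1_{(j)}$, and the regrouping $(\phi_\ell\cup\varrho_\ell)\cup\sigma_2=\phi_\ell\cup(\varrho_\ell\cup\sigma_2)$ to land in $\mu(L_{(k)}^{m+1})$. The ``bookkeeping'' you flag as the main obstacle is exactly this one-line associativity step in the paper, so nothing substantive is missing.
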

\begin{proof}
	We prove the claim by induction on $k$. It is trivial for $k=0$. Assume next that the claim holds for all $k<k_0$. To complete the proof, it suffices to show that, for all $1\le j\le k_0-1$, and $\sigma_1 \in \pi_j^{-1}Z_{(j)}^1(G, \otimes_{\nu=1}^j M_\nu)$, $\sigma_2 \in C^{m-1}(G, \otimes_{\nu=j+1}^{k_0}M_{\nu})$, we have
\begin{equation}\label{w-d}d^{m}(\mu(\sigma_1 \otimes \sigma_2)) \in \mu\left(L_{(k_0)}^{m+1}\right).\end{equation}
By the ``Leibniz rule" for cup products (see, e.g. \cite{Br}, page 111), we have
$$d^{m}(\mu(\sigma_1 \otimes \sigma_2))=d^{m}(\sigma_1 \cup \sigma_2)=
d^{1}(\sigma_1) \cup \sigma_2-\sigma_1 \cup d^{m-1}\sigma_2.$$
By the inductive hypothesis, $d_{(j)}^1$ is well-defined and thus, with the definition of $Z_{(j)}^1(G, \otimes _{\nu=1}^j M_\nu)$, $d^1\sigma_1 \in \mu(L_{(j)}^2)$, i.e., $d^1\sigma_1=\sum_{\ell=1}^{j-1} \phi_\ell \cup \varrho_{\ell}$ for some $\phi_\ell \in \pi_{(\ell)}^{-1}Z_{(\ell)}^1(G, \otimes_{r=1}^\ell M_r), \varrho_{\ell} \in C^1(G, \otimes_{r=\ell+1}^j M_{r})$.
Then we have that each $(\phi_\ell \cup \varrho_{\ell}) \cup \sigma_2=\phi_\ell \cup \psi_{\ell}$, for $\psi_{\ell}:=\varrho_{\ell} \cup \sigma_2 \in C^{m}(G, \otimes_{r=\ell+1}^{k_0}M_r)$, and $\sigma_1 \cup d^{m-1}\sigma_2 \in \pi_j^{-1}Z_{(j)}^1(G, \otimes_{r=1}^j M_{r}) \cup C^m(G, \otimes_{r=j+1}^{k_0} M_r$), i.e., \eqref{w-d} holds.
\end{proof}

We illustrate the nature of Definition \ref{mainDef} by writing down explicitly the defining relations of $m$-cocycles of depth $k$ and $m$-coboundaries in the case of low $k$ and $m$.

\subsubsection{Low depths.} \label{ld}\hspace{0cm}
\begin{itemize}[leftmargin=*,labelindent=0pt]
	\item $k=1$: We trivially have $L_{(1)}^m=0$ and thus $d_{(1)}^m=d^m$ (the standard differential \eqref{differential}). Therefore,
	$Z_{(1)}^{m}(G, M_1)=\operatorname{ker}(d^m)=Z^m(G, M_1),$
	$B_{(1)}^{m}(G, M_1)=\operatorname{im}(d^{m-1})=B^m(G, M_1)$, and
	$H_{(1)}^{m}(G, M_1)=H^m(G, M_1).$
	\item $k=2$: Since, trivially, $L_{(1)}^m=0$, the projection $\pi_{(1)}$ is the identity map. Hence $\pi_{(1)}^{-1}Z_{(1)}^1(G, M_1)=Z^1(G, M_1)$ and therefore, $L_{(2)}^m=\pi_{(1)}^{-1}Z_{(1)}^1(G, M_1) \otimes C^{m-1}(G, M_2)=Z^1(G, M_1) \otimes C^{m-1}(G, M_2)$.
	%because of the computations in the case $k=1$.
	This implies that
	\begin{equation*}
	d_{(2)}^m: C^{m}\left(G, M_1 \otimes M_2 \right) \big/ \mu\left(L_{(2)}^m\right) \longrightarrow C^{m+1}\left(G, M_1 \otimes M_2\right)\big/ \mu\left(L_{(2)}^{m+1} \right)
	\end{equation*}
	and
	$Z_{(2)}^{m}(G, M_1 \otimes M_2)=\operatorname{ker}(d_{(2)}^m)$,
	$B_{(2)}^{m}(G, M_1 \otimes M_2)=\operatorname{im}(d_{(2)}^{m-1})$, and $H_{(2)}^{m}(G, M_1 \otimes M_2)=Z_{(2)}^{m}(G, M_1 \otimes M_2)/B_{(2)}^{m}(G, M_1 \otimes M_2).$
\end{itemize}
\subsubsection{Low orders}\label{lo}\hspace{0cm}
\begin{itemize}[leftmargin=*,labelindent=0pt]
	\item $m=0$: Since $C^{m}=0$ if $m<0$, we have $L_{(k)}^0=0$ and thus $H^0_{(k)}(G, \otimes_{j=1}^k M_j)$ consists of
    $a \in \otimes_{j=1}^kM_j$ such that $$a \circ (g-1)=\sum_{j=1}^{k-1}\sum_{s} \varrho_{j, s}(g) \otimes \phi_{j, s}  \, \, \text{for all $g \in G$}$$
    for some $\varrho_{j, s} \in \pi_{(j)}^{-1}Z_{(j)}^1(G, \otimes_{n=1}^j M_n)$ and $\phi_{j, s} \in \otimes_{n=j+1}^{k} M_n$.
	\item $m=1$:
	The space $Z^1_{(k)}(G; \otimes_{j=1}^k M_j)$ consists of classes $[\sigma] \in C^{1}(G, \otimes_{j=1}^kM_j ) / \mu(L_{(k)}^1)$ such that $\sigma: G \to \otimes_{j=1}^kM_j$ satisfies
	\begin{equation}\label{Z1}
	\sigma(g_2 g_1)-\sigma(g_2) \circ g_1-\sigma(g_1)=	\sum_{j=1}^{k-1}\sum_{s} \varrho_{j, s}(g_2)\circ g_1 \otimes \phi_{j, s} (g_1)  \, \, \text{for all $g_1, g_2 \in G$}
	\end{equation}
	for some $\varrho_{j, s} \in \pi_{(j)}^{-1}Z_{(j)}^1(G, \otimes_{n=1}^j M_n)$ and $\phi_{j, s}: G \to \otimes_{n=j+1}^{k} M_n$.
\end{itemize}

\begin{remarks}\hspace{0cm}
	\begin{enumerate}[leftmargin=*,label=\rm(\arabic*)]
		\item Informally, \eqref{Z1} means that $\sigma$ satisfies the classical $1$-cocycle relation, up to cup products involving \emph{lower order} cocycles.
		\item Caution is called for regarding the actions in \eqref{Z1}: The first ``\,$\circ$" denotes the action of $G$ on $\otimes_{j=1}^kM_j$, whereas the second one is the action of $G$ on
		$\otimes_{n=1}^jM_n$. We do not introduce separate notation for each action to avoid making the notation unmanageable.
	\end{enumerate}
\end{remarks}

Furthermore, $B^1_{(k)}(G; \otimes_{j=1}^k M_j)$ contains the classes $[\sigma] \in C^1(G, \otimes_{j=1}^kM_j ) / \mu(L_{(k)}^1)$ such that $\sigma: G^1 \to \otimes_{j=1}^kM_j$ satisfies, for all $g \in G,$
\begin{equation*}%\label{B1}
\sigma(g)=\phi \circ (g-1)+
\sum_{j=1}^{k-1} \sum_r \sigma_{j, r}(g) \otimes \varrho_{j, r} ,
\end{equation*} for some $\phi \in \otimes_{j=1}^{k} M_j$ and some
$$\sum_r \sigma_{j, r} \otimes \varrho_{j, r} \in \pi_{(j)}^{-1}Z_{(j)}^1\left(G, \otimes_{n=1}^jM_n\right) \otimes C^{0}\left(G, \otimes_{n=j+1}^k M_j\right), \, \, 1\leq j \leq k.$$

More generally, $Z_{(k)}^m(G; \otimes_{j=1}^k M_j)$ consists of classes $[\sigma] \in C^m(G, \otimes_{j=1}^kM_j ) / \mu(L_{(k)}^m)$ such that $\sigma: G^m \to \otimes_{j=1}^kM_j$ satisfies
\begin{equation}\label{Zn}d^m \sigma= \sum_{j=1}^{k-1} \sum_r \mu \left ( \sigma_{j, r} \otimes \varrho_{j, r} \right ) ,
\end{equation} for some
$$\sum_r \sigma_{j, r} \otimes \varrho_{j, r} \in \pi_{(j)}^{-1} Z^1_{(j)}\left(G, \otimes_{n=1}^{j}M_{n}\right) \otimes C^{m}\left(G, \otimes_{n=j+1}^{k} M_{n}\right), \quad \,1\le j\le k-1.$$

For $m \in \N$, $B_{(k)}^m(G; \otimes_{j=1}^k M_j)$ contains the classes $[\sigma] \in C^m(G, \otimes_{j=1}^kM_j ) / \mu(L_{(k)}^m)$ such that $\sigma: G^m \to \otimes_{j=1}^kM_j$ satisfies
\begin{equation}\label{Bn} \sigma=d^{m-1}\phi+\sum_{j=1}^{k-1} \sum_r \mu \left ( \sigma_{j, r} \otimes \varrho_{j, r} \right ) ,
\end{equation} for some $\phi \in C^{m-1}(G, \otimes_{j=1}^{k} M_j)$ and
$$\sum_r \sigma_{j, r} \otimes \varrho_{j, r} \in \pi_{(j)}^{-1}Z^1_{(j)}\left(G, \otimes_{n=1}^jM_n\right) \otimes C^{m-1}\left(G, \otimes_{n=j+1}^k M_n\right), \, \, 1\leq j\leq k-1.$$

From the above explicit relations, we can view the depth $k$ $1$-cocycle as a mapping that satisfies the standard $1$-cocycle condition up to maps that decompose, in a certain way, into lower depth mappings. As with the classical cohomology, for the kind of groups we are considering, our cohomology is concentrated in orders $0$ and $1$. To show this, we need the following lemma.
\begin{lemma}\label{compl} Let $M_j$ ($1\le j\le k$) be right $G$-modules. Then, for each $[\sigma] \in Z_{(k)}^{n}(G, \otimes_{j=1}^k M_j)$, there exist $\sigma_{j, r} \in \pi_{(j)}^{-1}Z_{(j)}^1(G, \otimes_{\nu=1}^j M_\nu)$, and $\varrho_{j, r} \in
\pi_{(k-j)}^{-1}Z_{(k-j)}^n(G, \otimes_{\nu=j+1}^k M_\nu)$ such that
\begin{equation}\label{Znlem}d^n \sigma= \sum_{j=1}^{k-1} \sum_r  \sigma_{j, r} \cup \varrho_{j, r}.
\end{equation}
\end{lemma}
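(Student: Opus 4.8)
The plan is to prove Lemma \ref{compl} by downward induction on $j$ combined with the defining relation \eqref{Zn} of $Z_{(k)}^n(G, \otimes_{j=1}^k M_j)$. By definition, for $[\sigma] \in Z_{(k)}^n$ we already know that
$$d^n \sigma = \sum_{j=1}^{k-1} \sum_r \mu(\sigma_{j,r} \otimes \varrho_{j,r})$$
for certain $\sigma_{j,r} \in \pi_{(j)}^{-1}Z_{(j)}^1(G, \otimes_{\nu=1}^j M_\nu)$ and certain \emph{cochains} $\varrho_{j,r} \in C^n(G, \otimes_{\nu=j+1}^k M_\nu)$. The content of the lemma is that one may upgrade each $\varrho_{j,r}$ from an arbitrary $n$-cochain to an element of $\pi_{(k-j)}^{-1}Z_{(k-j)}^n(G, \otimes_{\nu=j+1}^k M_\nu)$, i.e. to a genuine depth-$(k-j)$ $n$-cocycle. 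So the real work is a "bootstrapping" argument showing that the decomposition \eqref{Zn} can be refined so that its tail factors are themselves cocycles.

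First I would fix the innermost term $j = k-1$: here $\otimes_{\nu=j+1}^k M_\nu = M_k$, the depth is $k-j = 1$, and $\pi_{(1)}$ is the identity, so I need $\varrho_{k-1,r} \in Z^1_{(1)}(G, M_k) = Z^n(G, M_k)$, i.e. $d^n \varrho_{k-1,r} = 0$. To get this, apply $d^{n+1}$ to \eqref{Zn} and use $d^{n+1}d^n \sigma = 0$ together with the Leibniz rule for cup products (\cite{Br}, p.~111): the terms where $d$ hits the $\sigma_{j,r}$ factor land in lower $L_{(j)}$-spaces (since $d^1\sigma_{j,r} \in \mu(L_{(j)}^2)$ by well-definedness of $d_{(j)}^1$, Lemma \ref{comm}), and comparing the "degree profiles" of the resulting cup products forces $\sigma_{j,r} \cup d^n \varrho_{j,r} = 0$ modulo the relevant $\mu(L)$-subspaces. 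For $j = k-1$ there is nothing below to absorb into, so this directly yields $d^n\varrho_{k-1,r} = 0$. Then I would iterate downward: having arranged that $\varrho_{j',r}$ is a depth-$(k-j')$ cocycle for all $j' > j$, the relation $d^{n+1}d^n\sigma = 0$ expanded via Leibniz again shows that $\sum_r \sigma_{j,r}\cup d^n\varrho_{j,r}$, after pushing the contributions of $d^1\sigma_{j',r}$ ($j' \le j$) into $\mu(L_{(k)}^{n+1})$, must vanish; since the $\sigma_{j,r}$ may be taken linearly independent (or one argues factor-by-factor on a decomposition of the tensor factor), this gives $d^n \varrho_{j,r} \in \mu(L_{(k-j)}^{n+1})$, which is exactly the condition for $[\varrho_{j,r}] \in Z_{(k-j)}^n(G, \otimes_{\nu=j+1}^k M_\nu)$ after applying $\pi_{(k-j)}$. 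Tracking the bookkeeping carefully, \eqref{Zn} then becomes \eqref{Znlem} with $\mu$ replaced by $\cup$ on representatives.

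The main obstacle I anticipate is the careful matching of the ambient quotient spaces: the factors $\sigma_{j,r}$ live in $\pi_{(j)}^{-1}Z_{(j)}^1$, which is a subspace of $C^1$ (not of a quotient), so the cup products $\sigma_{j,r}\cup\varrho_{j,r}$ must be interpreted on the level of cochains and then projected to $C^n(G,\otimes_{j=1}^k M_j)/\mu(L_{(k)}^n)$, and I must verify that the various "absorption" steps genuinely land in $\mu(L_{(k)}^{n+1})$ rather than merely in $C^{n+1}$. This is where Lemma \ref{comm} (that $d^m \mu(L_{(k)}^m) \subset \mu(L_{(k)}^{m+1})$) and the inductive hypothesis that $d_{(j)}^1$ is well-defined are both essential, and where the associativity/Leibniz identities for cup products need to be applied with some care about signs and about which $G$-action is acting on which tensor factor. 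A secondary technical point is justifying that one may choose the $\sigma_{j,r}$ to be "independent enough" to conclude $d^n\varrho_{j,r} \in \mu(L_{(k-j)}^{n+1})$ from the vanishing of the sum; this can be handled either by a basis argument on the finite-dimensional pieces or, more robustly, by organizing the whole decomposition according to the grading of $L_{(k)}^\bullet$ by the index $j$ and extracting the graded components.
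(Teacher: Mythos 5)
Your plan is essentially the paper's proof: apply $d^{n+1}$ to the defining relation \eqref{Zn}, expand with the Leibniz rule for cup products, push the $d^1\sigma_{j,r}$ contributions into lower-depth pieces, and use linear independence of the $\sigma_{j,r}$ (chosen as a basis of $\bigoplus_j\pi_{(j)}^{-1}Z^1_{(j)}$) to extract, bucket by bucket, that $d^n\varrho_{j,r}$ has the form certifying $\varrho_{j,r}\in\pi_{(k-j)}^{-1}Z^n_{(k-j)}$. The organizational difference (your downward induction on $j$ versus the paper's induction on $k$) is cosmetic, except at one point where the paper's induction on $k$ does real work that your sketch elides: you justify the absorption of the $d^1\sigma_{j,r}\cup\varrho_{j,r}$ terms only by $d^1\sigma_{j,r}\in\mu(L^2_{(j)})$, which writes $d^1\sigma_{j,r}=\sum_{\nu,\ell}\sigma_{\nu,\ell}\cup\psi^{j,r}_{\nu,\ell}$ with the second factors $\psi^{j,r}_{\nu,\ell}$ merely $1$-cochains. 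After regrouping, the identity you obtain is $d^n\varrho_{\nu,\ell}=\sum_{j>\nu,r}\psi^{j,r}_{\nu,\ell}\cup\varrho_{j,r}$, and for this to place $\varrho_{\nu,\ell}$ in $\pi_{(k-\nu)}^{-1}Z^n_{(k-\nu)}$ you need the \emph{first} factors $\psi^{j,r}_{\nu,\ell}$ to lie in $\pi_{(j-\nu)}^{-1}Z^1_{(j-\nu)}(G,\otimes_{m=\nu+1}^{j}M_m)$, not just in $C^1$. That upgrade is exactly the statement of the lemma itself at depth $j<k$ with $n=1$, which is why the paper runs an induction on $k$ and invokes the inductive hypothesis at that step; well-definedness of $d^1_{(j)}$ (Lemma \ref{comm}) alone does not give it. With that one repair your argument closes, and the "degree-profile" comparison you describe becomes the exact coefficient extraction the paper performs via linear independence of the $\sigma_{j,r}$ as maps on $G$.
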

\begin{proof} We proceed by induction on $k$. Let $k=1$. Since $Z^n_{(1)}=Z^n$, we have $d^n\sigma=0$ and the assertion is trivial. Suppose it holds for depths $<k$. We show that the claim holds for depth $k$.

Fix a $\mathbb C$-basis $\{\sigma_{j, r}\}_{j, r}$ of $\bigoplus_{j=1}^{k}\pi_{(j)}^{-1}Z_{(j)}^1(G, \otimes_{\nu=1}^j M_{\nu})$. Let $[\sigma] \in Z_{(k)}^n(G, \otimes_{j=1}^k M_j)$. By \eqref{Zn}, there exist $\varrho_{j, r} \in
C^n(G, \otimes_{\nu=j+1}^{k} M_{\nu})$ such that $d^n \sigma= \sum_{{\substack{j=1 \\ r}}}^{k-1}  \sigma_{j, r} \cup \varrho_{j, r}$, where $r$ ranges over the indices of the basis elements $\sigma_{j, r}$. By the Leibniz rule we have
\begin{equation}\label{Leib}0=d^{n+1}(d^n \sigma)=\sum_{{\substack{j=1 \\ r}}}^{k-1} \left( d^1 \sigma_{j, r} \cup \varrho_{j, r} \right) -\sum_{{\substack{j=1 \\ r}}}^{k-1} \left( \sigma_{j, r} \cup d^n \varrho_{j, r} \right).
\end{equation}
The inductive hypothesis implies that there exist $\psi^{j, r}_{\nu, \ell} \in \pi_{(j-\nu)}^{-1}Z_{(j-\nu)}^1(G, \otimes_{m=\nu+1}^j M_m)$ such that the first double sum becomes
\begin{align*}
\sum_{{\substack{j=1 \\ r}}}^{k-1} \left ( \sum_{{\substack{\nu=1 \\ \ell}}}^{j-1} \sigma_{\nu, \ell} \cup \psi^{j, r}_{\nu, \ell} \right ) \cup \varrho_{j, r}
=\sum_{{\substack{\nu=1\\ \ell }}}^{k-2} \sum_{{\substack{j=\nu+1 \\ r}}}^{k-1} \sigma_{\nu, \ell} \cup \psi^{j, r}_{\nu, \ell} \cup \varrho_{j, r}
=\sum_{{\substack{\nu=1 \\ \ell}}}^{k- 1} \sigma_{\nu, \ell} \cup \left ( \sum_{{\substack{j=1 \\ r}}}^{k-\nu-1}   \psi^{j+\nu, r}_{\nu, \ell} \cup \varrho_{j+\nu, r} \right ).
\end{align*}
 Note that the inner sum vanishes for $\nu=k-1$.  Applying to \eqref{Leib}, we get
\begin{equation}\label{linind}0=\sum_{{\substack{j=1 \\ r}}}^{k-1} \sigma_{j, r} \cup \left (\sum_{{\substack{\nu=1 \\ \ell}}}^{k-j-1} \psi^{j+\nu, \ell}_{j, r}  \cup \varrho_{\nu+j, \ell}  -d^n \varrho_{j, r} \right ).
\end{equation}
If $\phi_{j, r}$ denotes the term inside the parentheses in \eqref{linind}, we deduce that, for all $\gamma_1, \gamma_2 \in G$, $\sum_{j,  r} \sigma_{j, r}(\gamma_2) \circ \gamma_1 \otimes \phi_{j, r}(\gamma_1)=0$, and thus
 $\sum_{j,  r} \sigma_{j, r}(\gamma_2) \otimes \phi_{j, r}(\gamma_1) \circ \gamma_1^{-1}=0$.
 Since $\sigma_{j, r}$ are linearly independent maps of $G$, we deduce that $\phi_{j, r}=0$ for all $j, r$ and hence
$$d^n \varrho_{j, r}=\sum_{{\substack{\nu=1 \\ \ell}}}^{k-j-1} \psi^{j+\nu, \ell}_{j, r} \cup \varrho_{\nu+j, \ell}.$$ Thus $\varrho_{j, r} \in \pi_{(k-j)}^{-1}Z_{(k-j)}^n(G, \otimes_{\nu=j+1}^{k} M_\nu)$, proving \eqref{Znlem}.
\end{proof}
We can now prove the following proposition.
\begin{proposition}\label{cdim1} Let $\Gamma\in\{\Gamma_0(N),\langle \Gamma_0(N),W_N \rangle\}$ and let $\{M_j\}_{j=1}^k$ be $\mathbb C$-vector spaces on which $\Gamma$ acts. For $n \ge 2$, we have
$$H^n_{(k)}\left(\Gamma; \otimes_{j=1}^k M_j\right)=0.$$
\end{proposition}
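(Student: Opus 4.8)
The plan is to run an induction on the depth $k$, using \Cref{compl} as the main tool and, as the only input of genuine cohomological content, the vanishing
\[
H^n(\Gamma,V)=0\qquad(n\ge 2)
\]
for every $\C$-vector space $V$ equipped with a $\Gamma$-action. This holds because each of the two groups $\Gamma$ under consideration contains a free subgroup $\Gamma'$ of finite index — for instance a torsion-free congruence subgroup inside $\Gamma_0(N)$, whose quotient of $\mathbb H$ is an open Riemann surface, so that $\Gamma'$ is free and of cohomological dimension at most $1$ — and because restriction followed by corestriction is multiplication by $[\Gamma:\Gamma']$, invertible in $\C$, so that $H^n(\Gamma,V)$ injects into $H^n(\Gamma',V)=0$. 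In particular this applies to every tensor sub-block $\otimes_{\ell=a}^{b}M_\ell$.

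For $k=1$ one has $L^m_{(1)}=0$, hence $d^m_{(1)}=d^m$ and $H^n_{(1)}(\Gamma,M_1)=H^n(\Gamma,M_1)=0$ for $n\ge 2$. For the inductive step, suppose the assertion holds in all depths $k'<k$, and let $[\sigma]\in Z^n_{(k)}(\Gamma,\otimes_{j=1}^kM_j)$ with $n\ge 2$. By \Cref{compl} we may write $d^n\sigma=\sum_{j=1}^{k-1}\sum_r\sigma_{j,r}\cup\varrho_{j,r}$, with $\{\sigma_{j,r}\}_r$ a fixed $\C$-basis of $\pi_{(j)}^{-1}Z^1_{(j)}(\Gamma,\otimes_{\ell=1}^jM_\ell)$ and $\varrho_{j,r}\in\pi_{(k-j)}^{-1}Z^n_{(k-j)}(\Gamma,\otimes_{\ell=j+1}^kM_\ell)$. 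Since $1\le k-j\le k-1$, the inductive hypothesis makes each $[\varrho_{j,r}]$ a coboundary, so there is $\psi_{j,r}\in C^{n-1}(\Gamma,\otimes_{\ell=j+1}^kM_\ell)$ with $\varrho_{j,r}-d^{n-1}\psi_{j,r}\in\mu(L^n_{(k-j)})$. Replacing $\sigma$ by $\sigma+\sum_{j,r}\sigma_{j,r}\cup\psi_{j,r}$ changes neither the class (the added terms lie in $\mu(L^n_{(k)})$) nor the fact that its coboundary lands in $\mu(L^{n+1}_{(k)})$; the point of the correction is that, expanding $d^n$ of the corrected cochain via the Leibniz rule for the cup product and using $d^1\sigma_{j,r}\in\mu(L^2_{(j)})$, \Cref{comm}, and the inductive hypothesis at lower depths and all orders $\ge 2$, one shows this coboundary is in fact of the form $d^n(\mu(\eta))$ for some $\eta\in L^n_{(k)}$. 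Subtracting $\mu(\eta)$ produces a cochain in the class of $\sigma$ with vanishing coboundary, i.e. a genuine $n$-cocycle of $\Gamma$ valued in $\otimes_{j=1}^kM_j$; by the first paragraph it is a genuine coboundary, whence $[\sigma]=0$ in $H^n_{(k)}(\Gamma,\otimes_{j=1}^kM_j)$. (Equivalently and more structurally: the short exact sequence $0\to\mu(L^\bullet_{(k)})\to C^\bullet(\Gamma,\otimes_{j=1}^kM_j)\to C^\bullet_{(k)}\to 0$ — exact by \Cref{comm} — together with $H^{\ge 2}(\Gamma,-)=0$ identifies $H^n_{(k)}$ with $H^{n+1}(\mu(L^\bullet_{(k)}))$ for $n\ge 2$, so everything reduces to $H^m(\mu(L^\bullet_{(k)}))=0$ for $m\ge 3$, which one approaches by filtering $\mu(L^\bullet_{(k)})$ by the index $j$: the associated graded pieces are built from ordinary bar complexes $C^{\bullet-1}(\Gamma,\otimes_{\ell=p+1}^kM_\ell)$, whose cohomology in degree $m$ is $H^{m-1}(\Gamma,\otimes_{\ell=p+1}^kM_\ell)=0$ when $m-1\ge 2$.)

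The delicate step — and where I expect essentially all the work to sit — is exactly this passage: showing that after the correction the residual coboundary is $d^n$ of something inside $\mu(L^n_{(k)})$, or, in the structural formulation, computing the graded pieces of $\mu(L^\bullet_{(k)})$. The subtleties are that the primitives $\psi_{j,r}$ are determined only modulo $\mu(L^n_{(k-j)})$, that $d^1$ does not kill a representative of a depth-$j$ cocycle (it only lands in $\mu(L^2_{(j)})$, so each round of corrections spawns fresh lower-depth cup products), and — for the filtration argument — that the cup-product maps $\pi_{(j)}^{-1}Z^1_{(j)}\otimes C^{\bullet-1}(\otimes_{\ell=j+1}^kM_\ell)\to C^\bullet(\Gamma,\otimes_\ell M_\ell)$ assembling $L^\bullet_{(k)}$ need not be injective (a "depth $j$ $\times$ depth $k-j$" contribution can coincide with a "depth $j'$ $\times$ depth $k-j'$" one), so the associated graded of the filtration must be identified with care. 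Once this is in place the conclusion is immediate, since all of the genuinely cohomological content is the classical vanishing of $H^{\ge 2}(\Gamma,-)$ recorded in the first paragraph.
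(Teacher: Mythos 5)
Your overall strategy is the paper's: induct on the depth $k$, invoke the classical vanishing $H^{\ge 2}(\Gamma,V)=0$ (Lemma 7.2 of \cite{DO}) for the base case, use \Cref{compl} to write $d^n\sigma=\sum_{j,r}\sigma_{j,r}\cup\varrho_{j,r}$ with $\varrho_{j,r}\in\pi_{(k-j)}^{-1}Z^n_{(k-j)}$, apply the inductive hypothesis to the $\varrho_{j,r}$, and correct $\sigma$ by $\sum_{j,r}\sigma_{j,r}\cup\chi_{j,r}$ (your $\psi_{j,r}$). Up to that point the two arguments coincide, and your observations that the correction terms lie in $\mu(L^n_{(k)})$ and that \Cref{comm} keeps everything consistent are right.

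The problem is that you stop exactly where the proof has to be closed. You assert that after the correction the residual coboundary equals $d^n(\mu(\eta))$ for some $\eta\in L^n_{(k)}$, you yourself flag this as ``where essentially all the work sits,'' and neither of your two proposed routes (the direct one, or the filtration of $\mu(L^\bullet_{(k)})$ by the index $j$, whose graded pieces you admit you cannot identify because the assembling maps need not be injective) is carried out. The paper does not prove that claim directly; it uses a different device. After the correction, the Leibniz computation together with $d^1\sigma_{j,r}\in\mu(L^2_{(j)})$ shows that $d^n\bigl(\sigma+\sum_{j,r}\sigma_{j,r}\cup\chi_{j,r}\bigr)$ lies in $\sum_{\nu=1}^{k-2}\pi_{(\nu)}^{-1}Z^1_{(\nu)}\cup C^n$ --- crucially, only left factors of depth $\le k-2$ survive. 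One then \emph{regroups the last two modules}, setting $\mathcal M_\ell:=M_\ell$ for $\ell\le k-2$ and $\mathcal M_{k-1}:=M_{k-1}\otimes M_k$, so that the corrected cochain becomes an element of $\pi_{(k-1)}^{-1}Z^n_{(k-1)}\bigl(\otimes_{j=1}^{k-1}\mathcal M_j\bigr)$; a second application of the inductive hypothesis, now at depth $k-1$, exhibits it as a depth-$(k-1)$ coboundary, and unravelling the regrouping places $[\sigma]$ in $B^n_{(k)}$ via \eqref{Bn}. Without this regrouping step (or a genuinely worked-out substitute for your ``$d^n(\mu(\eta))$'' claim), the argument is incomplete: identifying where the hard step is does not discharge it.
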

\begin{proof}
We prove the proposition using induction on $k$. For $k=1$, the claim becomes Lemma 7.2 of \cite{DO} because $H^n_{(1)}(\Gamma; M_1)=H^n(\Gamma, M_1)$. Let $k \ge 2$ and assume that the claim holds for depths $<k$.
In the sequel, since we work with a fixed group $\Gamma$, to simplify notation, we omit $\Gamma$ from the notation, e.g., we write  $Z_{(k)}^{n}(\otimes_{j=1}^k M_j)$ instead of $Z_{(k)}^{n}(\Gamma, \otimes_{j=1}^k M_j)$.

Let $[\sigma] \in Z_{(k)}^n(\otimes_{j=1}^k M_j)$. By Lemma \ref{compl},
there exist $\sigma_{j, r} \in \pi_{(j)}^{-1}Z_{(j)}^1(\otimes_{\nu=1}^j M_\nu)$, $\varrho_{j, r} \in \pi_{(k-j)}^{-1}Z_{(k-j)}^n$ $(\otimes_{\nu=j+1}^k M_\nu)$ such that
\begin{equation}\label{Zn'}d^n \sigma= \sum_{{\substack{j=1 \\ r}}}^{k-1} \sigma_{j, r} \cup \varrho_{j, r}.
\end{equation}
The inductive hypothesis implies that $[\varrho_{j, r}] \in B_{(k-j)}^n(\otimes_{\nu=j+1}^k M_\nu)$, or, by \eqref{Bn},
\begin{equation}\label{Bn'} \varrho_{j, r}=d^{n-1}\chi_{j, r}+\sum_{{\substack{\nu=1 \\ s}}}^{k-j-1} \phi_{\nu, s} \cup \psi_{\nu, s},
\end{equation} for some
$\phi_{\nu, s} \in \pi_{(\nu)}^{-1}Z^1_{(\nu)}(\otimes_{m=j+1}^{j+\nu} M_m)$,  $\psi_{\nu, s}  \in C^{n-1} (\otimes_{m=j+\nu+1}^{k} M_m)$ and $\chi_{j, r} \in C^{n-1}(\otimes_{\nu=j+1}^{k} M_\nu)$.
Because of \eqref{Bn'} and the Leibniz rule, \eqref{Zn'} implies
\begin{equation*}%\label{Zn''}
d^n\left ( \sigma+\sum_{{\substack{j=1 \\ r }}}^{k-1}  \sigma_{j, r} \cup \chi_{j, r} \right )=\sum_{{\substack{j= 2  \\ r}}}^{k-1}  d^1\sigma_{j, r} \cup \chi_{j, r}+\sum_{{\substack{j=1 \\ r}}}^{k-2}  \sigma_{j, r} \cup \left ( \sum_{{\substack{\nu=1 \\ s}}}^{k-j-1} \phi_{\nu, s} \cup \psi_{\nu, s} \right ).
\end{equation*}
 The lower limit of the first sum in the right-hand side is $2$ because $d^1\sigma_{1, r}=0$ (since $\sigma_{1, r} \in Z_{(1)}^1=Z^1$) and the upper limit of the second sum is $k-2$ because the inner sum is $0$ if $j=k-1$.
By \eqref{Zn},
\begin{align}\label{d1}\sum_{{\substack{j= 2  \\ r}}}^{k-1}  &d^1\sigma_{j, r} \cup \chi_{j, r}  = \sum_{{\substack{j=1 \\ r}}}^{k-2}  d^1\sigma_{j+1, r} \cup \chi_{j+1, r} \\
 &\in \sum_{{\substack{j=1 \\ r}}}^{k-2} \sum_{{\substack{\nu=1 \\ s}}}^{j}  \pi_{(\nu)}^{-1}Z_{(\nu)}^1\left (\bigotimes_{\ell=1}^{\nu} M_\ell \right ) \cup C^{1}\left (\bigotimes_{\ell=\nu+1}^{j+1} M_\ell \right )
\cup C^{n-1}\left (\bigotimes_{\ell=j+2}^k M_\ell \right ) \nonumber\\
&\subset
\sum_{{\substack{j=1 \\ r}}}^{k-2}   \sum_{{\substack{\nu=1 \\ s}}}^{j}  \pi_{(\nu)}^{-1}Z_{(\nu)}^1\left (\bigotimes_{\ell=1}^{\nu} M_\ell \right )
\cup C^{n}\left (\bigotimes_{\ell=\nu+1}^k M_\ell \right ) \subset
\sum_{{\substack{\nu=1 \\ s}}}^{k-2} \pi_{(\nu)}^{-1}Z_{(\nu)}^1 \left (\bigotimes_{\ell=1}^{\nu} M_\ell \right )
\cup C^{n} \left (\bigotimes_{\ell=\nu+1}^k M_\ell \right ). \nonumber
\end{align}
We let $\mathcal{M}_{\ell}:=M_{\ell}$, for $\ell\in\{1, \dots, k-2\}$ and $\mathcal{M}_{k-1}:=M_{k-1} \otimes M_k$. Then, by \eqref{d1} and \eqref{Zn}, we deduce that
$\sigma+\sum_{{\substack{j=1}}}^{k-1} \sigma_{j, r} \cup \chi_{j, r}\in\pi_{(k-1)}^{-1}Z_{(k-1)}^n(\otimes_{j=1}^{k-1}  \mathcal{M}_j )$, which, by inductive hypothesis is $\pi_{(k-1)}^{-1}B_{(k-1)}^n(\otimes_{j=1}^{k-1}  \mathcal{M}_j  )$. Therefore, by \eqref{Bn}, for some $\psi \in C^{n-1}(\otimes_{j=1}^{k  -1 }  \mathcal{M}_j  )$,
$$\sigma+\sum_{{\substack{j=1 \\ r}}}^{k-1} \sigma_{j, r} \cup \chi_{j, r} \in d^{n-1}\psi+  \sum_{{\substack{j=1}}}^{k-2}  \pi_{(j)}^{-1}Z_{(j)}^1\left (\bigotimes_{\nu=1}^{j} \mathcal{M}_\nu  \right )
\cup C^{n-1}\left (\bigotimes_{\nu=j+1}^{k-1}  \mathcal{M}_\nu  \right ).$$
 Since the sum in the left-hand side belongs to $$\sum_{{\substack{j=1}}}^{k-2}  \pi_{(j)}^{-1}Z_{(j)}^1 \lp\bigotimes_{\nu=1}^{j} \mathcal{M}_\nu  \rp
\cup C^{n-1} \lp \bigotimes_{\nu=j+1}^{k  -1 } \mathcal{M}_{\nu} \rp+
\pi_{(k-1)}^{-1}Z_{(k-1)}^1 \lp\bigotimes_{\nu=1}^{k-1} M_\nu  \rp
\cup C^{n-1}(M_{k})
,$$ 
we deduce that
\begin{multline*}
\sigma \in d^{n-1}\psi+\sum_{{\substack{j=1}}}^{k-2} \pi_{(j)}^{-1}Z_{(j)}^1\left (\bigotimes_{\nu=1}^{j} \mathcal{M}_\nu \right )
\cup C^{n-1}\left (\bigotimes_{\nu=j+1}^{k-1} \mathcal{M}_{\nu} \right )+
\pi_{(k-1)}^{-1}Z_{(k-1)}^1 \lp\bigotimes_{\nu=1}^{k-1} M_\nu  \rp
\cup C^{n-1}(M_{k})
\\=d^{n-1}\psi+
\sum_{{\substack{j=1}}}^{k-1} \pi_{(j)}^{-1}Z_{(j)}^1\left (\bigotimes_{\nu=1}^{j} M_\nu \right )
\cup C^{n-1}\left (\bigotimes_{\nu=j+1}^{k} M_{\nu} \right ).  
\end{multline*}
Hence \eqref{Bn} implies that $[\sigma]  \in B_{(k)}^n(\otimes_{j=1}^k M_j)
$.
\end{proof}

We now apply the general construction of the previous section to the multiple Eichler integrals defined in \Cref{higherper}. This allows to analytically extend $r_{f_1, \dots, f_n}$ beyond $\mathbb H^-$ and this in a way that respects the action of $\Gamma$. In the notation of \Cref{coh}, let $G=\Gamma$, $M_j=\DD_{2-k_j}$, and let $\Gamma$ act on each $M_j$ by $|_{2-k_j, \chi_j}$. We recall that, under the identification \eqref{ident}, the diagonal action of $\Gamma$ on $\otimes_{j=1}^m \DD_{2-k_j}$ is denoted by $|_{2-k_1, \dots, 2-k_m}$ (see \eqref{|m}).
With this notation, we have the following:
\begin{theorem} \label{BasThgen}
	 Let %$[r_{f_1}]$, resp.
	$[r_{f_1, \dots, f_n}]$ be the image of
	$r_{f_1, \dots, f_n}$ under the projection $\pi_{(n)}$ in \eqref{natpr}. Then
	$$
		[r_{f_1, \dots, f_n}] \in Z^1_{(n)}\left(\Gamma; \otimes_{j=1}^n D^{\omega, \infty, \textup{exc}}_{2-k_j} \right).
	$$
\end{theorem}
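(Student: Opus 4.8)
The plan is to verify directly that $[r_{f_1,\dots,f_n}]$ lies in the kernel of $d^1_{(n)}$, i.e.\ that the class of $d^1 r_{f_1,\dots,f_n}$ in $C^2(\Gamma,\otimes_{j=1}^n D^{\omega,\infty,\mathrm{exc}}_{2-k_j})/\mu(L^2_{(n)})$ vanishes, by identifying $d^1 r_{f_1,\dots,f_n}$ with an element of $\mu(L^2_{(n)})$. This is precisely what \Cref{propfinn} provides: for $\gamma_1,\gamma_2\in\Gamma_0(N)$,
\begin{equation*}
	d^1 r_{f_1,\dots,f_n}(\gamma_1,\gamma_2)=r_{f_1,\dots,f_n}(\gamma_1)|_{2-k_1,\dots,2-k_n}\gamma_2+r_{f_1,\dots,f_n}(\gamma_2)-r_{f_1,\dots,f_n}(\gamma_1\gamma_2)=-\sum_{j=1}^{n-1}r_{f_1,\dots,f_j}(\gamma_1)|_{2-k_1,\dots,2-k_j}\gamma_2\cdot\sigma_j(\gamma_2),
\end{equation*}
and the right-hand side is, up to sign, the value on $(\gamma_1,\gamma_2)$ of the cup product $\sum_{j=1}^{n-1} r_{f_1,\dots,f_j}\cup\sigma_j$, with $r_{f_1,\dots,f_j}\in C^1(\Gamma,\otimes_{\ell=1}^j M_\ell)$ and $\sigma_j\in C^1(\Gamma,\otimes_{\ell=j+1}^n M_\ell)$. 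So the statement reduces to two things: (a) each factor $r_{f_1,\dots,f_j}$ defines a class in $\pi_{(j)}^{-1}Z^1_{(j)}(\Gamma,\otimes_{\ell=1}^j M_\ell)$ (so that the cup product actually lands in $\mu(L^2_{(n)})$), and (b) $r_{f_1,\dots,f_n}$ genuinely takes values in $\otimes_{j=1}^n D^{\omega,\infty,\mathrm{exc}}_{2-k_j}$ rather than merely in $\mathcal{O}(\mathbb H^-)$, so that the projection $\pi_{(n)}$ makes sense on it.

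For (a) I would argue by induction on $n$: the base case $n=1$ is \Cref{BasTh}(1), which says $r_f$ is a genuine $1$-cocycle with coefficients in $D^{\omega,\infty,\mathrm{exc}}_{2-k}$, hence lies in $Z^1_{(1)}=Z^1$; and the inductive step is exactly the content of this theorem at depth $j<n$, so the statement is self-propagating. Concretely, assuming $[r_{f_1,\dots,f_j}]\in Z^1_{(j)}(\Gamma,\otimes_{\ell=1}^j D^{\omega,\infty,\mathrm{exc}}_{2-k_\ell})$ for all $j<n$, the identity above exhibits $d^1 r_{f_1,\dots,f_n}$ as an element of $\mu(L^2_{(n)})$ with each first slot drawn from $\pi_{(j)}^{-1}Z^1_{(j)}$ and each second slot $\sigma_j$ an arbitrary cochain in $C^1(\Gamma,\otimes_{\ell=j+1}^n M_\ell)$ — which is all \eqref{Z1} demands. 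One small point to check is that $\sigma_j$, whose explicit shape is the alternating sum of products of $r_{I_m}(\gamma^{-1})|_{2-I_m}\gamma$ in \Cref{propfinn}, does take values in $\otimes_{\ell=j+1}^n D^{\omega,\infty,\mathrm{exc}}_{2-k_\ell}$; but each factor $r_{I_m}(\gamma^{-1})$ lies in the appropriate tensor power of $D^{\omega,\infty,\mathrm{exc}}$ by the inductive hypothesis, the action $|_{2-I_m}\gamma$ preserves this space by Proposition 1.14 of \cite{BCD}, and the space is closed under the multiplication map \eqref{ident}, so this is automatic.

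The main obstacle — and the one genuinely new ingredient beyond \Cref{propfinn} and \Cref{BasTh} — is (b): showing that $r_{f_1,\dots,f_n}(\gamma)$, a priori only a holomorphic function on $\mathbb H^-$, extends to an element of $\otimes_{j=1}^n D^{\omega,\infty,\mathrm{exc}}_{2-k_j}$, i.e.\ that $\mathrm{prj}$ of it is holomorphic in some excised neighbourhood of $\mathbb H^-\cup\mathbb P^1(\mathbb R)$, smooth up to $\mathbb H^-\cup\mathbb P^1(\mathbb R)$, and has the asymptotic expansions \eqref{asym} at the singular cusps. I would establish this by induction on $n$ as well. At depth $1$ it is \Cref{BasTh}(1) together with the integral formula in the second Remark after it: $\mathrm{prj}_{2-k_1}(r_{f_1}(\gamma))(\tau)=\int_{-d/c}^{i\infty}i^{2-k_1}f_1(w)((i-\tau)/(w-\tau))^{2-k_1}dw$ is holomorphic in a $-d/c$-excised neighbourhood. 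For the inductive step, write $r_{f_1,\dots,f_n}(\gamma)(\tau)=\int_{\gamma^{-1}i\infty}^{i\infty} f_1(w_1)(w_1-\tau)^{k_1-2}\,r_{f_2,\dots,f_n}^{(w_1)}(\tau)\,dw_1$ where the inner factor is the depth-$(n-1)$ iterated integral from $w_1$ to $i\infty$ in the variables $w_2,\dots,w_n$; by the inductive hypothesis the inner factor, as a function of $\tau$ (with $w_1$ on the contour), extends to an excised-neighbourhood-holomorphic, smooth function with the required asymptotics, and the outer integral — whose integrand near the branch locus of $(w_1-\tau)^{k_1-2}$ is controlled exactly as in the depth-$1$ case and in Theorem 5.1 of \cite{BKM} — propagates these three properties, the only subtlety being a careful choice of the excised neighbourhood (intersecting finitely many $V_{\mathfrak a}(a_{\mathfrak a},\varepsilon_{\mathfrak a})$'s, one for each cusp appearing among $\gamma^{-1}i\infty$ and its images) so that all contours stay inside the excised region and the asymptotic expansions at each singular point can be read off by repeated application of the Binomial Theorem as in the proof of \Cref{MellLser}(2). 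Once (a) and (b) are in hand, the theorem follows immediately: $d^1_{(n)}[r_{f_1,\dots,f_n}]=0$ in $C^2/\mu(L^2_{(n)})$, so $[r_{f_1,\dots,f_n}]\in Z^1_{(n)}(\Gamma;\otimes_{j=1}^n D^{\omega,\infty,\mathrm{exc}}_{2-k_j})$.
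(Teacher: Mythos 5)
Your proposal is correct and takes essentially the same route as the paper: membership of $r_{f_1,\dots,f_n}(\gamma)$ in $\otimes_{j=1}^n D^{\omega,\infty,\mathrm{exc}}_{2-k_j}$ is established via the $\mathrm{prj}$-integral formula together with the branch-cut/excised-neighbourhood and asymptotic-expansion analysis, and the relation \eqref{Z1} is then read off from Theorem \ref{propfinn} with $\varrho_{j,s}=r_{f_1,\dots,f_j}$ and $\phi_{j,s}=\sigma_j$. Your explicit induction on depth for point (a) and your check that $\sigma_j$ takes values in $\otimes_{\ell=j+1}^n D^{\omega,\infty,\mathrm{exc}}_{2-k_\ell}$ are left implicit in the paper's proof, so you are if anything slightly more careful.
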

\begin{proof}
We work as in Lemma 2.5 of \cite{BCD} to see that $r_{f_1, \dots, f_n}( \gamma) \in \otimes_{j=1}^n \DD_{2-k_j}$. Specifically,
$$\text{prj}_{(2-k_1)+\dots+(2-k_n)}(r_{f_1, \dots, f_n}(\gamma))(\tau)=
 \int_{-\frac dc}^{i \infty} \int_{w_1}^{i \infty} \dots \int_{w_{n-1}}^{i \infty} \prod_{j=1}^n i^{2-k_j} f_j(w_j) \left ( \frac{i-\tau}{w_j-\tau} \right )^{2-k_j} dw_n \dots dw_1, $$
where $-\frac{d}{c}=\gamma^{-1}i \infty$. This is holomorphic in $\tau$ provided that the integration paths avoid the branch cuts defined by $(\frac{i-\tau}{w_j-\tau})^{2-k_j}$. Therefore our function is holomorphic in a $\frac{d}{c}$-excised neighbourhood $\Psi$ (see Figure \ref{fig-sp}).  Hence, it belongs to $\otimes_{j=1}^n D_{2-k_j}^{\omega, \text{exc}}$. In addition, it has an asymptotic expansion at the singular point $-\frac{d}{c}$ and hence, it belongs to $\otimes_{j=1}^n D_{2-k_j}^{\omega, \infty}.$
Furthermore, by Theorem \ref{propfinn}, $r_{f_1, \dots, f_n}$ satisfies \eqref{Z1}, with $\sigma(\gamma)=r_{f_1, \dots, f_n}(\gamma)$, $\varrho_{j, s}(\gamma)=r_{f_1, \dots, f_j}( \gamma)$, and $\phi_{j, s}=\sigma_j$ (for $1\le j\le n-1$), where $s$ ranges over a singleton.
\end{proof}
We finally apply our construction to the case of integral weight. Now $G=\Gamma_0(N)$, $k_j \in 2 \mathbb N$, $M_j=\mathbb C_{k_j-2}[z]$, and $G$ acts on $M_j$ by $|_{2-k_j}$. In this case, $\otimes_{j=1}^m \mathbb C_{2-k_j}[z]$ is identified, under \eqref{ident},  with $\mathbb C_{(2-k_1)+\dots+(2-k_n)}[z]$ and the diagonal action of $|_{2-k_1, \dots, 2-k_m}$ can be identified with $|_{(2-k_1)+\dots+(2-k_m)}$. Since $r_{f_1, \dots, f_n}(\gamma) \in \mathbb C_{(2-k_1)+\dots+(2-k_n)}[z]$, Theorem \ref{propfinn} implies the following:
\begin{proposition} \label{BasThZ}
	 Let $k_j \in 2 \mathbb N$ and $f_j \in S_{k_j}(\Gamma_0(N))$. For $j\in\{1, \dots n\}$, let $[r_{f_1, \dots, f_n}]$ be the image of
	$r_{f_1, \dots, f_n}$ under the natural projection $\pi_{(n)}$ in \eqref{natpr}.
		We have $$
		[r_{f_1, \dots, f_n}] \in Z^1_{(n)}\left(\Gamma; \otimes_{j=1}^n \mathbb C_{k_j-2}[z] \right).$$
\end{proposition}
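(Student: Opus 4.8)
The plan is to derive this as the polynomial specialisation of the argument proving Theorem~\ref{BasThgen}, the two new points being the identification of the coefficient module and the verification that every object produced by Theorem~\ref{propfinn} is polynomial-valued of the correct degree. I would proceed by induction on $n$.

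First I would dispose of the bookkeeping and the base case. Since $f_1,\dots,f_n$ are cusp forms, the iterated integral defining $r_{f_1,\dots,f_n}(\gamma)(\tau)$ converges absolutely and is independent of the paths of integration, and the variable $\tau$ enters only through the factors $(w_j-\tau)^{k_j-2}$; hence $r_{f_1,\dots,f_n}(\gamma)$ is a polynomial in $\tau$ of degree at most $\sum_{j=1}^n(k_j-2)=k_1+\dots+k_n-2n$, i.e.\ $r_{f_1,\dots,f_n}(\gamma)\in\mathbb{C}_{k_1+\dots+k_n-2n}[z]$, which under \eqref{ident} is $\otimes_{j=1}^n\mathbb{C}_{k_j-2}[z]$ with diagonal action $|_{2-k_1,\dots,2-k_n}$ corresponding to $|_{2n-k_1-\dots-k_n}$. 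The same applies to every initial-segment subcollection $(f_1,\dots,f_j)$, so $r_{f_1,\dots,f_j}\in C^1(\Gamma_0(N),\otimes_{\ell=1}^j\mathbb{C}_{k_\ell-2}[z])$. For $n=1$, the right-hand side of the relation in Theorem~\ref{propfinn} is empty, so that relation is exactly the $1$-cocycle condition \eqref{1cocy}, whence $[r_{f_1}]=r_{f_1}\in Z^1(\Gamma_0(N),\mathbb{C}_{k_1-2}[z])=Z^1_{(1)}(\Gamma_0(N),M_1)$.

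For the inductive step, suppose the statement holds in all depths smaller than $n$, so that $r_{f_1,\dots,f_j}\in\pi_{(j)}^{-1}Z^1_{(j)}(\Gamma_0(N),\otimes_{\ell=1}^j\mathbb{C}_{k_\ell-2}[z])$ for $1\le j\le n-1$. From the explicit formula in Theorem~\ref{propfinn}, $\sigma_j(\gamma)$ is a finite $\mathbb{Q}$-combination of products $\prod_m r_{I_m}(\gamma^{-1})|_{2-I_m}\gamma$ with the subvectors $I_1,\dots,I_\ell$ partitioning $(j+1,\dots,n)$; each factor is a polynomial of $\tau$-degree at most $\sum_{i\in I_m}(k_i-2)$ (the slash action preserves the relevant polynomial space), so $\sigma_j(\gamma)\in\mathbb{C}_{\sum_{i=j+1}^n(k_i-2)}[z]=\otimes_{\ell=j+1}^n\mathbb{C}_{k_\ell-2}[z]$ and $\sigma_j\in C^1(\Gamma_0(N),\otimes_{\ell=j+1}^n\mathbb{C}_{k_\ell-2}[z])$. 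Now Theorem~\ref{propfinn} gives, for all $\gamma_1,\gamma_2\in\Gamma_0(N)$,
\[
r_{f_1,\dots,f_n}(\gamma_1\gamma_2)-r_{f_1,\dots,f_n}(\gamma_1)|_{2-k_1,\dots,2-k_n}\gamma_2-r_{f_1,\dots,f_n}(\gamma_2)=\sum_{j=1}^{n-1}r_{f_1,\dots,f_j}(\gamma_1)|_{2-k_1,\dots,2-k_j}\gamma_2\cdot\sigma_j(\gamma_2),
\]
where, under \eqref{ident}, the $j$-th summand on the right is $\mu\bigl(r_{f_1,\dots,f_j}|_{2-k_1,\dots,2-k_j}\gamma_2\otimes\sigma_j(\gamma_2)\bigr)$. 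Taking $\sigma=r_{f_1,\dots,f_n}$, $\varrho_j=r_{f_1,\dots,f_j}\in\pi_{(j)}^{-1}Z^1_{(j)}$, $\phi_j=\sigma_j\in C^1$ (with the inner index ranging over a singleton), this is precisely relation \eqref{Z1} for $k=n$, $m=1$; equivalently $d^1r_{f_1,\dots,f_n}\in\mu(L^2_{(n)})$, so $[r_{f_1,\dots,f_n}]\in\ker(d^1_{(n)})=Z^1_{(n)}(\Gamma_0(N),\otimes_{j=1}^n\mathbb{C}_{k_j-2}[z])$, completing the induction.

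I do not expect a serious obstacle: the statement is a formal consequence of Theorem~\ref{propfinn} together with the inductive shape of Definition~\ref{mainDef}, and the analytic input (convergence, path-independence, polynomiality in $\tau$) is classical for integral-weight cusp forms. The only delicate bookkeeping is that $\otimes_{j=1}^n\mathbb{C}_{k_j-2}[z]$ merely surjects onto $\mathbb{C}_{k_1+\dots+k_n-2n}[z]$ under \eqref{ident}, so one must consistently carry out the cocycle check in the coefficient module as the paper has declared it and confirm that $\sigma_j(\gamma)$ really lands in $\otimes_{\ell=j+1}^n\mathbb{C}_{k_\ell-2}[z]$ (equivalently, has $\tau$-degree $\le\sum_{\ell=j+1}^n(k_\ell-2)$), so that the right-hand side genuinely lies in $\mu(L^2_{(n)})$; the matching of sign and ordering conventions between Theorem~\ref{propfinn} and \eqref{Z1} is handled verbatim as in the proof of Theorem~\ref{BasThgen} (take $g_2=\gamma_1$, $g_1=\gamma_2$).
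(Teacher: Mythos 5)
Your proposal is correct and follows essentially the same route as the paper, which simply observes that $r_{f_1,\dots,f_n}(\gamma)$ lands in $\mathbb{C}_{(k_1-2)+\dots+(k_n-2)}[z]$ (identified with $\otimes_{j=1}^n\mathbb{C}_{k_j-2}[z]$ via \eqref{ident}) and then invokes Theorem \ref{propfinn} to verify relation \eqref{Z1}, exactly as in the proof of Theorem \ref{BasThgen}. Your version merely makes explicit the induction on depth (needed so that $r_{f_1,\dots,f_j}\in\pi_{(j)}^{-1}Z^1_{(j)}$ for $j<n$) and the degree check on $\sigma_j(\gamma)$, both of which the paper leaves implicit.
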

The proof of Theorem \ref{lincomb} can be readily adjusted, in accordance to the $Z^1_{(n)}$-relation satisfied by $r_{f_1, \dots, f_n}$, to give the following:
\begin{proposition}\label{lincombgen} For each $\gamma_1, \gamma_2 \in \Gamma$, there exist $(k_1-2)+\dots+ (k_n-2)+1$ %non-trivial
$\mathbb Q$-
linear combinations of
$\Lambda_{f_1, \dots, f_n}((\gamma_1 \gamma_2)^{-1}i \infty, m_1, \dots m_n)$, $\Lambda_{f_1, \dots, f_n}(\gamma_1^{-1}i \infty, m_1, \dots m_n)$ and $\Lambda_{f_1, \dots f_n}(\gamma_2^{-1}i \infty, m_1, \dots m_n)$ for $1 \le m_j \le k_j-1$,
each of which equals a $\mathbb Q$-linear combination of products of the terms $\Lambda_{f_\ell, \dots, f_r}((\gamma_1 \gamma_2)^{-1}i \infty, m_\ell, \dots, m_r)$, $\Lambda_{f_\ell, \dots, f_r}(\gamma_1^{-1}i \infty, m_\ell, \dots, m_r)$ and $\Lambda_{f_\ell, \dots, f_r}(\gamma_2^{-1}i \infty, m_\ell, \dots, m_r)$, for $1 \le m_j \le k_j-1$.
\end{proposition}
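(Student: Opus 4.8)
The plan is to imitate the proof of Theorem~\ref{lincomb}, using the depth-$n$ cocycle relation of Theorem~\ref{propfinn} (equivalently, Proposition~\ref{BasThZ}) in place of \eqref{gen1coc}. In the integral-weight situation at hand, $r_{f_1,\dots,f_n}(\gamma)$ is a polynomial in $\tau$ of degree at most $D:=(k_1-2)+\dots+(k_n-2)$, and each summand $r_{f_1,\dots,f_j}(\gamma_1)|_{2-k_1,\dots,2-k_j}\gamma_2\cdot\sigma_j(\gamma_2)$ on the right-hand side of the relation is likewise a polynomial of degree at most $D$ (the first factor has degree $\le(k_1-2)+\dots+(k_j-2)$, the second degree $\le(k_{j+1}-2)+\dots+(k_n-2)$). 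Hence, evaluated at $\gamma_1,\gamma_2\in\Gamma$, the relation is an identity between polynomials in $\tau$ of degree at most $D$, and comparing its $D+1$ coefficients will yield the $D+1$ asserted $\mathbb Q$-linear combinations. The only new ingredient beyond depth~$2$ is the following bookkeeping lemma, needed to bring the maps $\sigma_j$ into the required shape.

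\medskip
\noindent\emph{Lemma.} \emph{For every block $(f_a,\dots,f_b)$ of consecutive entries and every $\gamma\in\Gamma$, the coefficients of the polynomial $r_{f_a,\dots,f_b}(\gamma^{-1})|_{2-k_a,\dots,2-k_b}\gamma$ are $\mathbb Q$-linear combinations of products $\prod_t\Lambda_{f_{a_t},\dots,f_{b_t}}$ evaluated at $\gamma^{-1}i\infty$, the product running over an ordered partition of $(a,\dots,b)$ into consecutive sub-blocks, with indices in the ranges supplied by Proposition~\ref{MellLser}(2).} I would prove this by induction on the length $p=b-a+1$. For $p=1$, the $1$-cocycle relation \eqref{1cocy} applied to $(\gamma^{-1},\gamma)$ and $r_{f_a}(I_2)=0$ give $r_{f_a}(\gamma^{-1})|_{2-k_a}\gamma=-r_{f_a}(\gamma)$, whose coefficients are $\mathbb Q$-linear combinations of the $\Lambda_{f_a}(\gamma^{-1}i\infty;m)$, $1\le m\le k_a-1$, by Proposition~\ref{MellLser}(2). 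For the inductive step, Theorem~\ref{propfinn} applied to the block $(f_a,\dots,f_b)$ with $(\gamma^{-1},\gamma)$, together with $r_{f_a,\dots,f_b}(I_2)=0$, gives
\[
r_{f_a,\dots,f_b}(\gamma^{-1})|_{2-k_a,\dots,2-k_b}\gamma=-r_{f_a,\dots,f_b}(\gamma)-\sum_{j} r_{f_a,\dots,f_{a+j-1}}(\gamma^{-1})|\gamma\cdot\sigma_j(\gamma),
\]
in which every factor $r_{f_a,\dots,f_{a+j-1}}(\gamma^{-1})|\gamma$, and every factor $r_{I_m}(\gamma^{-1})|\gamma$ occurring in $\sigma_j(\gamma)$ (which by Theorem~\ref{propfinn} is a $\mathbb Z$-linear combination of such products), involves a block of length strictly less than $p$; the inductive hypothesis applies to each of these, and Proposition~\ref{MellLser}(2) handles $r_{f_a,\dots,f_b}(\gamma)$.

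\medskip
With the lemma in hand, the argument proceeds as for Theorem~\ref{lincomb}. On the left-hand side of the relation in Theorem~\ref{propfinn}, Proposition~\ref{MellLser}(2) writes the coefficients of $r_{f_1,\dots,f_n}(\gamma_1\gamma_2)$, of $r_{f_1,\dots,f_n}(\gamma_2)$, and of $r_{f_1,\dots,f_n}(\gamma_1)|_{2-k_1,\dots,2-k_n}\gamma_2$ as $\mathbb Q$-linear combinations of the values $\Lambda_{f_1,\dots,f_n}$ at $(\gamma_1\gamma_2)^{-1}i\infty$, at $\gamma_2^{-1}i\infty$, and at $\gamma_1^{-1}i\infty$, respectively; for the last term one uses that the slash action by an element of $\SL_2(\mathbb Z)$ induces a rational linear change of the polynomial coefficients, leaving the evaluation point $\gamma_1^{-1}i\infty$ intact. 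On the right-hand side $\sum_{j=1}^{n-1}r_{f_1,\dots,f_j}(\gamma_1)|\gamma_2\cdot\sigma_j(\gamma_2)$, the first factor contributes coefficients that are $\mathbb Q$-linear combinations of $\Lambda_{f_1,\dots,f_j}$ at $\gamma_1^{-1}i\infty$ (again by Proposition~\ref{MellLser}(2) and the remark just made), while the lemma shows that $\sigma_j(\gamma_2)$ has coefficients that are $\mathbb Q$-linear combinations of products of $\Lambda_{f_\ell,\dots,f_r}$ at $\gamma_2^{-1}i\infty$. Multiplying the two polynomial factors and summing over $j$, each coefficient of the right-hand side becomes a $\mathbb Q$-linear combination of products of terms $\Lambda_{f_\ell,\dots,f_r}$ evaluated at $(\gamma_1\gamma_2)^{-1}i\infty$, $\gamma_1^{-1}i\infty$ or $\gamma_2^{-1}i\infty$, with indices in the ranges dictated by Proposition~\ref{MellLser}(2). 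Equating the coefficients of $\tau^s$ on the two sides for $s=0,\dots,D$ then gives the $(k_1-2)+\dots+(k_n-2)+1$ identities of the statement.

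\medskip
The main obstacle is the lemma: one must check that, after iterating the lower-depth cocycle relations to undo the appearance of the translated argument $\gamma^{-1}$, every $\Lambda$-value still enters only at one of the three cusps $(\gamma_1\gamma_2)^{-1}i\infty$, $\gamma_1^{-1}i\infty$, $\gamma_2^{-1}i\infty$, and that the index ranges produced by the binomial factors of Proposition~\ref{MellLser}(2) are respected throughout the induction. Once that is in place, the remaining work is exactly the coefficient comparison already carried out in the depth-$2$ case.
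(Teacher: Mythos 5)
Your proposal follows exactly the route the paper intends: the paper's own ``proof'' is the single remark that the argument for Theorem~\ref{lincomb} can be readily adjusted using the $Z^1_{(n)}$-relation of Theorem~\ref{propfinn}, which is precisely the coefficient comparison you carry out. The bookkeeping lemma you supply --- unwinding each factor $r_{I_m}(\gamma^{-1})|_{2-I_m}\gamma$ in $\sigma_j(\gamma_2)$ by induction on block length via the lower-depth relations applied to $(\gamma^{-1},\gamma)$ together with $r(I_2)=0$ --- is the natural way to make explicit the detail the paper leaves implicit, and it is correct (your caution about the index ranges is warranted, but that imprecision is already present in the paper's own statement, whose range $1\le m_j\le k_j-1$ does not match the wider ranges produced by Proposition~\ref{MellLser}(2) and used in Theorem~\ref{lincomb}).
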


\end{document}